\numberwithin{equation}{section}
\setlist[enumerate,1]{label={\rm(\roman*)}, ref={\rm\roman*}} 
\newlist{a-enumerate}{enumerate}{2}
\setlist[a-enumerate,1]{label={\rm(\alph*)}, ref={\rm\alph*}}
\newtheorem{thm}{Theorem}[section]
\newtheorem{prop}[thm]{Proposition}
\newtheorem{lem}[thm]{Lemma}
\newtheorem{cor}[thm]{Corollary}
\theoremstyle{definition}
\newtheorem{dfn}[thm]{Definition}
\theoremstyle{remark}
\newtheorem{rmk}[thm]{Remark}
\newcommand{\nc}{\newcommand}
\nc{\on}{\operatorname}
\nc{\fraka}{{\mathfrak a}} \nc{\bba}{{\mathbf a}}
\nc{\frakb}{{\mathfrak b}}
\nc{\frakc}{{\mathfrak c}}
\nc{\frakd}{{\mathfrak d}}
\nc{\frake}{{\mathfrak e}}
\nc{\frakf}{{\mathfrak f}}
\nc{\frakg}{{\mathfrak g}}
\nc{\frakh}{{\mathfrak h}}
\nc{\fraki}{{\mathfrak i}}
\nc{\frakj}{{\mathfrak j}}
\nc{\frakk}{{\mathfrak k}}
\nc{\frakl}{{\mathfrak l}}
\nc{\frakm}{{\mathfrak m}}
\nc{\frakn}{{\mathfrak n}}
\nc{\frako}{{\mathfrak o}}
\nc{\frakp}{{\mathfrak p}}
\nc{\frakq}{{\mathfrak q}}
\nc{\frakr}{{\mathfrak r}}
\nc{\fraks}{{\mathfrak s}}
\nc{\frakt}{{\mathfrak t}}
\nc{\fraku}{{\mathfrak u}}
\nc{\frakv}{{\mathfrak v}}
\nc{\frakw}{{\mathfrak w}}
\nc{\frakx}{{\mathfrak x}}
\nc{\fraky}{{\mathfrak y}}
\nc{\frakz}{{\mathfrak z}}
\nc{\frakA}{{\mathfrak A}}
\nc{\frakB}{{\mathfrak B}}
\nc{\frakC}{{\mathfrak C}}
\nc{\frakD}{{\mathfrak D}}
\nc{\frakE}{{\mathfrak E}}
\nc{\frakF}{{\mathfrak F}}
\nc{\frakG}{{\mathfrak G}}
\nc{\frakH}{{\mathfrak H}}
\nc{\frakI}{{\mathfrak I}}
\nc{\frakJ}{{\mathfrak J}}
\nc{\frakK}{{\mathfrak K}}
\nc{\frakL}{{\mathfrak L}}
\nc{\frakM}{{\mathfrak M}}
\nc{\frakN}{{\mathfrak N}}
\nc{\frakO}{{\mathfrak O}}
\nc{\frakP}{{\mathfrak P}}
\nc{\frakQ}{{\mathfrak Q}}
\nc{\frakR}{{\mathfrak R}}
\nc{\frakS}{{\mathfrak S}}
\nc{\frakT}{{\mathfrak T}}
\nc{\frakU}{{\mathfrak U}}
\nc{\frakV}{{\mathfrak V}}
\nc{\frakW}{{\mathfrak W}}
\nc{\frakX}{{\mathfrak X}}
\nc{\frakY}{{\mathfrak Y}}
\nc{\frakZ}{{\mathfrak Z}}
\nc{\bbA}{{\mathbb A}}
\nc{\bbB}{{\mathbb B}}
\nc{\bbC}{{\mathbb C}}
\nc{\bbD}{{\mathbb D}}
\nc{\bbE}{{\mathbb E}}
\nc{\bbF}{{\mathbb F}} \nc{\bbf}{{\mathbf f}}
\nc{\bbG}{{\mathbb G}}
\nc{\bbH}{{\mathbb H}}
\nc{\bbI}{{\mathbb I}}
\nc{\bbJ}{{\mathbb J}}
\nc{\bbK}{{\mathbb K}}
\nc{\bbL}{{\mathbb L}}
\nc{\bbM}{{\mathbb M}}
\nc{\bbN}{{\mathbb N}}
\nc{\bbO}{{\mathbb O}}
\nc{\bbP}{{\mathbb P}}
\nc{\bbQ}{{\mathbb Q}}
\nc{\bbR}{{\mathbb R}}
\nc{\bbS}{{\mathbb S}}
\nc{\bbT}{{\mathbb T}}
\nc{\bbU}{{\mathbb U}}
\nc{\bbV}{{\mathbb V}}
\nc{\bbW}{{\mathbb W}}
\nc{\bbX}{{\mathbb X}}
\nc{\bbY}{{\mathbb Y}}
\nc{\bbZ}{{\mathbb Z}}
\nc{\calA}{{\mathcal A}}
\nc{\calB}{{\mathcal B}}
\nc{\calC}{{\mathcal C}}
\nc{\calD}{{\mathcal D}}
\nc{\calE}{{\mathcal E}}
\nc{\calF}{{\mathcal F}}
\nc{\calG}{{\mathcal G}}
\nc{\calH}{{\mathcal H}}
\nc{\calI}{{\mathcal I}}
\nc{\calJ}{{\mathcal J}}
\nc{\calK}{{\mathcal K}}
\nc{\calL}{{\mathcal L}}
\nc{\calM}{{\mathcal M}}
\nc{\calN}{{\mathcal N}}
\nc{\calO}{{\mathcal O}}
\nc{\calP}{{\mathcal P}}
\nc{\calQ}{{\mathcal Q}}
\nc{\calR}{{\mathcal R}}
\nc{\calS}{{\mathcal S}}
\nc{\calT}{{\mathcal T}}
\nc{\calU}{{\mathcal U}}
\nc{\calV}{{\mathcal V}}
\nc{\calW}{{\mathcal W}}
\nc{\calX}{{\mathcal X}}
\nc{\calY}{{\mathcal Y}}
\nc{\calZ}{{\mathcal Z}}
\nc{\scrA}{{\mathscr A}}
\nc{\scrB}{{\mathscr B}}
\nc{\scrR}{{\mathscr R}}
\nc{\bnu}{{\bar{ \nu}}}
\nc{\olO}{\bar{\calO}}
\nc{\al}{{\alpha}} 
\nc{\be}{{\beta}}
\nc{\ga}{{\gamma}} \nc{\Ga}{{\Gamma}}
 \nc{\hGa}{\hat{\Gamma}}
\nc{\ve}{{\varepsilon}} 
\nc{\la}{{\lambda}} \nc{\La}{{\Lambda}}
\nc{\om}{\omega} \nc{\Om}{\Omega} 
\nc{\sig}{{\sigma}} \nc{\Sig}{{\Sigma}}
\nc{\tnb}{\psi_{\rm tame}}
\nc{\oM}{\overline{{M}}}
\nc{\op}{{\on{op}}}
\nc{\ad}{{\on{ad}}}
\nc{\alg}{{\on{alg}}}
\nc{\Ad}{{\on{Ad}}}
\nc{\Adm}{{\on{Adm}}}
\nc{\Aut}{{\on{Aut}}}
\nc{\Bun}{{\on{Bun}}}
\nc{\cha}{{\on{char}}}
\nc{\der}{{\on{der}}}
\nc{\Der}{{\on{Der}}}
\nc{\diag}{{\on{diag}}}
\nc{\End}{{\on{End}}}
\nc{\Fl}{{\calF\!\ell}}
\nc{\Tr}{{\on{Transp}}}
\nc{\TR}{{\calT\!\calR}}
\nc{\Gal}{{\on{Gal}}}
\nc{\Gr}{{\on{Gr}}}
\nc{\rH}{{\on{H}}}
\nc{\Hom}{{\on{Hom}}}
\nc{\IC}{{\on{IC}}}
\nc{\id}{{\on{id}}}
\nc{\Id}{{\on{Id}}}
\nc{\ind}{{\on{ind}}}
\nc{\Ind}{{\on{Ind}}}
\nc{\Lie}{{\on{Lie}}}
\nc{\Pic}{{\on{Pic}}}
\nc{\pr}{{\on{pr}}}
\nc{\Res}{{\on{Res}}}
\nc{\res}{{\on{res}}}
\nc{\Sat}{{\on{Sat}}}
\nc{\s}{{\on{sc}}}
\nc{\drv}{{\mathrm{der}}}
\nc{\sgn}{{\on{sgn}}}
\nc{\Spec}{{\on{Spec}}}
\nc{\Spf}{\on{Spf}} 
\nc{\Sph}{\on{Sph}}
\nc{\St}{{\on{St}}}
\nc{\tr}{{\on{tr}}}
\nc{\Mod}{{\mathrm{-Mod}}}
\nc{\Hilb}{{\on{Hilb}}} 
\nc{\Ext}{{\on{Ext}}} 
\nc{\vs}{{\on{Vec}}}
\nc{\ev}{{\on{ev}}}
\nc{\nO}{{\breve{\calO}}}
\nc{\tS}{{\tilde{S}}}
\nc{\spe}{{\on{sp}}}
\nc{\loc}{{\on{loc}}}
\nc{\nscrR}{{\mathscr{R}^{\on{nr}}}}
\nc{\GL}{{\on{GL}}}
\nc{\U}{{\on{U}}}
\nc{\Gl}{\on{Gl}} 
\nc{\GSp}{{\on{GSp}}}
\nc{\gl}{{\frakg\frakl}}
\nc{\SL}{{\on{SL}}} 
\nc{\SU}{{\on{SU}}} 
\nc{\SO}{{\on{SO}}}
\nc{\PGL}{{\on{PGL}}}
\nc{\Conv}{{\on{Conv}}}
\nc{\Rep}{{\on{Rep}}}
\nc{\Dom}{{\on{Dom}}}
\nc{\act}{{\on{act}}}
\nc{\nr}{{\on{nr}}}
\nc{\ctf}{{\on{ctf}}}
\nc{\str}{{\on{-}}} 
\nc{\os}{{\bar{s}}}
\nc{\oeta}{{\bar{\eta}}}
\nc{\hookto}{\hookrightarrow}
\nc{\longto}{\longrightarrow}
\nc{\leftto}{\leftarrow}
\nc{\onto}{\twoheadrightarrow}
\nc{\lonto}{\twoheadleftarrow}
\nc{\uG}{{\underline{G}}}
\nc{\uA}{{\underline{A}}}
\nc{\uS}{{\underline{S}}}
\nc{\uT}{{\underline{T}}}
\nc{\uM}{{\underline{M}}}
\nc{\uP}{{\underline{P}}}
\nc{\uB}{{\underline{B}}}
\nc{\uN}{{\underline{N}}}
\nc{\ucG}{{\underline{\calG}}}
\nc{\ucA}{{\underline{\calA}}}
\nc{\ucS}{{\underline{\calS}}}
\nc{\ucT}{{\underline{\calT}}}
\nc{\ucM}{{\underline{\calM}}}
\nc{\ucP}{{\underline{\calP}}}
\nc{\ucN}{{\underline{\calN}}}
\nc{\bF}{{\breve{F}}}
\nc{\oFl}{{\overline{\Fl}}} 
\nc{\bU}{{\overline{U}}}
\nc{\tGr}{{\tilde{\Gr}}}
\nc{\cGr}{\calG\! r}
\nc{\oGr}{\overline{\on{Gr}}} 
\nc{\ocGr}{\overline{\calG\! r}}
\nc{\co}{{\colon}}
\nc{\sch}[1]{(Sch/{#1})}
\nc{\HypLoc}[1]{HypLoc({#1})}
\nc{\ohtimes}{\stackrel{!}{\otimes}}
\nc{\boxtilde}{\widetilde{\boxtimes}}
\nc{\vstar}{{\varhexstar}}
\nc{\Div}{\on{Div}}
\nc{\bslash}{\backslash}
\nc{\algQl}{{\bar{\bbQ}_\ell}}
\nc{\sF}{{\bar{F}}}
\nc{\nF}{{\breve{F}}}
\nc{\nW}{{W^{\on{nr}}}}
\nc{\sk}{{\bar{k}}}
\nc{\cont}{\on{c}}
\nc{\Supp}{\on{Supp}}
\nc{\blt}{\bullet}  
\nc{\dom}{\on{dom}}
\nc{\scon}{{\on{sc}}} 
\nc{\Affine}{\on{Aff}} 
\nc{\nscrA}{\mathscr{A}^{\on{nr}}} 
\nc{\nfraka}{{\bbf^{\on{nr}}}}
\nc{\ran}{{\rangle}}
\nc{\lan}{{\langle}}
\nc{\bk}{{\bar{k}}}
\nc{\tF}{{\tilde{F}}}
\nc{\sS}{{\bar{S}}}
\nc{\LG}{{^\text{L}\hspace{-0.04cm}G}}
\nc{\LL}{{^\text{L}\hspace{-0.07cm}L}}
\nc{\pot}[1]{ [\hspace{-0,5mm}[ {#1} ]\hspace{-0,5mm}] }
\nc{\rpot}[1]{ (\hspace{-0,7mm}( {#1} )\hspace{-0,7mm}) }
\nc{\defined}{\hspace{0.1cm}\stackrel{\text{\tiny \rm def}}{=}\hspace{0.1cm}}
\renewcommand{\Im}{\on{Im}}
\nc{\aff}{\mathrm{aff}}
\nc{\red}{\mathrm{red}}
\nc{\sep}{\mathrm{sep}}
\nc{\Flag}{\on{Fl}}
\nc{\et}{\mathrm{\acute{e}t}}
\nc\lowsim{\vbox to 0pt{\vss\hbox{$\scriptstyle\sim$}\vskip-2pt}}
\nc{\longdash}{\,\rule[2.5pt]{12pt}{0.3pt}\,}
\title{Cellular pavings of fibers of convolution morphisms}
\author{Thomas J.~Haines}
\address{Department of Mathematics, University of Maryland, College Park, MD 20742-4015, USA}
\email{tjh@math.umd.edu}
\begin{document}


\maketitle

\begin{prelims}

\DisplayAbstractInEnglish

\bigskip

\DisplayKeyWords

\medskip

\DisplayMSCclass

\end{prelims}


\newpage

\setcounter{tocdepth}{1}

\tableofcontents


\section{Introduction and main results}

Let $G$ be a split connected reductive group over any field $k$. Let $W$ be the Iwahori--Weyl
group of $LG(k) = G(k(\!(t)\!))$, and for each $r$-tuple $w_\bullet = (w_1, \dots, w_r) \in W^r$ and choice of standard parahoric subgroup $\calP \subset LG(k)$, consider the convolution morphism 
$$
m_{w_\bullet, \calP}\colon X_\calP(w_\bullet) := X_\calP(w_1) \widetilde{\times} \cdots \widetilde{\times} X_\calP(w_r) \longrightarrow X_\calP(w_*)    
$$
defined on the twisted product of Schubert varieties $X_\calP(w_i) \subset \Flag_\calP$ (see Sections~\ref{Notation_sec} and~\ref{Review_sec}).
Such morphisms have long played an important role in the geometric Langlands program and in the study of the geometry of Schubert varieties.  For example, if 
$w_\bullet = (s_1,\dots, s_r)$ is a sequence of simple affine reflections, $w = s_1\cdots s_r$ is a reduced word, and $\calP$ is the standard Iwahori subgroup $\calB$, then $X_\calB(s_\bullet) \rightarrow X_\calB(w)$ is the Demazure resolution (of singularities) of $X_\calB(w)$.   If $\calP = L^+G$ is the positive loop group and $w_\bullet = \mu_\bullet = (\mu_1, \dots, \mu_r)$ is a tuple of cocharacters in $G$, the corresponding convolution morphism is used to define the convolution of $L^+G$-equivariant perverse sheaves on the affine Grassmannian $\Gr_G = LG/L^+G$, and hence it plays a key role in the geometric Satake correspondence.

Numerous applications stem from the study of the fibers of convolution morphisms, their dimensions and irreducible components, and possible pavings of them by affine spaces or related spaces.  This article will focus on pavings of fibers by affine spaces, or by closely related spaces. We recall that a variety $X$ is {\em paved by varieties in a class $\mathcal C$} provided that there exists a finite exhaustion by closed subvarieties $\emptyset = X_0 \subset X_1 \subset \cdots \subset X_l = X$ such that each locally closed difference $X_i - X_{i-1}$ for $1 \leq i \leq r$ is isomorphic to a member of the class $\mathcal C$.

The fact that the fibers of Demazure resolutions admit pavings by affine spaces was for a long time a folklore result, until a proof appeared in \cite{H05} and later more generally in \cite{dHL18} (see \cite[Theorem~2.5.2]{dHL18}).
This has been used in proving various parity vanishing and purity results in Kazhdan--Lusztig theory, see \cite{H05, dHL18}, and in the geometric Satake correspondence; see \cite{Ga01, MV07, Ri14}. The paving of certain fibers related to the affine Grassmannian for $\GL_n$ gives a different approach to paving by affines of some Springer--Spaltenstein varieties, which are certain partial Springer resolutions of the nilpotent cone for $\GL_n$; see \cite[Proposition~8.2 and what follows]{Hai06}  
and \cite{Sp}.

One could conjecture that all fibers of general convolution morphisms $X_\calP(w_\bullet) \rightarrow X_\calP(w_*)$ are paved by affine spaces.  In the special case of a sequence of minuscule cocharacters $w_\bullet = \mu_\bullet$ and the associated convolution morphism  $X_{L^+G}(\mu_\bullet) \to X_{L^+G}(|\mu_\bullet|)$ for the affine Grassmannian $\Gr_G = LG/L^+G$, this was proved in \cite[Corollary~1.2]{Hai06}.  In general for the affine Grassmannian, it is not known which fibers are paved by affine spaces (see \cite[Question 3.9]{Hai06}). The existence of an affine space paving of fibers of $m_{w_\bullet, \calP}$ in the general case seems to be an interesting open question---and the author is not aware of any counterexamples.  One can consider the analogous question of when fibers of {\em uncompactified} convolution morphisms $Y_\calP(w_\bullet) \rightarrow X_\calP(w_*)$ are paved by affine spaces.  This turns out to usually fail (for examples, see Remarks~\ref{s,s_eg} and~\ref{KLM_eg} below). However, a weaker result does always hold. (As was implicit in the above discussion, in the following statements, all fibers are the scheme-theoretic fibers endowed with their induced reduced subscheme structure.)

\begin{thm} \label{thmA}
Every fiber of a convolution morphism $Y_\calP(w_\bullet) \rightarrow X_\calP(w_*)$ is paved by finite products of copies of $\mathbb A^1$ and $\mathbb A^1 - \mathbb A^0$. 
\end{thm}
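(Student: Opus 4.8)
The plan is to reduce the statement to the case $r = 2$ and then to a fiberwise analysis over the base Schubert variety $X_\calP(w_*)$, stratified by Iwahori (or $\calP$-) orbits. For the inductive step, note that a fiber of $Y_\calP(w_1,\dots,w_r) \to X_\calP(w_*)$ fibers over a fiber of $Y_\calP(w_1,\dots,w_{r-1}) \to X_\calP((w_1\cdots w_{r-1})_*)$ with fibers that are themselves fibers of a two-term convolution morphism $Y_\calP(v, w_r) \to X_\calP(w_*)$ as $v$ ranges over the points of the first Schubert variety (more precisely, over the Iwahori orbits stratifying it). Since a class of varieties closed under products is automatically "closed under paving-by-fibration" in the obvious sense — if $Z \to Z'$ is a Zariski-locally-trivial fibration with fiber $F$, and both $Z'$ and $F$ are paved by products of $\bbA^1$ and $\bbA^1 - \bbA^0$, then so is $Z$ — it suffices to handle $r = 2$ and to check that the relevant maps are Zariski-locally trivial over each stratum. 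The latter follows from the fact that the $LG$-action (or its finite-dimensional jet-group quotients) on the affine flag variety trivializes orbits, together with the $T$-fixed-point/Bialynicki-Birula decomposition of each Schubert cell into affine spaces.

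For the base case $r = 2$, fix a point $y \in X_\calP(w_*)$; translating by an element of the appropriate jet group, we may assume $y$ lies in a fixed Iwahori orbit, and by further translation assume $y = w' \calP$ for some specific coset representative. The fiber of $Y_\calP(w_1, w_2) \to X_\calP(w_*)$ over $y$ is then identified with the set of $x \in X_\calP(w_1)$ such that $x^{-1} w' \calP \in X_\calP(w_2)$ — that is, with an intersection of $X_\calP(w_1)$ with a translate of (the image of) $X_\calP(w_2)^{-1}$ inside $\Flag_\calP$. I would intersect this further with the Iwahori-orbit stratification of $X_\calP(w_1) = \bigsqcup_{v \le w_1} \calB v \calP / \calP$: on each piece $\calB v \calP/\calP \cong \bbA^{\ell(v)}$ (an affine space with explicit coordinates coming from a product of root subgroups), the condition "$x^{-1} w' \calP \in X_\calP(w_2)$" becomes a condition on these coordinates. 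The key claim to establish is that each such stratum, cut out inside $\bbA^{\ell(v)}$ by this condition, is isomorphic to a product of copies of $\bbA^1$ and $\bbA^1 - \bbA^0$; this is where one uses that $X_\calP(w_2)$ is itself Iwahori-stable, so that the condition on a given coordinate line, holding the others fixed, is either "no condition" (giving an $\bbA^1$ factor), "avoid one point" (giving $\bbA^1 - \bbA^0$), or "impossible" (empty) or "forced to a point" — and that these conditions on the different coordinates decouple in an upper-triangular fashion when the root-subgroup coordinates are ordered compatibly with a reduced word. This decoupling/triangularity is essentially the same mechanism as in the Demazure-resolution paving of \cite{dHL18, H05}, now applied to a general $w_2$ rather than to a reduced-word Bott–Samelson tower.

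I expect the main obstacle to be precisely this last point: verifying that the incidence condition defining the fiber, when restricted to a single affine Schubert cell and expressed in root-subgroup coordinates, is "monomially triangular" in a way that forces each one-dimensional slice to be $\bbA^1$, $\bbA^1 - \bbA^0$, a point, or empty. The subtlety is that $w_2$ need not be the product of the $w_i$'s in any length-additive way, so the clean Bott–Samelson combinatorics is unavailable; one must instead argue using the closure relations and $T$-stability of $X_\calP(w_2)$ directly, perhaps via the characterization of $X_\calP(w_2)$ by vanishing of Plücker-type coordinates, or via an inductive "peeling off a simple reflection from $w_1$" argument that tracks how the incidence condition transforms under the $\bbP^1$-bundle structure $X_\calP(s v) \to X_\calP(v)$ associated to a simple reflection $s$ with $sv > v$. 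A secondary technical point, needed to make the reduction to a single orbit legitimate, is to confirm that the convolution morphism is equivariant for a group acting transitively on each Iwahori orbit of the base with the orbit map admitting a section, so that the fiber over $y$ is genuinely independent of $y$ within its stratum and the global paving can be assembled stratum-by-stratum.
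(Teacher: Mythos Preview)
Your inductive skeleton is roughly right---project onto the first $r-1$ coordinates, stratify the image by $\calB$-orbits, use local triviality over each stratum, and apply induction---but the description is garbled: the fiber of the $r$-term morphism does not ``fiber over a fiber'' of the $(r-1)$-term morphism. What actually happens is that projecting $p^{-1}(v\calP)$ onto the $(r-1)$-st coordinate has image $\Im(p') \cap vY_\calP(w_r^{-1})$, and over each $\calB$-orbit in this image the map is trivial with fiber a fiber of $p'$. So induction handles the fibers; the content lies in showing that each piece $Y_{\calB\calP}(y) \cap vY_\calP(w_r^{-1})$ of the image has a cellular paving. This is essentially your ``$r=2$ base case'' (via Lemma~\ref{key_fiber_lem}), and you correctly identify it as the crux.

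The real gap is there. Your plan to attack $Y_\calP(w_1) \cap vY_\calP(w_2^{-1})$ by writing each Schubert cell in root-group coordinates and hoping the incidence condition is ``monomially triangular'' is not a proof---and, as you note, for general $w_1, w_2$ there is no length-additivity to make the Bott--Samelson combinatorics work. The paper's key move, which you mention only tentatively at the end, is to \emph{commit} to peeling off one simple reflection at a time: first reduce to $\calP = \calB$ (using Lemma~\ref{saving_lem} to pass from $\calP$-orbits to $\calB$-orbits), then replace each $w_i$ by a reduced word $s_{i1}\cdots s_{in_i}$ and use the identification $Y_\calB(w_i) \cong Y_\calB(s_{i1}) \tilde\times \cdots \tilde\times Y_\calB(s_{in_i})$ to reduce to the case where every $w_i$ is a simple reflection. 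In that setting the induction step is completely explicit: the image of $\xi^\circ$ lands in $Y(v) \cup Y(vs_r)$, and the BN-pair relations (Lemma~\ref{strict-BN_pair_lem}) force each piece to be one of $\emptyset$, $\bbA^0$, $\bbA^1 - \bbA^0$, or $\bbA^1$. This is where the products of $\bbA^1$ and $\bbA^1 - \bbA^0$ actually come from; without the reduction to simple reflections, there is no mechanism producing them.
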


As a corollary, we obtain the following result on fibers of the usual convolution morphisms.

\begin{cor} \label{corB}
Every fiber of any convolution morphism $X_\calP(w_\bullet) \rightarrow X_\calP(w_*)$ is paved by finite products of copies of  $\mathbb A^1$ and $\mathbb A^1 - \mathbb A^0$.
\end{cor}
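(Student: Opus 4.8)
The plan is to deduce Corollary~\ref{corB} from Theorem~\ref{thmA} by stratifying the source $X_\calP(w_\bullet)$ of the convolution morphism into twisted products of Schubert cells, on each of which $m_{w_\bullet,\calP}$ restricts to an \emph{uncompactified} convolution morphism. Recall that each Schubert variety decomposes into Schubert cells, $X_\calP(w_i) = \bigsqcup_{v_i \le w_i} Y_\calP(v_i)$, and that this decomposition is stable under the left $\calP$-action used to form the twisted products. Since $X_\calP(w_\bullet) = X_\calP(w_1)\widetilde{\times}\cdots\widetilde{\times}X_\calP(w_r)$ is built as an iterated locally trivial fibration with fibers the $X_\calP(w_i)$, these decompositions globalize to a decomposition into locally closed subvarieties
$$
X_\calP(w_\bullet) \;=\; \bigsqcup_{v_i \le w_i\ (1\le i\le r)} Y_\calP(v_\bullet),
\qquad
Y_\calP(v_\bullet) := Y_\calP(v_1)\widetilde{\times}\cdots\widetilde{\times}Y_\calP(v_r),
$$
with $\overline{Y_\calP(v_\bullet)} = X_\calP(v_\bullet)$ (as one checks locally on each stage of the iterated base). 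Thus this is a stratification, and ordering the strata by total dimension, with ties broken arbitrarily, produces a filtration $\emptyset = Z_0 \subset Z_1 \subset \cdots \subset Z_N = X_\calP(w_\bullet)$ by closed subvarieties whose successive differences $Z_j - Z_{j-1}$ are precisely the strata $Y_\calP(v_\bullet)$.

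Next, fix $x \in X_\calP(w_*)$ and let $F := m_{w_\bullet,\calP}^{-1}(x)$, a closed subvariety of $X_\calP(w_\bullet)$ (taken with its reduced structure). Intersecting the filtration above with $F$ yields a filtration of $F$ by closed subvarieties whose successive differences are the reduced locally closed subvarieties $Y_\calP(v_\bullet) \cap F$. The key point is that, unwinding the definitions of the twisted product and of $m_{w_\bullet,\calP}$ recalled in Section~\ref{Review_sec}, the restriction of $m_{w_\bullet,\calP}$ to a stratum $Y_\calP(v_\bullet)$ coincides with the uncompactified convolution morphism attached to the tuple $v_\bullet$, followed by a closed embedding of Schubert varieties into $X_\calP(w_*)$. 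Hence each $Y_\calP(v_\bullet) \cap F$ is either empty or a fiber of an uncompactified convolution morphism, so by Theorem~\ref{thmA} (the empty case being trivial) it is paved by finite products of copies of $\mathbb A^1$ and $\mathbb A^1 - \mathbb A^0$.

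It then remains to assemble these pavings using the elementary transitivity of paving: if a variety admits a finite filtration by closed subvarieties each of whose successive (locally closed) differences is paved by a class $\mathcal C$, then the variety itself is paved by $\mathcal C$. One proves this by concatenating the given exhaustions of the successive differences, enlarging each of their closed members by its union with the preceding stage of the outer filtration, and checking that the locally closed differences are thereby unchanged. Applied to $F$, this gives exactly the paving demanded by Corollary~\ref{corB}.

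The only step carrying genuine content is the identification of $m_{w_\bullet,\calP}|_{Y_\calP(v_\bullet)}$ with the uncompactified convolution morphism for $v_\bullet$ --- that is, the compatibility of the cellular stratification of the source with the convolution morphism. Everything else is formal set-theory of filtrations; but it is precisely this compatibility that routes Corollary~\ref{corB} through Theorem~\ref{thmA}, and verifying it carefully requires unwinding the construction of the twisted product and of $m_{w_\bullet,\calP}$ from Section~\ref{Review_sec}, together with the $\calP$-equivariance of the Schubert cell decomposition of each factor.
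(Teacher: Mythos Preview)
Your proposal is correct and follows essentially the same approach as the paper: decompose $X_\calP(w_\bullet) = \bigsqcup_{v_i \leq w_i} Y_\calP(v_\bullet)$ into locally closed pieces (the paper's equation~(\ref{conv_cl_rel})), observe that $m_{w_\bullet,\calP}$ restricts on each piece to the uncompactified convolution morphism $p_{v_\bullet,\calP}$, and apply Theorem~\ref{thmA}. You have spelled out more of the formalities (the filtration construction, transitivity of paving) than the paper does, but the substance is identical.
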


The previous two results show that the fibers in question are similar to  {\em cellular $k$-schemes}, in the sense of \cite[Definition~3.1.5]{RS20}. We adopt a similar terminology and declare that they admit {\em cellular pavings}. (This reflects a technical point: Here, we do not prove the {\em stratification property}, the property that the closure of each stratum is a union of strata.) A weaker version of Corollary~\ref{corB} was stated without proof in \cite[Remark~2.5.4]{dHL18}. The proof is given here in Sections~\ref{P=B_sec},~\ref{thmA_sec}, and~\ref{corB_sec}. 

One situation where paving by affine spaces is known is given by the following result.

\begin{thm}\label{thmC} Suppose $w_\bullet = s_\bullet = (s_1, s_2, \dots, s_r)$ is a sequence of simple reflections with Demazure product $s_* = s_1 * s_2 * \cdots * s_r$.  Then the fibers of $X_\calB(s_\bullet) \rightarrow X_{\calB}(s_*)$ are paved by affine spaces.
\end{thm}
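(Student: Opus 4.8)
The plan is to induct on $r$, the number of simple reflections, using the fibral structure of the convolution morphism. Write $s'_\bullet = (s_1,\dots,s_{r-1})$ with Demazure product $s'_* = s_1 * \cdots * s_{r-1}$, so that $X_\calB(s'_\bullet)$ maps to $X_\calB(s'_*)$. There is a factorization: the last-coordinate projection $X_\calB(s_\bullet) \to X_\calB(s'_\bullet)$ is a $\mathbb P^1$-bundle (it is the twisted product with the one-step Schubert variety $X_\calB(s_r) \cong \mathbb P^1$), and the convolution morphism $m_{s_\bullet,\calB}$ factors through a morphism $X_\calB(s_\bullet) \to X_\calB(s'_*) \widetilde{\times} X_\calB(s_r)$, which is itself $m_{s'_\bullet,\calB} \widetilde{\times} \id$ followed by a convolution $X_\calB(s'_*) \widetilde{\times} X_\calB(s_r) \to X_\calB(s'_* * s_r) = X_\calB(s_*)$. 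The key classical input is that the one-step convolution $p\colon X_\calB(v)\,\widetilde{\times}\, X_\calB(s) \to X_\calB(v * s)$ has very simple fibers: over a point $x \in X_\calB(v*s)$, the fiber is either a single point or $\mathbb P^1$ or (in the non-proper strata) $\mathbb A^1$ — this is the standard Bott–Samelson/Demazure analysis, governed by whether $\ell(vs) = \ell(v) \pm 1$ relative to $x$'s position. More precisely, stratify $X_\calB(v*s)$ by the $\calB$-orbits $X_\calB(u)^\circ$ it contains; over each such stratum the one-step convolution is a Zariski-locally trivial fibration with fiber a point or $\mathbb A^1$ or $\mathbb P^1$.

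With this in hand, fix a point $x \in X_\calB(s_*)$ and consider its fiber $F = m_{s_\bullet,\calB}^{-1}(x)$. Composing the two maps above, $F$ fibers over $m_{s'_\bullet,\calB}^{-1}(p^{-1}(x))$-type data: concretely, $F$ maps to the fiber of the one-step convolution $X_\calB(s'_*)\widetilde{\times} X_\calB(s_r) \to X_\calB(s_*)$ over $x$, call it $Z_x$, and the fiber of $F \to Z_x$ over a point $(y, *) \in Z_x$ (with $y \in X_\calB(s'_*)$) is exactly $m_{s'_\bullet,\calB}^{-1}(y)$, a fiber of the $(r-1)$-fold convolution. Now $Z_x$ is a point, $\mathbb A^1$, or $\mathbb P^1$ by the previous paragraph; I would stratify $Z_x$ so that the projection $F \to Z_x$ restricts over each stratum to a Zariski-locally trivial fibration whose fiber is a fiber of $m_{s'_\bullet,\calB}$, and then refine: as $(y,*)$ ranges over a stratum of $Z_x$, the point $y$ ranges within a single $\calB$-orbit stratum of $X_\calB(s'_*)$ — here one uses that the $\calB$-action is transitive on strata and the one-step convolution $X_\calB(s'_*)\widetilde\times X_\calB(s_r)\to X_\calB(s_*)$ is $\calB$-equivariant — so by induction $m_{s'_\bullet,\calB}^{-1}(y)$ has an affine-space paving that can be taken uniformly (equivariantly) over the stratum. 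Assembling: $F$ is paved by pieces each of which is a Zariski-locally trivial fibration, with affine-space fibers, over an affine space or over $\mathbb A^1$-minus-a-point-type pieces of $Z_x$; since each stratum of $Z_x$ here is actually an affine space (a point, $\mathbb A^1$, or $\mathbb A^1$ together with the complementary point — but for the convolution of two Schubert varieties ending in a reflection the relevant fibers $Z_x$ are themselves $\{pt\}$, $\mathbb A^1$, or $\mathbb P^1 = \mathbb A^1 \sqcup \mathbb A^0$, all paved by affines), and a Zariski-locally trivial affine-space bundle over an affine space is an affine space, each piece of the resulting paving of $F$ is an affine space.

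The base case $r = 1$ is trivial ($X_\calB(s_1) \to X_\calB(s_1)$ is the identity, fibers are points). I would organize the induction so that at each stage I carry along the stronger statement that the paving can be chosen $\calB$-equivariantly compatible with the $\calB$-orbit stratification of the target, since that is what makes the "uniformly over a stratum" step legitimate. The main obstacle — and the place requiring the most care — is precisely this equivariance/local-triviality bookkeeping: one must check that the affine-space paving of the varying fiber $m_{s'_\bullet,\calB}^{-1}(y)$ produced by the inductive hypothesis genuinely spreads out to a paving of the total space over each stratum by Zariski-locally trivial affine-space bundles (not merely that each fiber individually is paved by affines). This is where the distinction between the Iwahori case and the general parahoric case matters, and why Theorem~\ref{thmC} is restricted to $\calP = \calB$ and to sequences of \emph{simple} reflections: only then are the one-step convolutions $\mathbb P^1$-bundles with the clean fiber dichotomy above, so that no $\mathbb A^1 - \mathbb A^0$ factors are forced to appear (contrast Theorem~\ref{thmA}). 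Once the local triviality is established, the conclusion follows formally, using the standard fact that an extension of affine-space pavings along a Zariski-locally trivial affine-space fibration is again an affine-space paving.
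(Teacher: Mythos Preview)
Your proposal is correct and follows essentially the same route as the paper: induct on $r$, project the fiber $m^{-1}(v\calB)$ onto its $(r-1)$-st coordinate, observe via BN-pair relations that the image (your $Z_x$, the paper's $\Im(\xi)$) meets each of $Y(v)$ and $Y(vs_r)$ in an affine space, and use triviality of $m'$ over each such piece together with the inductive hypothesis. The paper makes the triviality step precise by proving once and for all (Proposition~\ref{strat_triv}, using the factorization $\mathcal U = (\mathcal U \cap {}^v\overline{\mathcal U}_\calP)\cdot(\mathcal U \cap {}^v\calP)$) that $m'$ is trivial over every $\calB$-orbit in its image, rather than carrying an equivariance statement through the induction as you suggest; also note that in fact $Z_x$ is always a point or $\mathbb P^1$, never a bare $\mathbb A^1$.
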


This theorem was proved in \cite[Theorem~2.5.2]{dHL18}.  However, here we give a different proof, which has the advantage that it can be easily adapted to prove the special case of Theorem~\ref{thmA} where $\calP = \calB$ and every $w_i$ is a simple reflection. This in turn is used to prove the general case of Theorem~\ref{thmA}.

The results above should all have analogues at least for connected reductive groups $G$ which are defined and tamely ramified over a field $k(\!(t)\!)$ with $k$ perfect (see Remark~\ref{k_perf_rem}).  The proofs will necessarily be more involved and technical, and the author expects them to appear in a separate work.

\smallskip

In Section~\ref{Z_sec}, we extend all the preceding results over $\bbZ$.  We prove in that section the following result (Theorem~\ref{CvdHS_gen}), the second part of which recovers \cite[Theorem~1.2]{CvdHS+}.

\begin{thm} \label{thmD} Assume $G_\bbZ \supset B_\bbZ \supset T_\bbZ$ is a connected reductive group over $\bbZ$ with Borel pair defined over $\bbZ$. Consider a parahoric subgroup $\calP_\bbZ$ and the associated Schubert schemes $X_{\calP, \bbZ}(w) \subset \Flag_{\calP,\bbZ}$. The convolution morphisms attached to $w_\bullet = (w_1, \dots, w_r) \in W^r$ may be constructed over $\bbZ$,  
$$
m_{w_\bullet, \calP_\bbZ}\colon X_{\calP, \bbZ}(w_\bullet) \longrightarrow X_{\calP, \bbZ}(w_*), 
$$
and for any $v \leq w_*$, the reduced fiber $m_{w_\bullet, \calP_\bbZ}^{-1}(v\,e_{\calP_\calZ})$ has a cellular paving over $\bbZ$.  Furthermore, for any standard parabolic subgroup $P_\bbZ = M_\bbZ N_\bbZ$ and any pair $(\mu, \lambda) \in X_*(T)^+ \times X_*(T)^{+_M}$, the subscheme $L^+M_\bbZ LN_\bbZ x_\lambda \cap L^+G_\bbZ x_\mu$ of the affine Grassmannian $\Gr_{G, \bbZ}$, with its reduced subscheme structure, has a cellular paving over $\bbZ$.
\end{thm}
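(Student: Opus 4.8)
The strategy is to bootstrap from the characteristic-free results of the first part of the paper (Theorem~\ref{thmA}, Corollary~\ref{corB}, and the affine-Grassmannian intersection results that enter them) to the integral setting, by exploiting the fact that cellular pavings are, by their very definition, constructed by gluing pieces isomorphic to products of $\bbA^1$'s and $(\bbA^1-\bbA^0)$'s, and such a gluing can be performed over $\bbZ$ once it can be performed compatibly over every residue field. First I would set up the integral models: for $G_\bbZ\supset B_\bbZ\supset T_\bbZ$ a Chevalley group scheme, the loop group $LG_\bbZ$, the positive loop group $L^+G_\bbZ$, and more generally the parahoric group schemes $\calP_\bbZ$ are defined as $t$-adic loop functors (as in the work of Pappas--Zhu and Pappas--Rapoport, or following Cass--van den Hove--Scholbach over $\bbZ$); the Schubert schemes $X_{\calP,\bbZ}(w)$ and the twisted products $X_{\calP,\bbZ}(w_\bullet)$ and the convolution morphism $m_{w_\bullet,\calP_\bbZ}$ are then defined exactly as over a field, and their formation commutes with base change to any $\bbZ$-algebra. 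Crucially, each $X_{\calP,\bbZ}(w)$ is a projective, flat $\bbZ$-scheme whose geometric fibers are the usual Schubert varieties, so that $X_{\calP,\bbZ}(w)_{\bbF_p}$ and $X_{\calP,\bbZ}(w)_{\bbQ}$ are the Schubert varieties to which the first part of the paper applies.

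The heart of the argument is to revisit the proof of Theorem~\ref{thmA} (in the $\calP=\calB$, simple-reflection case first, then the general case, and then Corollary~\ref{corB}) and observe that every step is \emph{geometric and explicit}: the exhaustion $\emptyset=X_0\subset X_1\subset\cdots\subset X_l=X$ of a fiber is built out of Bruhat-type cells and iterated affine fibrations coming from one-parameter root subgroups $U_a$, and the isomorphisms of the successive differences $X_i-X_{i-1}$ with products of $\bbA^1$'s and $(\bbA^1-\bbA^0)$'s are induced by morphisms that make sense over $\bbZ$ (multiplication maps of root group schemes $U_{a,\bbZ}\cong\bbG_{a,\bbZ}$, the $\bbZ$-structure on the big cell, etc.). So the plan is: run the same induction over $\bbZ$, producing a chain of closed subschemes $Z_0\subset Z_1\subset\cdots\subset Z_l$ of the reduced fiber $m_{w_\bullet,\calP_\bbZ}^{-1}(v\,e_{\calP_\bbZ})_{\red}$, together with $\bbZ$-morphisms trivializing each locally closed piece $Z_i-Z_{i-1}$ as a product of copies of $\bbA^1_\bbZ$ and $\bbA^1_\bbZ-\bbA^0_\bbZ$. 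To be careful about the word ``reduced'': one checks that a closed subscheme of a flat $\bbZ$-scheme whose generic and all special fibers are reduced is itself reduced, and that the locally closed pieces constructed are already reduced (indeed smooth) over $\bbZ$, so taking reduced structures commutes with base change here; alternatively, one simply defines the cellular paving on the reduced model directly. For the last assertion about $L^+M_\bbZ\,LN_\bbZ\,x_\lambda\cap L^+G_\bbZ\,x_\mu\subset\Gr_{G,\bbZ}$, the same philosophy applies: this intersection is, over any field, one of the spaces shown to have a cellular paving in the course of proving Theorem~\ref{thmA}/Corollary~\ref{corB} (it is the prototype of a fiber of a convolution morphism for a two-term sequence, after using the $P=MN$ decomposition), and its $\bbZ$-model is defined by the evident $t$-adic conditions, flat and with the expected geometric fibers, so the field-theoretic paving procedure globalizes.

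The main obstacle I expect is \emph{checking flatness and fiberwise-constancy of the combinatorics}: one must know that the integral Schubert schemes $X_{\calP,\bbZ}(w)$ are flat over $\bbZ$ with reduced fibers (so that ``the fiber over $v$'' has the same underlying stratified space in every characteristic), that the Bruhat decomposition and the root-subgroup fibration structures used in the first part are available over $\bbZ$ and not just over fields, and that the orbit $L^+M_\bbZ\,LN_\bbZ\,x_\lambda$ and its intersection with $L^+G_\bbZ\,x_\mu$ behave uniformly — e.g. that no collapsing or jumping of strata occurs at bad primes. Flatness of $X_{\calP,\bbZ}(w)$ is known (it follows from the normality/Cohen--Macaulay results of Pappas--Zhu, Faltings, and in the $\bbZ$-setting from Cass--van den Hove--Scholbach), and the needed uniformity of the cells is really just the statement that the combinatorial input (the Iwahori--Weyl group $W$, Bruhat order, Demazure products, lengths of root subgroups appearing) is characteristic-independent, which it is. A secondary, more bookkeeping-level obstacle is to ensure the pieces are genuinely \emph{products over $\bbZ$} of $\bbA^1_\bbZ$ and $\bbA^1_\bbZ-\bbA^0_\bbZ$ in the sense demanded by a cellular paving over $\bbZ$ — i.e. that the trivializing isomorphisms are defined over $\bbZ$, not merely fiberwise — but since they are built from the maps $\bbG_{a}\times\cdots\times\bbG_a\to U$ given by the (integral) structure of the group, this is automatic. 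I would organize the section so that Theorem~\ref{CvdHS_gen} is deduced formally from an ``integral avatar'' of each lemma used to prove Theorem~\ref{thmA}, stated once and for all as: \emph{a locally closed immersion into a flat finite-type $\bbZ$-scheme, which over every field in the paving procedure is identified with a product of affine lines and punctured affine lines via a $\bbZ$-rational map, yields a cellular paving over $\bbZ$.}
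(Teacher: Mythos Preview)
Your overall strategy is correct and matches the paper's: establish integral avatars of each ingredient lemma (the factorization of the pro-unipotent Iwahori, Proposition~\ref{BP_comp}, the triviality over $\calB$-orbits, the BN-pair relations, Lemma~\ref{key_fiber_lem}), and then literally re-run the inductive proof of Theorem~\ref{thmA}/Corollary~\ref{corB} over~$\bbZ$. This is exactly what Section~\ref{Z_sec} does, and the final proof of Theorem~\ref{CvdHS_gen} is just a pointer back to those integral lemmas.

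Two points where your diagnosis of the obstacles diverges from the paper's are worth flagging. First, your emphasis on needing $X_{\calP,\bbZ}(w)$ to have \emph{reduced fibers} is misplaced: this can genuinely fail (it is equivalent to normality of the field-level Schubert variety, which breaks at primes dividing $|\pi_1(G_{\drv})|$), and the paper neither assumes nor needs it. What matters is that the Schubert \emph{cells} $Y_{\calP,\bbZ}(w)$ are smooth over $\bbZ$ with formation commuting with base change, and that the pieces produced by the induction are themselves products of $\bbA^1_\bbZ$ and $\bbA^1_\bbZ-\bbA^0_\bbZ$, hence smooth; one works throughout with reduced structures and never needs the ambient Schubert schemes to have reduced special fibers. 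Second, the obstacle the paper singles out as the real work is one you do not mention: over $\bbZ\pot{t}$ there is no building, so every step that over a field was justified via retractions (Lemmas~\ref{BN_pair_lem} and~\ref{strict-BN_pair_lem}) must be reproved group-theoretically in terms of affine root groups (as in Remarks~\ref{A1_rem} and~\ref{Gm_rem}). This is why the paper spends effort on the Iwahori decomposition over $\bbZ$ (Proposition~\ref{Iwahori_decomp_Z}) and the integral BN-pair relations (Lemma~\ref{BN_pair_lem_Z}).

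For the second assertion about $L^+M_\bbZ\,LN_\bbZ\,x_\lambda \cap L^+G_\bbZ\,x_\mu$, you are right that it reduces to a two-term convolution fiber, but the mechanism is more specific than ``using the $P=MN$ decomposition'': one introduces an auxiliary $\nu$ with $\nu \geq^P \mu$ and uses the identity $(t^{-\nu}Kt^\nu)x_\lambda \cap Kx_\mu = K_P x_\lambda \cap Kx_\mu$ from \cite{HKM}, then recognizes the left side as the fiber of $p_{(t_\mu,\,t_{-\nu-\lambda})}$ over $t^{-\nu}e_{L^+G}$ via Lemma~\ref{key_fiber_lem}. The paper simply notes that this argument, like the rest, goes through over $\bbZ$ once Lemma~\ref{key_fiber_lem_Z} is in hand.
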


\smallskip

\subsection*{Leitfaden}  Here is an outline of the contents of this article. In Sections~\ref{Notation_sec} and~\ref{Review_sec}, we give our notation and recall the basic definitions related to convolution morphisms.  The main idea of the proof of Theorem~\ref{thmA} is to prove it by induction on $r$: One projects from the fiber onto the $r-1$ term in the twisted product; then one needs to show that the image is paved by locally closed subvarieties,  each of which has a $\mathcal C$-paving, and over which the aforementioned projection morphism is trivial.  The strategy of proof is given in more detail in Section~\ref{thmC_proof}. The required triviality statements are proved in Sections~\ref{Fact_sec} and~\ref{Strat_triv_sec}.  The core of the article is found in Sections~\ref{P=B_sec}--~\ref{corB_sec}.  First, Theorem~\ref{thmC} is proved in Section~\ref{thmC_proof}, and this proof is then adapted to prove the special case of Theorem~\ref{thmA} for $\calP = \calB$ and all $w_i$ simple reflections, in Section~\ref{ThmA_spec_case_subsec}. This is used to deduce the special case of Theorem~\ref{thmA} with $\calP = \calB$ in Section~\ref{ThmA_P=B_proof}. Finally, the general case of Theorem~\ref{thmA} is proved in Section~\ref{ThmA_general}, using the previous special cases as stepping stones.  In Section~\ref{corB_sec}, we quickly deduce Corollary~\ref{corB} from Theorem~\ref{thmA}. In Section~\ref{HA_app}, we give an application to structure constants for parahoric Hecke algebras.  In Section~\ref{Z_sec}, we develop all the needed machinery to extend the above results over $\bbZ$. The paper ends with Errata for \cite{dHL18} in Section~\ref{Errata}.

\subsection*{Acknowledgments} I express my thanks to Thibaud van den Hove, whose questions about \cite[Remark~2.5.4]{dHL18} prompted me to write up these results. I  also thank him for helpful comments on an early version of this paper, and for giving me access to an advance copy of the revised version of \cite{CvdHS+}. I am grateful to the referee for helpful remarks and suggestions.

\section{Notation} \label{Notation_sec}

Generally speaking, we follow the same notation and conventions as \cite{dHL18}. Let $G$ be a split connected reductive group over a field $k$ with algebraic closure $\bar{k}$ and separable closure $k^{\sep}$.  Fix a Borel pair $G \supset B \supset T$, also split and defined over $k$. This gives rise to the based absolute root system 
$
(X^*(T) \supset \Phi, X_*(T) \supset \Phi^\vee, \Delta),
$
the real vector space $V = X_*(T) \otimes \mathbb R$, and the canonical perfect pairing $\langle \cdot, \cdot \rangle\colon X^*(T) \times X_*(T) \rightarrow \mathbb Z$.
The affine roots $\Phi_{\aff} = \{ a = \alpha + n \, | \, \alpha \in \Phi, ~~ n \in \mathbb Z\}$ are affine-linear functionals on $V$.  We denote by ${\bf 0}$ the facet containing the origin in $V$ and the $B$-dominant Weyl chamber $\mathfrak C = \{ v \in V \, | \, \langle \alpha, v \rangle > 0, \,\forall \alpha \in \Delta\}$. We denote the set of dominant cocharacters by $X_*(T)^+ := X_*(T) \cap \overline{\mathfrak C}$, where $\overline{\mathfrak C}$ is the closure of $\mathfrak C$ in $V$. We also fix the base alcove ${\bf a} \subset \mathfrak C$ whose closure contains ${\bf 0}$.  The positive simple affine roots $\Delta_{\aff}$ are the minimal affine roots $a = \alpha + n$ taking positive values on ${\bf a}$.  We use the convention that $\lambda \in X_*(T)$ acts on $V$ by translation by $\lambda$.  The finite Weyl group is the Coxeter group $(W_0, S)$ generated by the simple reflections
$\{ s_\alpha \in S\}$ on $V$, for $\alpha \in \Delta$; the group $W_0$ fixes ${\bf 0}$. The extended affine Weyl group $W = X_*(T) \rtimes W_0$ acts on $V$ and hence on the set $\Phi_{\aff}$ by precomposition.  Let $(W_{\aff}, S_{\aff})$ denote the Coxeter group generated by $S_{\aff}$, the simple affine reflections $s_a$ for $a \in \Delta_{\aff}$.  It has a Bruhat order $\leq$ and a length function $\ell\colon W_{\aff} \rightarrow \mathbb Z_{\geq 0}$. Let $\Omega \subset W$ be the subgroup stabilizing ${\bf a} \subset V$. The group decomposition $W = W_{\aff} \rtimes \Omega$ allows us to extend $\leq$ and $\ell$ from $W_{\aff}$ to $W$, by declaring $\Omega$ to be the set of length zero elements in $W$. 

Fix the field $F = k(\!(t)\!)$ and ring of integers $\mathcal O = k[\![t]\!]$. The Iwahori--Weyl group $N_G(T)(F)/T(\mathcal O)$ may be naturally identified with $W$. We choose once and for all lifts of $w \in W_0$ in $N_GT(\mathcal O)$, and we lift $\lambda \in X_*(T)$ to the element $t^\lambda := \lambda(t) \in T(F)$.  Altering these lifts by any elements in $T(\mathcal O)$ does not affect anything in what follows.
The group $N_G(T)(F)$ acts naturally on the apartment $X_*(T) \otimes \mathbb R$, and our convention  is that $t^\lambda$ acts by translation by $-\lambda$. This is the same as the convention for identifying the Iwahori--Weyl group with the extended Weyl group which is used in \cite{BT72}.

We define the loop group $LG$ (resp., positive loop group $L^+G$) to be the group ind-scheme (resp., group scheme) over $k$ representing the group functor on $k$-algebras $LG(R) = G(R(\!(t)\!))$ (resp., $L^+G(R) = G(k[\![t]\!])$). For a facet ${\bf f}$ contained in the closure of ${\bf a}$, we obtain the ``standard'' parahoric group scheme $P_{\bf f}$ (see \cite{BT84, HR08}).  We often write $\calP := L^+P_{\bf f}$ and regard this as a (standard) parahoric group in $LG$.  Note that $L^+G = L^+P_{\bf 0}$. The (standard) Iwahori subgroup will be denoted by $\calB := L^+P_{\bf a}$.  Let $W_{\bf f} =: W_{\calP} \subset W_{\aff}$ be the subgroup which fixes ${\bf f}$ pointwise; it is a Coxeter group generated by the simple affine reflections which fix ${\bf f}$. The Bruhat order $\leq$ on $W$ descends to a Bruhat order $\leq$ on coset spaces such as $W_\calP \backslash W / W_\calP$ and $W/W_\calP$. Let $^{\bf f} W ^{\bf f}$ denote the elements $w \in W$ which are the unique $\leq$-maximal elements in their double cosets $W_\calP w W_\calP$.  

The partial affine flag variety is by definition the \'{e}tale sheafification of the presheaf on the category ${\aff}_k$ of affine schemes $\Spec(R)$  over $k$ given by $R \mapsto LG(R)/L^+P_{\bf f}(R)$. It is represented by an ind-projective ind-scheme denoted simply by $\Flag_\calP = LG/L^+P_{\bf f}$, and it carries a left action by $\calP = L^+P_{\bf f}$. Denote by $e_\calP$ its natural base point.  It is well known (see \textit{e.g.} \cite{HR08})
that for any two standard parahoric subgroups ${\mathcal Q}$ and~$\calP$, we have a natural bijection on the level of $k$-points and $k^{\sep}$-points 
\begin{equation} \label{BT_decomp}
{\mathcal Q}(k) \backslash \Flag_\calP(k) = W_{\mathcal Q} \backslash W/W_\calP = {\mathcal Q}(k^{\sep}) \backslash \Flag_\calP(k^{\sep}).
\end{equation}
The elements of Bruhat--Tits theory used in \cite[Proposition~8, Remark~9]{HR08} work for split groups without any assumption that the residue field $k$ is perfect (\textit{cf.} also Remark~\ref{k_perf_rem}).  Alternatively, for split $G$, (\ref{BT_decomp}) can be proved directly for any residue field $k$ (including $\bar{k}$), using BN-pair relations.

For $w \in W$, let $Y_\calP(w)$ (resp., $Y_{\calB \calP}(w)$) denote the $\calP$-orbit (resp., $\calB$-orbit) of $we_\calP$ in $\Flag_\calP$.  When $\calP = \calB$, we will often omit the subscripts. Define the {\em Schubert variety} $X_\calP(w)$ to be the Zariski closure of $Y_\calP(w) \subset \Flag_\calP$, endowed with reduced structure. Similarly, define $X(w) = X_\calB(w)$ and $X_{\calB \calP}(w)$.

In the part of this paper where we work over a field $k$, the schemes which arise are finite-type separated schemes over $k$ (not necessarily irreducible). We will always give them a reduced structure, and we will call them ``varieties.''  The morphisms of varieties we consider will always be defined over $k$, and the varieties and the morphisms between them will usually be described on the level of points in an unspecified algebraic closure of $k$.

In this article, we use the following notation: An equality  $\bigsqcup_i Z_i = Z$ denotes a finite (or possibly countable) paving of a scheme or ind-scheme $Z$ into disjoint locally closed subschemes $Z_i$. Note that the underlying sets are disjoint but there could be closure relations between them---they are not necessarily coproducts in the category of topological spaces.

\section{Review of convolution morphisms} \label{Review_sec}

For $w \in W$, define $\overline{\calP w \calP} = \bigsqcup_{v \leq w} \calP v \calP$, where $v$ ranges over elements $v \in W_\calP \backslash W /W_\calP$. For any $r$-tuple $w_\bullet = (w_1, \dots, w_r) \in W^r$, we define $X_\calP(w_\bullet)$ to be the quotient of $\calP^r = (L^+P_{\bf f})^r$ acting on
$$
\overline{\calP w_1 \calP} \times \overline{\calP w_2 \calP} \times ~ \cdots ~ \times \overline{\calP w_r \calP}
$$
by the right action
\begin{equation} \label{r-fold_action}
(g_1, g_2, \dots, g_r) \cdot (p_1, p_2, \dots, p_r)  := \left(g_1 p_1, p^{-1}_1g_2 p_2, \dots, p^{-1}_{r-1} g_r p_r\right).
\end{equation}
We define $Y_\calP(w_\bullet)$ similarly, with each $\overline{\calP w_i \calP}$ replaced by $\calP w_i \calP$. The quotients should be understood as \'{e}tale sheafifications of presheaf quotients on the category ${\aff}_k$. It is well known that $X_\calP(w_\bullet)$ (resp., $Y_\calP(w_\bullet)$) is represented by an irreducible projective (resp., quasi-projective) $k$-variety.

We regard the above objects as ``twisted products'': $X_\calP(w_\bullet) = X_\calP(w_1) \widetilde{\times} X_\calP(w_2) \widetilde{\times} \cdots \widetilde{\times} X_\calP(w_r)$ (resp., $Y_\calP(w_\bullet) = Y_\calP(w_1) \widetilde{\times} Y_\calP(w_2) \widetilde{\times} \cdots \widetilde{\times} Y_\calP(w_r)$), consisting of tuples $(g_1\calP, g_2\calP, \dots, g_r\calP)$ such that $g_{i-1}^{-1}g_i \in \overline{\calP w_i \calP}$ (resp., \,$\calP w_i \calP$) for all $1 \leq i \leq r$ (here $g_0 = 1$ by convention).

Recall that the Demazure product $W^r \rightarrow W$, $(w_1, w_2, \dots, w_r) \mapsto w_1 * w_2 * \cdots * w_r$ is an associative operation.  It induces an associative $r$-fold product $(\,^{\bf f}W^{\bf f})^r \rightarrow \, ^{\bf f}W^{\bf f}$; see \textit{e.g.} \cite[Section~4]{dHL18}.  Given any $w \in W$, let $^{\bf f}w^{\bf f}$ denote the unique $\leq$-maximal element in $W_\calP w W_\calP$.  Given $w_\bullet = (w_1, w_2, \dots, w_r) \in W^r$, define
$$
w_* := \,^{\bf f}w_1^{\bf f} * \,^{\bf f}w_2^{\bf f} * \cdots * \,^{\bf f}w_r^{\bf f}.
$$
Then the multiplication map $(LG)^r \rightarrow LG$ descends to the quotient and defines the {\em convolution morphisms}
\begin{align*}
m_{w_\bullet} &= m_{w_\bullet, \calP} \colon X_\calP(w_\bullet) \longrightarrow X_\calP(w_*),  \\
p_{w_\bullet} &= p_{w_\bullet, \calP} \colon Y_\calP(w_\bullet) \longrightarrow X_\calP(w_*);  
\end{align*}
see \cite[Section~4]{dHL18}. We might describe those of the  second kind as {\em uncompactified convolution morphisms}.

\section{Consequences of a factorization of the pro-unipotent Iwahori subgroup} \label{Fact_sec}

Recall that $\bf f$ is a facet in the closure of ${\bf a}$ and the facets ${\bf f}$ and ${\bf a}$ give rise to the Iwahori and parahoric subgroups $\calB = L^+P_{\bf a}$ and $\calP = L^+P_{\bf f}$. The group $\mathcal U$ is the pro-unipotent radical of the Iwahori subgroup $\calB$; it is the preimage of the unipotent radical $U \subset B$ under the natural homomorphism $\calB \rightarrow B$ induced by $t \mapsto 0$; see \cite[Section~3.7]{dHL18}. Also recall  that $\overline{\mathcal U}_{\calP} = L^{--}P_{\bf f}$ is the ind-affine group ind-scheme defined in \cite[Definition~3.6.1]{dHL18}, called the negative parahoric loop group. The definition is given over $\mathbb Z$ in (\ref{neg_parahoric_def}).

For an affine root $a$, the notation $a \overset{\bf f}{>} 0$ means that $a$ takes positive values on the facet ${\bf f}$.  When ${\bf f} = {\bf a}$, we usually simply write $a > 0$.  The notation $a \overset{\bf f}{\geq} 0$, $a \overset{\bf f}{<} 0$, \textit{etc.}, has the obvious meaning.

\begin{prop} \label{dHL.3.7.4} Let $\calP \supset \calB$ be any fixed parahoric subgroup as above, and let $v \in W$ be an arbitrary element. Let ${\bf f}$ be the facet in the closure of the base alcove ${\bf a}$ which corresponds to $\calP$.
\begin{a-enumerate}
\item\label{dHL.3.7.4a} We have a \textup{(}non-commuting\textup{)} factorization of group functors
$$
\mathcal U = \left(\mathcal U \cap \,^v\overline{\mathcal U}_\calP\right) \, \cdot \, (\mathcal U \cap \,^v \calP).
$$
\item\label{dHL.3.7.4b} There is an isomorphism of schemes $\mathcal U \cap \,^v\overline{\mathcal U}_\calP \cong \prod_a U_a$, where $U_a$ ranges over the affine root groups corresponding to affine roots with $a > 0$ and $v^{-1}a \overset{\bf f}{<} 0$, and the product is taken in any order.
\end{a-enumerate}
\end{prop}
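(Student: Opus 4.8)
The plan is to reduce everything to the combinatorics of affine root groups. Write $\Phi_{\aff}^+ = \{a \in \Phi_{\aff} : a > 0 \text{ on } {\bf a}\}$. I will use the following standard structural input. The group $\mathcal U$ is generated by the affine root groups $U_a \cong \mathbb G_a$ for $a \in \Phi_{\aff}^+$ together with the pro-unipotent part $T_1 := \ker(T(\calO) \to T(k))$ of the torus, and, for any chosen convex order on $\Phi_{\aff}^+$, it decomposes as a scheme as $\big(\prod_{a \in \Phi_{\aff}^+} U_a\big)\cdot T_1$; moreover $T_1$ is normalized by $N_G(T)(F)$, normalizes each $U_a$, and is contained in $\calP$. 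The parahoric $\calP = L^+P_{\bf f}$ contains every $U_b$ with $b \overset{\bf f}{\geq} 0$, and its negative counterpart $\overline{\mathcal U}_\calP = L^{--}P_{\bf f}$ contains every $U_b$ with $b \overset{\bf f}{<} 0$; one has the big-cell relation $\calP \cap \overline{\mathcal U}_\calP = \{1\}$. Finally, conjugation by a lift of $v$ carries $U_b$ to $U_{vb}$ and fixes $T_1$, independently of the chosen lift. Accordingly set
$$
\Psi_1 := \{a \in \Phi_{\aff}^+ : v^{-1}a \overset{\bf f}{<} 0\}, \qquad \Psi_2 := \{a \in \Phi_{\aff}^+ : v^{-1}a \overset{\bf f}{\geq} 0\},
$$
so $\Phi_{\aff}^+ = \Psi_1 \sqcup \Psi_2$, and put $\mathcal U^- := \prod_{a \in \Psi_1} U_a$ and $\mathcal U^+ := T_1 \cdot \prod_{a \in \Psi_2} U_a$.

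First I would record two combinatorial facts. (i) Both $\Psi_1$ and $\Psi_2$ are \emph{closed}, i.e.\ any affine root $ia+jb$ ($i,j\geq 1$) formed from members of one of them lies again in that set: each defining condition --- positivity on ${\bf a}$, and $v^{-1}(\cdot)\overset{\bf f}{<}0$ resp.\ $v^{-1}(\cdot)\overset{\bf f}{\geq}0$ --- is preserved under positive linear combinations of affine-linear functionals. Hence $\mathcal U^-$ and $\mathcal U^+$ are group functors and $\prod_{a\in\Psi_i}U_a$ is well defined in any order. (ii) $\Psi_1$ is \emph{finite}: $a\in\Psi_1$ says exactly that the wall $\{a=0\}$ separates the alcove ${\bf a}$ from the facet $v{\bf f}$, and only finitely many affine walls do so. (Explicitly, writing $v = t^\lambda w\tau$ with $w\in W_0$, $\tau\in\Omega$, one checks that for each of the finitely many $\alpha\in\Phi$ only finitely many integers $n$ with $\alpha+n\in\Phi_{\aff}^+$ satisfy $v^{-1}(\alpha+n)\overset{\bf f}{<}0$.) In particular $\mathcal U^-$ is a product of finitely many copies of $\mathbb G_a$, hence isomorphic to $\mathbb A^{|\Psi_1|}$ for any ordering; granting part~\ref{dHL.3.7.4a}, this already yields part~\ref{dHL.3.7.4b}.

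Next I would choose the convex order used in the decomposition of $\mathcal U$ so that every element of $\Psi_1$ precedes every element of $\Psi_2$ (such an order exists: $\Psi_1$ is a finite closed set with closed complement, realized as the walls crossed along a minimal gallery from ${\bf a}$ toward $v{\bf f}$, which one lists first and then extends convexly). With this order the product decomposes as $\mathcal U = \mathcal U^- \cdot \mathcal U^+$, the torus part $T_1$ being absorbed into the second factor since it normalizes each $U_a$. It remains to identify the factors. By construction $\mathcal U^- \subseteq \mathcal U$, and $\mathcal U^- \subseteq {}^v\overline{\mathcal U}_\calP$ because for $a\in\Psi_1$ we have $U_a = {}^v U_{v^{-1}a}$ with $v^{-1}a\overset{\bf f}{<}0$, so $U_a\subseteq {}^v\overline{\mathcal U}_\calP$; likewise $\mathcal U^+ \subseteq \mathcal U\cap {}^v\calP$, using additionally $T_1\subseteq\calP$ and ${}^vT_1 = T_1$. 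For the reverse inclusions, take $u\in\mathcal U\cap{}^v\overline{\mathcal U}_\calP$ and write $u = u^-u^+$ with $u^\pm\in\mathcal U^\pm$; then $u^+ = (u^-)^{-1}u$ lies in ${}^v\overline{\mathcal U}_\calP$ as well as in ${}^v\calP$, hence in ${}^v(\calP\cap\overline{\mathcal U}_\calP) = \{1\}$, so $u = u^-\in\mathcal U^-$. Symmetrically $\mathcal U\cap{}^v\calP = \mathcal U^+$. This proves the factorization~\ref{dHL.3.7.4a}, and with (ii) it proves~\ref{dHL.3.7.4b}.

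I expect the one genuinely delicate point to be the existence of a convex total order on $\Phi_{\aff}^+$ in which $\Psi_1$ precedes $\Psi_2$, together with the care needed to run these scheme-level product decompositions at the level of group functors (through the finite-dimensional quotients of the pro-unipotent group $\mathcal U$, and keeping track of the torus part $T_1$). Everything else is either standard Bruhat--Tits structure theory --- the affine-root-group descriptions of $\calP$ and $\overline{\mathcal U}_\calP$ and the triviality $\calP\cap\overline{\mathcal U}_\calP = \{1\}$, all set up in \cite{dHL18} --- or the elementary separation and closedness statements for $\Psi_1$ and $\Psi_2$. Indeed this is in substance the argument of \cite[Proposition~3.7.4]{dHL18}, which one could also simply invoke.
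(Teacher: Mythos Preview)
Your proposal is correct and follows essentially the same strategy as the paper's own argument (which the paper gives in the $\bbZ$-version, Proposition~\ref{dHL.3.7.4_Z}, and declares to work verbatim over a field). Both proofs rest on the same three ingredients: the finiteness of $\Psi_1$, the closedness of $\Psi_1$ and $\Psi_2$ under the commutator relations, and the big-cell relation $\calP\cap\overline{\mathcal U}_\calP=\{1\}$ to pin down the factors. The only difference is packaging: the paper starts from the Iwahori decomposition $\mathcal U=(\mathcal U\cap L\overline U)\cdot\calT^{(1)}\cdot(\calB\cap LU)$, writes $\mathcal U=U_{a_1}\cdots U_{a_N}\,(\mathcal U\cap{}^v\calP)$ for a finite list of positive affine roots, and then rearranges by hand using the Chevalley commutator relations and a total order $a\prec b\Leftrightarrow a(x_0)<b(x_0)$ for a general $x_0\in{\bf a}$; you instead posit up front a convex order on $\Phi_{\aff}^+$ in which $\Psi_1$ precedes $\Psi_2$ and read off the factorization directly. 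Your route is slightly slicker but shifts the work into the existence of that adapted convex order and the validity of the ordered product decomposition of $\mathcal U$ in an arbitrary convex order---exactly the ``delicate point'' you flag---whereas the paper's explicit commutator shuffling avoids any appeal to such an order.
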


\begin{proof} This is \cite[Proposition~3.7.4]{dHL18}. The proof over $\mathbb Z$ given in Proposition~\ref{dHL.3.7.4_Z} works here as well.
\end{proof}

\begin{rmk} \label{k_perf_rem}
Usually the hypothesis that $k$ is perfect is implicit in Bruhat--Tits theory and the theory of parahoric subgroups: All residue fields of the complete discretely valued fields $F$ one works over should be assumed to be perfect so that Steinberg's theorem applies to show that every reductive group over the completion $\breve{F}$ of a maximal unramified extension of $F$ is quasi-split. (This assumption on residue fields is missing from \cite{HR08} and should be added.  I am grateful to Gopal Prasad for pointing out this oversight.)  Since we are assuming our group $G$ is already split over $k$, it is automatically quasi-split over $\breve{k(\!(t)\!)} = k^{\sep}(\!(t)\!)$. Therefore, we do not need to assume $k$ is perfect when invoking Bruhat--Tits theory for $G$.  Note that the hypothesis that $k$ is perfect appears to be used in the proof of \cite[Proposition~3.7.4]{dHL18} since that proof relies on \cite[Remark~3.1.1]{dHL18}.  However, the latter actually holds for all $k$: We see the key point that $B$ is {\em $k$-triangularizable} in the sense of \cite[Section~14.1]{Spr} by invoking \cite[Propositions~16.1.1 and~14.1.2]{Spr} applied to $B$.
\end{rmk}

\begin{lem} \label{root_prod} Assume that $v$ is right-$\bf f$-minimal, i.e., it is the unique minimal element in its coset $v W_{\langle {\bf f}\rangle}$, where $W_{\langle \bf f \rangle}$ is the Coxeter subgroup of\, $W_{\aff}$ which fixes $\bf f$ pointwise. Then for any positive affine root $a >0$, we have
$$
v^{-1}a \overset{\bf f}{<} 0 \, \Longleftrightarrow \, v^{-1}a < 0.
$$
\end{lem}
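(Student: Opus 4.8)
The plan is to treat the two implications separately. One direction, ``$v^{-1}a \overset{\bf f}{<} 0 \Rightarrow v^{-1}a < 0$'', holds for \emph{every} $v$ and uses only that ${\bf f}$ lies in the closure $\overline{\bf a}$ of the base alcove: writing $b := v^{-1}a$ and picking a point $x_0$ in the relative interior of ${\bf f}$, one has $b(x_0) < 0$, so the affine-linear function $b$ cannot be positive on the open alcove ${\bf a}$ (otherwise it would be $\ge 0$ on $\overline{\bf a} \ni x_0$); since every affine root has a constant nonzero sign on ${\bf a}$, this forces $b < 0$. Note this step uses neither $a > 0$ nor the minimality of $v$.

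The substantive direction is ``$v^{-1}a < 0 \Rightarrow v^{-1}a \overset{\bf f}{<} 0$''. Again set $b := v^{-1}a$ and assume $b < 0$. Since ${\bf f} \subseteq \overline{\bf a}$ we get $b \le 0$ on ${\bf f}$, and since the sign of $b$ is constant on the facet ${\bf f}$, either $b \overset{\bf f}{<} 0$ (which is what we want) or $b \overset{\bf f}{=} 0$. So the whole point is to exclude the case $b \overset{\bf f}{=} 0$.

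Suppose $b \overset{\bf f}{=} 0$, i.e.\ ${\bf f}$ is contained in the hyperplane $\{b = 0\}$. Then the reflection $s_b$ fixes ${\bf f}$ pointwise, hence $s_b \in W_{\langle {\bf f}\rangle}$. Because $v$ is the minimal-length element of the coset $vW_{\langle {\bf f}\rangle}$, the element $vs_b$ of that coset satisfies $\ell(vs_b) \ge \ell(v)$, and since $s_b$ is a reflection (so of odd length) in fact $\ell(vs_b) > \ell(v)$. Now invoke the standard length/root dictionary: for a positive affine root $c$ one has $\ell(ws_c) < \ell(w) \Leftrightarrow w(c) < 0$. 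Applying it with $w = v$ and $c = -b$ (which is positive, as $b < 0$, and satisfies $s_{-b} = s_b$), the inequality $\ell(vs_b) > \ell(v)$ shows that $v(-b)$ is not a negative affine root, hence is positive. But $v(-b) = -\bigl(v(v^{-1}a)\bigr) = -a$, so $a < 0$, contradicting $a > 0$. Therefore $b \overset{\bf f}{=} 0$ is impossible, and $b \overset{\bf f}{<} 0$, as claimed.

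The only real content is this exclusion argument; its ingredients --- that $W_{\langle {\bf f}\rangle}$, being the pointwise stabilizer of ${\bf f}$ in $W_{\aff}$, contains $s_c$ for every affine root $c$ vanishing on ${\bf f}$, together with the length/root dictionary --- are standard facts about (extended) affine Coxeter groups, so I anticipate no genuine obstacle. Equivalently, one may phrase the key input as: a right-${\bf f}$-minimal element of $W$ carries every positive affine root vanishing on ${\bf f}$ to a positive affine root, since such roots are exactly the positive roots of the parabolic subgroup $W_{\langle {\bf f}\rangle}$.
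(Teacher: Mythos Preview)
Your proof is correct and follows essentially the same route as the paper's: both handle $(\Rightarrow)$ trivially via ${\bf f} \subset \overline{\bf a}$, and for $(\Leftarrow)$ both exclude the case $v^{-1}a \overset{\bf f}{=} 0$ by noting that then $s_{v^{-1}a} \in W_{\langle {\bf f}\rangle}$, so right-${\bf f}$-minimality forces $s_a v = v s_{v^{-1}a} > v$, contradicting $a > 0$ and $v^{-1}a < 0$. The only cosmetic difference is that the paper phrases the final contradiction geometrically (the hyperplane $H_a$ separates ${\bf a}$ from $v{\bf a}$, hence $s_a v < v$) whereas you phrase it via the equivalent length/root dictionary $\ell(ws_c) < \ell(w) \Leftrightarrow w(c) < 0$.
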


\begin{proof}
The implication $(\Rightarrow)$ uses only that ${\bf f}$ belongs to the closure of ${\bf a}$ and holds for any $v \in W$.

Next we prove $(\Leftarrow)$.   Assuming $v^{-1}a < 0$, we wish to prove $v^{-1}a \overset{\bf f}{<} 0$.  Suppose on the contrary that $v^{-1}a \overset{\bf f}{\geq} 0$. Combining it with $v^{-1}a  < 0$, we deduce $v^{-1}a \overset{\bf f}{=} 0$, that is, $v^{-1}s_a v \in W_{\langle \bf f \rangle}$.   Since $v$ is right-$\bf f$-minimal and $s_a v \in vW_{\langle \bf f \rangle}$, we deduce that $s_a v > v$.  On the other hand, since $a$ is positive on ${\bf a}$ and $a$ is negative on $v\bf a$, we see that $v{\bf a}$ and ${\bf a}$ are on opposite sides of the affine root hyperplane $H_a$, which means $s_a v < v$, so we have a contradiction.
\end{proof}

\begin{prop} \label{BP_comp}
If $v \in W$ is right-$\bf f$-minimal, then we have isomorphisms
$$
Y_{\calB \calP}(v)~\cong~\mathcal U \cap \,^v\overline{\mathcal U}_\calP~\cong~\mathcal U \cap \,^v\overline{\mathcal U}~\cong~Y_{\calB \calB}(v).
$$
\end{prop}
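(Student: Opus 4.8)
The plan is to realize $Y_{\calB\calP}(v)$ directly as the scheme $\mathcal U\cap\,^v\overline{\mathcal U}_\calP$ through the orbit map $u\mapsto u\,v\,e_\calP$, and then to identify $\mathcal U\cap\,^v\overline{\mathcal U}_\calP$ with $\mathcal U\cap\,^v\overline{\mathcal U}$ by recognizing both as the same product of affine root groups. First I would replace the $\calB$-orbit by a $\mathcal U$-orbit: reduction modulo $t$ carries $\calB=L^+P_{\bf a}$ onto the Borel $B=TU$ with $\mathcal U$ the preimage of $U$, and $T(\mathcal O)\subset\calB$ maps onto $T$, so $\calB=\mathcal U\cdot T(\mathcal O)$. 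Since $T(\mathcal O)\subset\calP$ fixes $e_\calP$ while a lift of $v$ in $N_G(T)(F)$ normalizes $T(\mathcal O)$, the point $v\,e_\calP$ is fixed by $T(\mathcal O)$, whence $Y_{\calB\calP}(v)=\calB\,v\,e_\calP=\mathcal U\,v\,e_\calP$.

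Next I would invoke the factorization $\mathcal U=(\mathcal U\cap\,^v\overline{\mathcal U}_\calP)\cdot(\mathcal U\cap\,^v\calP)$ of Proposition~\ref{dHL.3.7.4}(\ref{dHL.3.7.4a}): if $u=u_1u_2$ with $u_1\in\mathcal U\cap\,^v\overline{\mathcal U}_\calP$ and $u_2=v\,p\,v^{-1}$ for some $p\in\calP$, then $u_2\,v\,e_\calP=v\,p\,e_\calP=v\,e_\calP$, so $u\,v\,e_\calP=u_1\,v\,e_\calP$ and therefore $Y_{\calB\calP}(v)=(\mathcal U\cap\,^v\overline{\mathcal U}_\calP)\,v\,e_\calP$. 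To promote this set-theoretic equality to an isomorphism of schemes I would use the negative parahoric big cell of \cite[Section~3.6]{dHL18}: the multiplication $\overline{\mathcal U}_\calP\times\calP\to LG$ is an open immersion, so $g\mapsto g\,e_\calP$ identifies $\overline{\mathcal U}_\calP=L^{--}P_{\bf f}$ with an open subscheme of $\Flag_\calP$; conjugating and left-translating by $v$, the map $g\mapsto g\,v\,e_\calP$ is an open immersion on $\,^v\overline{\mathcal U}_\calP$, and restricting it along the closed immersion $\mathcal U\cap\,^v\overline{\mathcal U}_\calP\hookrightarrow\,^v\overline{\mathcal U}_\calP$ produces a locally closed immersion whose image, with reduced structure, is $(\mathcal U\cap\,^v\overline{\mathcal U}_\calP)\,v\,e_\calP=Y_{\calB\calP}(v)$. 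This gives the first isomorphism, and I note that this part of the argument requires no hypothesis on $v$.

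For the middle isomorphism I would apply Proposition~\ref{dHL.3.7.4}(\ref{dHL.3.7.4b}), which presents $\mathcal U\cap\,^v\overline{\mathcal U}_\calP$ as the image inside $\mathcal U$ of the ordered product $\prod_a U_a$ over affine roots with $a>0$ and $v^{-1}a\overset{\bf f}{<}0$; specializing to ${\bf f}={\bf a}$ (for which every element is right-${\bf a}$-minimal, since $W_{\langle{\bf a}\rangle}=\{1\}$) presents $\mathcal U\cap\,^v\overline{\mathcal U}$ as the image of the same ordered product, now over affine roots with $a>0$ and $v^{-1}a<0$. This is exactly where the hypothesis that $v$ be right-${\bf f}$-minimal is used: Lemma~\ref{root_prod} shows that the two conditions $v^{-1}a\overset{\bf f}{<}0$ and $v^{-1}a<0$ pick out the same positive affine roots, so the two products agree as morphisms and $\mathcal U\cap\,^v\overline{\mathcal U}_\calP=\mathcal U\cap\,^v\overline{\mathcal U}$ as subschemes of $\mathcal U$. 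Finally, applying the first two paragraphs with $\calP=\calB$ (so that $\overline{\mathcal U}_\calP$ becomes $\overline{\mathcal U}$) gives $\mathcal U\cap\,^v\overline{\mathcal U}\cong Y_{\calB\calB}(v)$, and concatenating the three isomorphisms proves the proposition.

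\textbf{Main obstacle.} The step that needs genuine care is the scheme-theoretic claim in the second paragraph---that the orbit map is an honest locally closed immersion and not merely a bijective morphism, which could fail in positive characteristic. This is precisely what the open immersion $\overline{\mathcal U}_\calP\times\calP\hookrightarrow LG$ of \cite[Section~3.6]{dHL18} delivers. The remaining ingredients---the decomposition $\calB=\mathcal U\cdot T(\mathcal O)$, the factorization of $\mathcal U$, and the bookkeeping with affine root groups---are routine.
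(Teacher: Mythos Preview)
Your argument is correct and follows essentially the same route as the paper: reduce the $\calB$-orbit to a $\mathcal U$-orbit using that $v$ normalizes $T(\mathcal O)$, apply Proposition~\ref{dHL.3.7.4}\eqref{dHL.3.7.4a} together with the open immersion $\overline{\mathcal U}_\calP\hookrightarrow\Flag_\calP$ to get the first isomorphism, and then use Proposition~\ref{dHL.3.7.4}\eqref{dHL.3.7.4b} plus Lemma~\ref{root_prod} for the middle one. The only difference is that you spell out the big-cell argument and the role of reduced structures more carefully than the paper's terse proof does.
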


\begin{proof}
Since $v$ normalizes $T(\mathcal O)$, we have $Y_{\calB \calP}(v) = \mathcal U v \calP/\calP$, which identifies with $\mathcal U \cap \,^v\overline{\mathcal U}_\calP$ by Proposition~\ref{dHL.3.7.4}\eqref{dHL.3.7.4a} and by the fact that $\overline{\mathcal U}_{\calP} \rightarrow \Flag_\calP$ is an (open) immersion; see \textit{e.g.} \cite[Theorem~2.3.1]{dHL18}. This is identified with $\mathcal U \cap \,^v\overline{\mathcal U}$ by Proposition~\ref{dHL.3.7.4}\eqref{dHL.3.7.4b} and Lemma~\ref{root_prod}.
\end{proof}

\begin{rmk}
The proof of Proposition~\ref{BP_comp} given here ultimately relies on the negative parahoric loop group introduced in \cite{dHL18}.  Another proof which is more general and which avoids this reliance is given in \cite[Lemma~3.3]{HaRi2}. We give the proof above because it is an almost immediate consequence of Proposition~\ref{dHL.3.7.4}, which we need anyway to establish Proposition~\ref{strat_triv} below.  
\end{rmk}

\section{Stratified triviality of convolution morphisms} \label{Strat_triv_sec}

\begin{dfn}
Let $f\colon X \rightarrow Y$ be a morphism of schemes over a base scheme $S$. For a locally closed $S$-subscheme $Z \subset Y$ and an $S$-point $z\colon S \rightarrow Z$, write $f^{-1}(z) = f^{-1}(Z) \times_{f,Z, z} S$. We say that $f$ is {\em trivial} over $Z$ if for some point $z \in Z(S)$ \textup{(}equivalently, for all points $z \in Z(S)$\textup{)}, there is an isomorphism of schemes $\phi_z\colon f^{-1}(Z) \overset{\lowsim}{\rightarrow} f^{-1}(z) \times_S Z$ such that the diagram 
\begin{equation} \label{triangle}
\xymatrix{
f^{-1}(Z) \ar[rr]^{\phi_z}_{\sim} \ar[dr]_f && f^{-1}(z) \times_S Z \ar[dl]^{{\rm pr}_2} \\
 & Z &
}
\end{equation}
commutes and such that, writing $\phi_z = (\psi_z, f)$, the morphism $\psi_z$ is a retraction (a left-inverse) of the canonical morphism $i_z\colon f^{-1}(z) \rightarrow f^{-1}(Z)$.
\end{dfn}

To justify the definition, we need the following lemma.

\begin{lem} \label{triviality_lem}
For every pair $z, z' \in Z(S)$, the data $(\phi_z, \psi_z)$ for $z$ give rise to corresponding data $(\phi_{z'}, \psi_{z'})$ for $z'$. This shows that if $f$ is trivial with respect to one choice of\, $z \in Z(S)$, it is trivial with respect to any other choice $z' \in Z(S)$.
\end{lem}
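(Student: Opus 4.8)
The plan is to show that triviality over $Z$ does not depend on the chosen base point, by explicitly transporting the data from $z$ to $z'$. The key observation is that the two $S$-points $z, z' \in Z(S)$ are both sections of $Z \to S$, so one can compare the fibers $f^{-1}(z)$ and $f^{-1}(z')$ by base-changing the isomorphism $\phi_z$ along $z'$.

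\begin{proof}[Proof sketch]
Suppose $f$ is trivial over $Z$ with respect to $z \in Z(S)$, with isomorphism $\phi_z = (\psi_z, f)\colon f^{-1}(Z) \xrightarrow{\sim} f^{-1}(z) \times_S Z$ satisfying $\psi_z \circ i_z = \id_{f^{-1}(z)}$. Given another point $z' \in Z(S)$, first base-change $\phi_z$ along $z'\colon S \to Z$ to obtain an isomorphism
$$
\phi_z|_{z'}\colon f^{-1}(z') \xrightarrow{\ \sim\ } f^{-1}(z) \times_S S = f^{-1}(z),
$$
where the left-hand side is $f^{-1}(Z) \times_{f,Z,z'} S$ by definition and the right-hand side uses ${\rm pr}_2 \circ \phi_z = f$. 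Denote this isomorphism by $\theta\colon f^{-1}(z') \xrightarrow{\sim} f^{-1}(z)$. Now define
$$
\phi_{z'} := \bigl( (\theta^{-1} \times_S \id_Z) \circ \phi_z \bigr)\colon f^{-1}(Z) \xrightarrow{\ \sim\ } f^{-1}(z') \times_S Z,
$$
and set $\psi_{z'} := {\rm pr}_1 \circ \phi_{z'}$, so that $\phi_{z'} = (\psi_{z'}, f)$ since $\phi_z$ and $\theta^{-1} \times_S \id_Z$ are both compatible with the projection to $Z$. It remains to check that $\psi_{z'}$ retracts the canonical inclusion $i_{z'}\colon f^{-1}(z') \to f^{-1}(Z)$. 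Unwinding the definitions, $\phi_{z'} \circ i_{z'}$ is the composite $(\theta^{-1} \times_S \id_Z) \circ \phi_z \circ i_{z'}$, and $\phi_z \circ i_{z'}$ is, by construction of $\theta$, the map $f^{-1}(z') \to f^{-1}(z) \times_S Z$ given on the first factor by $\theta$ and on the second by the constant section $z'$; applying $\theta^{-1} \times_S \id_Z$ we get the map with first component $\id_{f^{-1}(z')}$ and second component $z'$, which is precisely the canonical section $f^{-1}(z') \hookrightarrow f^{-1}(z') \times_S Z$ over the point $z'$. Composing with ${\rm pr}_1$ yields $\id_{f^{-1}(z')}$, so $\psi_{z'} \circ i_{z'} = \id$, as required. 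Symmetry of the argument gives triviality with respect to $z'$ implies triviality with respect to $z$, completing the proof.
\end{proof}

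The one point demanding care — and the main obstacle, such as it is — is the identification of $\phi_z \circ i_{z'}$ with the ``constant at $z'$'' section: this requires keeping straight that $i_{z'}$ is the inclusion of the scheme-theoretic fiber $f^{-1}(Z) \times_{f,Z,z'} S$ and that, under $\phi_z$, this corresponds to $\{{\rm pr}_2 = z'\} \subset f^{-1}(z) \times_S Z$, which the base-change definition of $\theta$ handles automatically. No hard input is needed beyond careful bookkeeping with fiber products over $S$.
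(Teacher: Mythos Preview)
Your proof is correct and follows essentially the same approach as the paper: base-change $\phi_z$ along $z'$ to obtain an isomorphism between the fibers $f^{-1}(z')$ and $f^{-1}(z)$ (your $\theta$ is the inverse of the paper's $\psi_{z',z}$), then compose with $\psi_z$ to define $\psi_{z'}$. Your verification of the retraction property is slightly more explicit than the paper's, but the arguments are the same in substance.
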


\begin{proof}
Suppose $\phi_z = (\psi_z, f)$ is given, and suppose $z'\colon S \rightarrow Z$ is any $S$-point of $Z$.  Then $\phi_z^{-1}$ as in (\ref{triangle}) induces an $S$-isomorphism $\psi_{z',z}$ from the pull-back of ${\rm pr}_2$ along $z'$ to the pull-back of $f$ along $z'$, that is,
$$
\psi_{z', z}\colon f^{-1}(z) \overset{\lowsim}{\longrightarrow} f^{-1}(z'). 
$$
Then $\psi_{z'} := \psi_{z', z} \circ \psi_z$ gives a $Z$-isomorphism $\phi_{z'} := (\psi_{z'}, f)$ attached to $z'$, and the assumption that $\psi_z$ is a retraction of $i_z$ implies that $\psi_{z'}$ is likewise a retraction of the canonical morphism $i_{z'}\colon f^{-1}(z') \rightarrow f^{-1}(Z)$.
\end{proof}

When we are working in the category of $k$-varieties (as in the next proposition), we use the same terminology, but it is understood that objects and morphisms are in that category, and it may or may not be possible to upgrade the statements to the category of $k$-schemes, that is, without taking reduced structures.

\begin{prop} \label{strat_triv}
The morphism $m_{w_\bullet}\colon X_\calP(w_\bullet) \rightarrow X_\calP(w_*)$ is trivial over every $\calB$-orbit in its image.
\end{prop}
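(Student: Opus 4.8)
The plan is to prove triviality of $m_{w_\bullet}$ over a fixed $\calB$-orbit $Y_{\calB\calP}(v) \subset X_\calP(w_*)$ by exhibiting an explicit $\calB$-equivariant retraction, using the key structural fact that the base point $v\,e_\calP$ of the orbit is moved around by the group $\mathcal U \cap {}^v\overline{\mathcal U}_\calP$, which (after Proposition~\ref{BP_comp}, choosing $v$ to be the right-$\bf f$-minimal representative of its coset $vW_{\langle{\bf f}\rangle}$) is an affine space acting simply transitively on $Y_{\calB\calP}(v)$. Concretely, let $U_v := \mathcal U \cap {}^v\overline{\mathcal U}_\calP$; by Proposition~\ref{dHL.3.7.4}\eqref{dHL.3.7.4a} the orbit map $u \mapsto u\,v\,e_\calP$ is an isomorphism $U_v \xrightarrow{\sim} Y_{\calB\calP}(v)$, so any point of $Y_{\calB\calP}(v)$ is uniquely $u(y)\,v\,e_\calP$ for $u(y) \in U_v$. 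Then I would define
$$
\phi_{v}\colon m_{w_\bullet}^{-1}\bigl(Y_{\calB\calP}(v)\bigr) \longrightarrow m_{w_\bullet}^{-1}(v\,e_\calP) \times Y_{\calB\calP}(v)
$$
by sending a twisted-product point $(g_1\calP, \dots, g_r\calP)$, lying over $y = m_{w_\bullet}(g_\bullet)$, to $\bigl(u(y)^{-1}\cdot(g_1\calP,\dots,g_r\calP),\, y\bigr)$, where $u(y)^{-1}$ acts on the twisted product via its action on the first factor (which propagates through the other factors by (\ref{r-fold_action})). The inverse is $(x_\bullet, y) \mapsto u(y)\cdot x_\bullet$, so this is an isomorphism over $Y_{\calB\calP}(v)$; and it restricts to the identity on the fiber $m_{w_\bullet}^{-1}(v\,e_\calP)$ since $u(v\,e_\calP) = 1$, giving the required retraction $\psi_v$ of $i_v$.

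The steps, in order, would be: (1) reduce to the case where $v$ is right-$\bf f$-minimal, which is harmless because $Y_{\calB\calP}(v)$ and hence $m_{w_\bullet}^{-1}(Y_{\calB\calP}(v))$ depend only on the coset $vW_{\langle{\bf f}\rangle}$; (2) invoke Proposition~\ref{dHL.3.7.4}\eqref{dHL.3.7.4a} together with the fact that $\overline{\mathcal U}_\calP \to \Flag_\calP$ is an open immersion (Proposition~\ref{BP_comp} and its proof) to get the simply transitive action of $U_v$ on $Y_{\calB\calP}(v)$, and in particular a canonical section $y \mapsto u(y)$ of the orbit map, which is a morphism of $k$-varieties; (3) verify that left multiplication by $U_v$ on the first factor of the twisted product $\overline{\calP w_1\calP}\times\cdots\times\overline{\calP w_r\calP}$ descends to a well-defined action on $X_\calP(w_\bullet)$ commuting with nothing in particular but compatible with $m_{w_\bullet}$ in the sense that $m_{w_\bullet}(u\cdot g_\bullet) = u\cdot m_{w_\bullet}(g_\bullet)$ — this is immediate from the definition of the convolution morphism as descended multiplication; (4) assemble $\phi_v$ and $\psi_v$ as above and check the commuting triangle (\ref{triangle}) and the retraction property, both of which are formal once (2) and (3) are in place; (5) invoke Lemma~\ref{triviality_lem} to conclude that triviality holds over the orbit independently of the chosen base point.

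The main obstacle I anticipate is step (2)/(3): making sure the ``moving the base point'' construction is genuinely algebraic and globally defined on the preimage, not just pointwise. One has to be careful that $u(y)$ depends algebraically on $y \in Y_{\calB\calP}(v)$ — this is where the isomorphism $U_v \xrightarrow{\sim} Y_{\calB\calP}(v)$ of \emph{schemes} (as opposed to a bijection on points) from Proposition~\ref{BP_comp} is essential — and that the resulting action of $U_v$ on the (ind-scheme-theoretic) twisted product $X_\calP(w_\bullet)$ is well-defined after étale sheafification, i.e. compatible with the quotient by $\calP^r$ in (\ref{r-fold_action}). Since $U_v \subset \mathcal U \subset \calB \subset \calP$ acts on the leftmost $\overline{\calP w_1\calP}$ and this commutes with the right $\calP^r$-action by associativity of multiplication, this should go through; but the reduced-structure caveat flagged before the proposition means I should phrase everything in the category of $k$-varieties and not claim more. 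A secondary, purely bookkeeping point is that $m_{w_\bullet}^{-1}(Y_{\calB\calP}(v))$ is locally closed in $X_\calP(w_\bullet)$ (so that the statement even makes sense), which follows because $Y_{\calB\calP}(v)$ is locally closed in $X_\calP(w_*)$ and $m_{w_\bullet}$ is a morphism of finite-type $k$-schemes.
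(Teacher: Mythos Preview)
Your proposal is correct and follows essentially the same approach as the paper: both use the unique factorization $y = u(y)\,v\,e_\calP$ with $u(y) \in \mathcal U \cap {}^v\overline{\mathcal U}_\calP$ (from Proposition~\ref{dHL.3.7.4}\eqref{dHL.3.7.4a} and the open immersion $\overline{\mathcal U}_\calP \hookrightarrow \Flag_\calP$) to define the trivialization $(\calP_1,\dots,\calP_{r-1}, u\,v\calP) \mapsto \bigl((u^{-1}\calP_1,\dots,u^{-1}\calP_{r-1}, v\calP),\, u\,v\calP\bigr)$. The paper's proof is just a terser version of yours, writing the diagonal $u^{-1}$-action directly in the twisted-product coordinates and noting (as you anticipated) that the construction is valid schematically thanks to the schematic decomposition in Proposition~\ref{dHL.3.7.4}\eqref{dHL.3.7.4a}.
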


\begin{proof}
Writing $m := m_{w_\bullet}$, we prove the triviality of the map $m$ over $\mathcal B$-orbits contained in its image. Assume $Y_{\calB \calP}(v) \subset X_\calP(w_*)$.  By Proposition~\ref{BP_comp}, an element $\mathcal P' \in Y_{\calB \calP}(v)$ can be written in the form 
$$
\mathcal P' = {uv}\calP
$$
for a unique element $u \in \mathcal U \cap \, ^v\overline{\mathcal U}_\calP$. 

We can then define an isomorphism
\begin{equation} \label{5.3_eqn}
m^{-1}(Y_{\calB \calP}(v)) ~ \overset{\lowsim}{\longrightarrow} ~ \, m^{-1}(v\mathcal P) \times Y_{\calB \calP}(v)
\end{equation} 
by sending $(\mathcal P_1, \dots, \mathcal P_{r-1}, {uv}\mathcal P)$ to $(\, {u^{-1}}\mathcal P_1, \cdots, {u^{-1}}\mathcal P_{r-1}, {v}\mathcal P) \times {uv}\mathcal P$.  Obviously, the first factor belongs to $m^{-1}(v\mathcal P)$.  

This is a situation where the statement and proof work in the category of $k$-schemes, not just the category of $k$-varieties -- this uses the schematic decomposition proved in Proposition~\ref{dHL.3.7.4}\eqref{dHL.3.7.4a}. But here we are giving all fibers their reduced structure; this makes sense because the scheme-theoretic fiber product over $k$ on the right-hand side of (\ref{5.3_eqn}) is automatically reduced, even when $k$ is not perfect, because $Y_{\calB \calP}(v)$ is schematically an affine space, thanks to Proposition~\ref{dHL.3.7.4}\eqref{dHL.3.7.4b}.
\end{proof}

\section{Paving results for the case \texorpdfstring{$\boldsymbol{\calP = \calB}$}{P=B}} \label{P=B_sec}

\subsection{BN-pair relations and lemmas on retractions}\label{bnpair}

The following statements can be interpreted at the level of $k$- or $\bar{k}$-points, but we will suppress this from the notation. Recall that given $\mathcal B_1 =  {g_1}\mathcal B$,  $\mathcal B_2 = {g_2}\mathcal B$, and $w \in W$, we say the pair $(\mathcal B_1, \mathcal B_2)$ is in relative position $w$ (and we write $\mathcal B_1 \, \overset{w}{\longdash} \,\mathcal B_2$) if and only if $g_1^{-1}g_2 \in \mathcal B w \mathcal B$.
We write
$$
\mathcal B_1 \, \overset{\leq w}{\longdash} \,\mathcal B_2 \,\,\,\,\,\,\,\, \mbox{if and only if} \,\,\,\,\,\,\,\, \mathcal B_1 \, \overset{v}{\longdash} \, \mathcal B_2 \,\,\,\,\mbox{for some $v \leq w$}.
$$
We have
\begin{equation*}
Y_\mathcal B(w) = \left\{ \mathcal B' ~ | ~ \mathcal B \, \overset{w}{\longdash}\, \mathcal B' \right\} \hspace{.25in} \mbox{and} \hspace{.25in} X_\mathcal B(w) = \left\{ \mathcal B' ~ | ~ \mathcal B \, \overset{\leq w}{\longdash} \, \mathcal B' \right\}.
\end{equation*}

The BN-pair relations hold for $v \in W$ and $s \in S_{\aff}$:
\begin{align} \label{BN-pair_eq}
\mathcal B v \mathcal B s \mathcal B &= \begin{cases} \mathcal B vs \mathcal B & \text{if } v < vs, \\
  \mathcal B vs \mathcal B \cup \mathcal B v \mathcal B & \text{if } vs < v.
\end{cases} \\
\notag
\end{align}
Note that for every $v \in W$ and $s \in S_{\aff}$, there is an isomorphism $\{ \mathcal B' ~ | ~ v\mathcal B \,\, \overset{\leq s}{\longdash} \,\mathcal B' \} \cong \mathbb P^1$ and $\{ \mathcal B' ~ | ~ v\mathcal B \, \,\overset{\leq s}{\longdash} \,\mathcal B' \} \subset Y_\mathcal B(v) \cup Y_\mathcal B(vs)$.

\begin{lem}\label{BN_pair_lem}
Suppose $s \in S_{\aff}$ and $v \in W$.
\begin{enumerate}
\item If\, $v < vs$, then $\{ \mathcal B' ~ | ~ v\mathcal B \,\, \overset{\leq s}{\longdash} \, \mathcal B' \} \cap Y_\mathcal B(v) = \{ v\mathcal B \} \cong \mathbb A^0$.
\item If\, $v < vs$, then $\{ \mathcal B' ~ | ~ v \mathcal B \,\, \overset{\leq s}{\longdash} \, \mathcal B' \} \cap Y_\mathcal B(vs) \cong \mathbb A^1$.
\item If\, $vs < v$, then $\{ \mathcal B' ~ | ~ v\mathcal B \,\, \overset{\leq s}{\longdash} \, \mathcal B' \} \cap Y_\mathcal B(v) \cong \mathbb A^1$.
\item If\, $vs < v$, Then $\{ \mathcal B' ~ | ~ v\mathcal B \,\, \overset{\leq s}{\longdash} \, \mathcal B' \} \cap Y_\mathcal B(vs) = \{ {vs}\mathcal B\} \cong \mathbb A^0$.
\end{enumerate}
\end{lem}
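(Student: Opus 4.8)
The plan is to reduce the whole lemma to its first assertion (i), together with the two facts recorded just before the statement: that $L := \{\mathcal B' \mid v\mathcal B \overset{\leq s}{\longdash} \mathcal B'\} \cong \mathbb P^1$, and that $L \subseteq Y_\mathcal B(v) \cup Y_\mathcal B(vs)$. In all four cases $v \neq vs$, so the Bruhat cells $Y_\mathcal B(v)$ and $Y_\mathcal B(vs)$ are disjoint, and hence $L$ is the disjoint union of the locally closed subvarieties $L \cap Y_\mathcal B(v)$ and $L \cap Y_\mathcal B(vs)$. Once I know that one of these two pieces is a single $k$-rational point, the other is the complement of a point in $\mathbb P^1$, hence isomorphic to $\mathbb A^1$; so it suffices to identify, in each case, which piece is the point and which is the $\mathbb A^1$.

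First I would prove (i). Suppose $v < vs$ and $\mathcal B' = g\mathcal B \in L \cap Y_\mathcal B(v)$, so that $v^{-1}g \in \mathcal B \cup \mathcal B s\mathcal B$ and $g \in \mathcal B v\mathcal B$. If $v^{-1}g \in \mathcal B s \mathcal B$, then $g \in v\mathcal B s\mathcal B \subseteq \mathcal B v\mathcal B s\mathcal B$, which by the BN-pair relation \eqref{BN-pair_eq} (using $v < vs$) equals $\mathcal B vs\mathcal B$; but $\mathcal B v\mathcal B \cap \mathcal B vs\mathcal B = \emptyset$ since $v \neq vs$, a contradiction. Hence $v^{-1}g \in \mathcal B$, i.e.\ $g\mathcal B = v\mathcal B$, so $L \cap Y_\mathcal B(v) = \{v\mathcal B\} \cong \mathbb A^0$. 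Assertion (ii) then follows at once: $L \cap Y_\mathcal B(vs) = L \setminus \{v\mathcal B\} \cong \mathbb A^1$.

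For the case $vs < v$ the key observation is that $L$ admits exactly the same description with $vs$ in place of $v$. Set $\mathcal P_s := \mathcal B \cup \mathcal B s \mathcal B$; by the BN-pair relations this is a subgroup of $LG$ (one has $(\mathcal B s\mathcal B)(\mathcal B s\mathcal B) = \mathcal B \cup \mathcal B s\mathcal B$, and $\mathcal P_s$ is visibly stable under inversion), and $L = \{g\mathcal B \mid g \in v\mathcal P_s\}$. Since the lift $\dot s$ lies in $\mathcal P_s$, we get $vs\,\mathcal P_s = v\,\mathcal P_s$, hence $\{\mathcal B' \mid v\mathcal B \overset{\leq s}{\longdash} \mathcal B'\} = \{\mathcal B' \mid vs\mathcal B \overset{\leq s}{\longdash} \mathcal B'\} = L$. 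Now $vs < v = (vs)s$, so I may apply assertion (i), already proved, with $vs$ playing the role of $v$: this gives $L \cap Y_\mathcal B(vs) = \{vs\mathcal B\} \cong \mathbb A^0$, which is (iv), and then (iii) follows as before, $L \cap Y_\mathcal B(v) = L \setminus \{vs\mathcal B\} \cong \mathbb A^1$.

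I expect the only point requiring care to be the identity $v\mathcal P_s = vs\,\mathcal P_s$ together with the realization that it lets one deduce (iii) and (iv) from (i) and (ii) by symmetry; the rest is bookkeeping with BN-pair relations and the two cited facts about $L$. If one prefers to avoid the parahoric-coset reformulation, the case $vs < v$ can instead be handled by a direct rank-one computation: parametrize $\{\mathcal B' \mid v\mathcal B \overset{s}{\longdash} \mathcal B'\}$ as $\{v\,u_a(x)\dot s\mathcal B \mid x \in k\}$, where $a$ is the affine simple root attached to $s$ and $u_a$ the corresponding affine root subgroup, write $v\,u_a(x)\,v^{-1} = u_{v(a)}(cx)$ with $c \in k^\times$, note that $b := -v(a) > 0$ because $vs < v$, and use the $\SL_2$-identity $u_{-b}(y) = u_b(y^{-1})\,\dot s_b\,\alpha_b^\vee(y)\,u_b(y^{-1})$ (together with $\dot s_b\, v \dot s \in v\,T(\mathcal O)$) to check that $v\,u_a(x)\dot s\mathcal B \in Y_\mathcal B(v)$ for $x \neq 0$ and $= vs\mathcal B \in Y_\mathcal B(vs)$ for $x = 0$. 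This is routine but more computational, and the argument via $\mathcal P_s$ makes it unnecessary.
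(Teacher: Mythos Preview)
Your argument is correct. The paper's own proof is a one-line appeal to retractions in the building onto the standard apartment (citing \cite[Section~6]{HKM}); you instead give a self-contained group-theoretic argument. The key ideas are different: you exploit the two facts recorded just before the lemma ($L \cong \mathbb P^1$ and $L \subset Y_\mathcal B(v) \cup Y_\mathcal B(vs)$), prove (i) directly from the BN-pair relation (\ref{BN-pair_eq}), and then use the coset symmetry $v\mathcal P_s = vs\,\mathcal P_s$ to reduce (iii)--(iv) to (i)--(ii). Your approach buys portability: it needs no building theory and works verbatim over~$\bbZ$, which is exactly what the paper eventually wants (cf.\ the remark at the end of the proof of Theorem~\ref{CvdHS_gen}, and Remarks~\ref{A1_rem}, \ref{Gm_rem}). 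The paper's approach is shorter to state but defers the content to an external reference. Your optional rank-one computation at the end is also in the spirit of Remark~\ref{A1_rem}.
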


\begin{proof} This is obvious from properties of the retraction map from the building associated to $G$ onto the apartment corresponding to $T$, with respect to an alcove in that apartment. A reference for how such retractions ``work'' is \cite[Section~6]{HKM}.
\end{proof}

In a similar way, we get an analogous lemma.

\begin{lem}\label{strict-BN_pair_lem}
Suppose $s \in S_{\aff}$ and $v \in W$.
\begin{enumerate}
\item\label{sBNpl-1} If\, $v < vs$, then $\{ \mathcal B' ~ | ~ v\mathcal B \,\, \overset{s}{\longdash} \, \mathcal B' \} \cap Y_\mathcal B(v) = \emptyset$.
\item\label{sBNpl-2} If\, $v < vs$, then $\{ \mathcal B' ~ | ~ v \mathcal B \,\, \overset{s}{\longdash} \, \mathcal B' \} \cap Y_\mathcal B(vs) \cong \mathbb A^1$.
\item\label{sBNpl-3} If\, $vs < v$, then $\{ \mathcal B' ~ | ~ v\mathcal B \,\, \overset{s}{\longdash} \, \mathcal B' \} \cap Y_\mathcal B(v) \cong \mathbb A^1- \mathbb A^0$.
\item\label{sBNpl-4} If\, $vs < v$, then $\{ \mathcal B' ~ | ~ v\mathcal B \,\, \overset{s}{\longdash} \, \mathcal B' \} \cap Y_\mathcal B(vs) = \{ {vs}\mathcal B\} \cong \mathbb A^0$.
\end{enumerate}
\end{lem}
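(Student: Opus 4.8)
The plan is to deduce Lemma~\ref{strict-BN_pair_lem} from Lemma~\ref{BN_pair_lem} by isolating precisely what changes when the relative-position condition ``$\overset{\leq s}{\longdash}$'' is tightened to the strict condition ``$\overset{s}{\longdash}$''. First I would record the elementary identity
$$
\{\, \mathcal B' ~|~ v\mathcal B \,\overset{s}{\longdash}\, \mathcal B' \,\} ~=~ \{\, \mathcal B' ~|~ v\mathcal B \,\overset{\leq s}{\longdash}\, \mathcal B' \,\} \setminus \{\, v\mathcal B \,\},
$$
which holds because the only relative positions $\leq s$ are $e$ and $s$, and $v\mathcal B \,\overset{e}{\longdash}\, \mathcal B'$ forces $\mathcal B' = v\mathcal B$. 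Equivalently, the left-hand side is the translate $v\cdot Y_\mathcal B(s) \cong \mathbb A^1$ sitting inside its closure $v\cdot X_\mathcal B(s) \cong \mathbb P^1$, and the point being deleted is the unique chamber of that $\mathbb P^1$ in relative position $e$, namely $v\mathcal B$.

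The next step is to combine this identity with two trivial facts about Bruhat cells: $v\mathcal B \in Y_\mathcal B(v)$, whereas $v\mathcal B \notin Y_\mathcal B(vs)$, since $v \neq vs$ in $W$ and the cells $Y_\mathcal B(w)$ for distinct $w$ are disjoint (by \eqref{BT_decomp}). Intersecting the displayed identity with $Y_\mathcal B(vs)$ and invoking parts~(ii) and~(iv) of Lemma~\ref{BN_pair_lem} then gives parts~\eqref{sBNpl-2} and~\eqref{sBNpl-4} immediately, because removing the point $v\mathcal B \notin Y_\mathcal B(vs)$ changes nothing. Intersecting with $Y_\mathcal B(v)$ instead: when $v < vs$, part~(i) of Lemma~\ref{BN_pair_lem} gives the singleton $\{v\mathcal B\}$, so deleting $v\mathcal B$ leaves $\emptyset$, which is part~\eqref{sBNpl-1}; when $vs < v$, part~(iii) of Lemma~\ref{BN_pair_lem} gives a copy of $\mathbb A^1$ that contains $v\mathcal B$, so deleting $v\mathcal B$ leaves $\mathbb A^1 - \mathbb A^0$, which is part~\eqref{sBNpl-3}.

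Should a reader prefer a self-contained argument mirroring the proof of Lemma~\ref{BN_pair_lem}, one can argue directly via the retraction of the building onto the standard apartment centered at the alcove $v\,{\bf a}$ (as in \cite[Section~6]{HKM}): the set of chambers $\mathcal B'$ with $v\mathcal B \,\overset{\leq s}{\longdash}\, \mathcal B'$ forms a projective line on which both distinguished chambers $v\mathcal B$ and $vs\mathcal B$ are visible, and whether the fixed chamber $v\mathcal B$ lies on the affine-space part of this line lying over the alcove $vs\,{\bf a}$ or is its complementary point is dictated by the sign of $\ell(vs) - \ell(v)$.

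I do not expect a genuine obstacle here: the geometric content lives entirely in Lemma~\ref{BN_pair_lem}, and the only thing requiring care is the bookkeeping of which of the chambers $v\mathcal B$, $vs\mathcal B$ lies in which Bruhat cell --- which is pinned down by $v \neq vs$ together with the length comparison of $\ell(v)$ and $\ell(vs)$.
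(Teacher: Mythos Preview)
Your proposal is correct. The paper's own proof is essentially one sentence---``In a similar way, we get an analogous lemma''---referring back to the retraction argument used for Lemma~\ref{BN_pair_lem}. Your primary approach is slightly different: rather than redoing the retraction argument, you deduce the strict version directly from Lemma~\ref{BN_pair_lem} via the set-theoretic identity
$$
\{\, \mathcal B' \mid v\mathcal B \,\overset{s}{\longdash}\, \mathcal B' \,\} = \{\, \mathcal B' \mid v\mathcal B \,\overset{\leq s}{\longdash}\, \mathcal B' \,\} \setminus \{v\mathcal B\},
$$
and then track whether $v\mathcal B$ lies in the relevant Bruhat cell. This is clean and makes the dependence on Lemma~\ref{BN_pair_lem} explicit, whereas the paper's approach keeps the two lemmas logically parallel but independent. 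Since you also sketch the retraction argument as an alternative, you have in fact covered both routes; either is perfectly adequate here.
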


\subsection{Proof of Theorem~\ref{thmC}} \label{thmC_proof}

Since $\mathcal B$ is understood, we will write $Y(w_\bullet)$ for $Y_\mathcal B(w_\bullet)$ and $X(w_\bullet)$ for $X_\mathcal B(w_\bullet)$ in what follows. Let $s_\bullet = (s_1, \dots, s_r) \in \mathcal S^r$. There is no requirement here that $s_1 \cdots s_r$ be reduced. Recall the subvariety $X(s_\bullet) \subset (\mathcal G/\mathcal B)^r$ which consists of the $r$-tuples $(\mathcal B_1, \dots, \mathcal B_r)$ such that $\mathcal B_{i-1} \overset{\leq s_i}{\longdash} \mathcal B_i$ for all $i = 1, \dots, r$ (with the convention that $\mathcal B_0 = \mathcal B$). 

We are going to prove the paving by affine spaces of the fibers of the morphism
$$
m\colon X(s_\bullet) \longrightarrow X(s_*) \subset \mathcal G/\mathcal B, \quad 
(\mathcal B_1, \dots, \mathcal B_r) \longmapsto \mathcal B_r.
$$
We proceed by induction on $r$. The case $r = 1$ is trivial, 
so we assume $r > 1$ and that the theorem holds for $r-1$. Let $s'_* := s_1 * \cdots * s_{r-1}$. Let $s'_\bullet = (s_1, \dots, s_{r-1})$. By our induction hypothesis, the theorem holds for 
$$
m'\colon X(s'_\bullet) \longrightarrow  X(s'_*), \quad 
(\mathcal B_1, \dots, \mathcal B_{r-1}) \longmapsto \mathcal B_{r-1}.
$$
Now suppose $v \leq s_*$, so that $v\mathcal B \in \Im(m)$. For an element $(\mathcal B_1, \dots, \mathcal B_{r-1}, v\mathcal B) \in m^{-1}(v\mathcal B)$, we have 
$$
\mathcal B \, \overset{v}{\longdash} \, v\mathcal B \, \,\overset{\leq s_r}{\longdash} \, \mathcal B_{r-1}.$$
It follows from the BN-pair relations that $\mathcal B_{r-1} \in Y(v) \cup Y(vs_r)$. We consider the map
\begin{align*}
\xi\colon m^{-1}(v\mathcal B) &\longrightarrow Y(v) \cup Y(vs_r) \\
(\mathcal B_1, \dots, \mathcal B_{r-1}, v\mathcal B) &\longmapsto \mathcal B_{r-1}.
\end{align*}
We will examine the subsets $\Im(\xi) \cap Y(v)$ and $\Im(\xi) \cap Y(vs_r)$. We will show that
\begin{enumerate}
\item these subsets are affine spaces (either empty, a point, or $\mathbb A^1$); one of them, denoted by $\mathbb A_1$, is closed in $\Im(\xi)$, and the other, denoted by $\mathbb A_2$, is nonempty, open, and dense in $\Im(\xi)$;
\item if $\mathbb A_i \neq \emptyset$, then $\mathbb A_i$ belongs to $\Im(m')$; furthermore, $\xi^{-1}(\mathbb A_i) \cong m'^{-1}(\mathbb A_i)$ under the obvious identification, and $\xi\colon \xi^{-1}(\mathbb A_i) \rightarrow \mathbb A_i$ corresponds to the morphism $m'\colon m'^{-1}(\mathbb A_i) \rightarrow \mathbb A_i$.
\end{enumerate}
These facts are enough to prove Theorem~\ref{thmC}. Indeed, applying $\xi^{-1}$ to the decomposition
$$
\Im(\xi) = \mathbb A_1 \cup \mathbb A_2
$$
and using \eqref{sBNpl-2} gives us a decomposition
$$
m^{-1}(v\mathcal B) = \xi^{-1}(\mathbb A_1) \cup \xi^{-1}(\mathbb A_2) = m'^{-1}(\mathbb A_1) \cup m'^{-1}(\mathbb A_2), 
$$
where the first set
is closed and the second is nonempty and open. By the induction hypothesis, the fibers of $m'$ are paved by affine spaces. Since $\mathbb A_i$ is contained in a $\mathcal B$-orbit, we see $m'$ is trivial over each $\mathbb A_i$ by Proposition~\ref{strat_triv}, and hence each $m'^{-1}(\mathbb A_i)$ is paved by affine spaces. Thus $m^{-1}(v\mathcal B)$ is paved by affine spaces. 

To verify  properties \eqref{sBNpl-1} and~\eqref{sBNpl-2}, we need to consider various cases. We start with two cases which arise from the following standard lemma about the Bruhat order.

\begin{lem} \label{trick} Let $(W,S)$ be a Coxeter group and $x,y \in W$ and $s \in S$. Then $x \leq y$ implies $x \leq ys$ or $xs \leq ys$ \textup{(}or both\textup{)}.  
\end{lem}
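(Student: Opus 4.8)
The plan is to argue by induction on the length $\ell(y)$, using the standard combinatorial characterization of the Bruhat order via subwords (the subword property / lifting property). Recall the lifting property in the form most convenient here: if $x \leq y$ and $ys < y$, then $xs \leq y$ as well (one can pass to a reduced word for $y$ ending in $s$, and since $x \leq y$ there is a subword of this reduced word representing $x$; multiplying on the right by $s$ and simplifying shows $xs \leq y$). I would first dispose of the easy case $ys > y$: then I claim $x \leq ys$. Indeed if $xs > x$ this is immediate from the lifting property ($x \leq y < ys$, or more directly $x \leq y \leq ys$); if $xs < x$, then $xs \leq x \leq y < ys$, so again $x \leq ys$. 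So the only substantive case is $ys < y$.

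So suppose $ys < y$, and fix a reduced expression $y = s_{i_1} \cdots s_{i_\ell}$ with $s_{i_\ell} = s$ (possible since $ys<y$). Since $x \leq y$, there is a subexpression $s_{i_{j_1}} \cdots s_{i_{j_m}}$ of this word (with $j_1 < \cdots < j_m$) that is a reduced expression for $x$. There are two cases according to whether the final letter $s_{i_\ell} = s$ is used in this subexpression or not. If it is \emph{not} used, then the same subexpression is a subword of the reduced word $s_{i_1}\cdots s_{i_{\ell-1}}$ for $ys$, so $x \leq ys$, and we are done. If it \emph{is} used (so $j_m = \ell$, i.e.\ $x = (s_{i_{j_1}} \cdots s_{i_{j_{m-1}}})\, s$ with $s_{i_{j_1}} \cdots s_{i_{j_{m-1}}}$ a reduced word for $xs$), then that initial part $s_{i_{j_1}} \cdots s_{i_{j_{m-1}}}$ is a subword of $s_{i_1}\cdots s_{i_{\ell-1}}$, which is a reduced word for $ys$; hence $xs \leq ys$, and again we are done.

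I do not expect any real obstacle here: the statement is a routine consequence of the subword property of the Bruhat order, and the only thing to be careful about is organizing the case split (the two cases $ys>y$ versus $ys<y$, and within the latter whether the trailing $s$ appears in the chosen subexpression). The conclusion "$x \leq ys$ or $xs \leq ys$" falls out exactly along that second dichotomy. Alternatively, and perhaps even more cleanly, one can invoke the lifting property directly: if $ys > y$ then $x \leq y \leq ys$; if $ys < y$ then $x \leq y$ together with $ys < y$ gives, by the Z-property / lifting property, that either $x \leq ys$ (when $xs > x$) or else $xs \leq ys$ (when $xs < x$, using $xs \leq x \leq y$ and then lifting down). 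Either formulation is short enough that I would simply cite \cite[Chapter~2]{BB05} or the analogous statement in the references already in use and give the one-paragraph argument above.
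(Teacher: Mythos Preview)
Your proposal is correct and takes essentially the same approach as the paper. Both arguments reduce to the case $ys < y$, choose a reduced expression for $y$ ending in $s$, and split according to whether a subword representing $x$ uses the final letter $s$; the paper's version is simply a terser rendering of your second paragraph.
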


\begin{proof}
This argument can be found in the literature (see \textit{e.g.} \cite[Proposition~5.9]{Hum}), but we give a short proof for the convenience of the reader.  Suppose $x \leq y$.  We may assume that $ys < y$.  Then we may write $x = \widetilde{ys}\,\widetilde{s}$, where $\widetilde{ys}$ (resp., $\widetilde{s}$) is a subword of any reduced word giving $ys$ (resp., of $s$).  If $x = \widetilde{ys}$, then $x \leq ys$. If $x = \widetilde{ys} s$, then $xs = \widetilde{ys} \leq ys$.
\end{proof}

Recall $v \leq s_*$ by assumption. The two cases we need to consider are as follows: 

\begin{enumerate}[label={\rm Case \Roman*:}, ref=\Roman*, wide,labelindent=0pt,leftmargin=!]
\item\label{CaseI} 
$s'_* < s'_* s_r$, so that $s_* = s'_* s_r$.  Thus by Lemma~\ref{trick}, $v \leq s'_*$ or $v s_r \leq s'_*$.

\item\label{CaseII}
$s'_* s_r < s'_*$, so that $s_* = s'_*$.  Thus $v \leq s'_*$.
\end{enumerate}

We will break each of these into subcases, depending on whether $v < vs_r$ or $vs_r < v$. We then consider further subcases depending on which of $v$ or $vs_r$ precedes $s'_*$ in the Bruhat order. 

\begin{enumerate}[label={\rm Case I.\arabic*:}, ref=I.\arabic*, wide,labelindent=0pt,leftmargin=!]
\item\label{CaseI1} 
$v < vs_r$. So $v \leq s'_*$ is automatic. There are two subcases:
\begin{enumerate}[label={\rm I.1\alph*:}, ref=I.1\alph*,leftmargin=36pt]
\item\label{CaseI1a}  $v < vs_r \leq s'_*$.
\item\label{CaseI1b}  $v \leq s'_*$ but $vs_r \nleq s'_*$.
\end{enumerate}
\item\label{CaseI2} $vs_r < v$. So $vs_r \leq s'_*$ is automatic. There are two subcases:
\begin{enumerate}[label={\rm I.2\alph*:}, ref=I.2\alph*,leftmargin=36pt]
\item\label{CaseI2a} $vs_r < v \leq s'_*$.
\item\label{CaseI2b} $vs_r \leq s'_*$ but $v \nleq s'_*$.
\end{enumerate}
\end{enumerate}
\begin{enumerate}[label={\rm Case II.\arabic*:}, ref=II.\arabic*, wide,labelindent=0pt,leftmargin=!]
\item\label{CaseII1} 
$v < vs_r$.  As $v \leq s'_*$ is automatic, there are two subcases:
\begin{enumerate}[label={\rm II.1\alph*:}, ref=II.1\alph*,leftmargin=36pt]
\item\label{CaseII1a}  $v < vs_r \leq s'_*$.
\item\label{CaseII1b}  $v \leq s'_*$ but $vs_r \nleq s'_*$.
\end{enumerate}
\item\label{CaseII2} 
  $vs_r < v$. Here $vs_r \leq s'_*$ and $v \leq s'_*$, so there are no further subcases.
\end{enumerate}

Consider any element $(\mathcal B_1, \dots, \mathcal B_{r-1}, v\mathcal B)$ in $m^{-1}(v\mathcal B)$.  As already noted above, we have $\mathcal B \, \overset{v}{\longdash} \,  v\mathcal B \, \overset{\leq s_r}{\longdash} \, \mathcal B_{r-1}$.  Then Lemma~\ref{BN_pair_lem} tells us the shape of $\Im(\xi) \cap Y(v)$ and $\Im(\xi) \cap Y(vs_r)$ in all the cases enumerated above.  We record the results in the following table: 

\bigskip

{\small\sf
  \begin{center}
    {\renewcommand{\arraystretch}{1.1}
\begin{tabular}{|c|c|c|}
\hline
Case & $\Im(\xi) \cap Y(v)$ & $\Im(\xi) \cap Y(vs_r)$ \\
\hline \hline

\ref{CaseI1a} & $\mathbb A^0$ & $\mathbb A^1$ \\ \hline
\ref{CaseI1b} & $\mathbb A^0$ & $\emptyset$ \\ \hline
\ref{CaseI2a} & $\mathbb A^1$ & $\mathbb A^0$ \\ \hline
\ref{CaseI2b} & $\emptyset$ & $\mathbb A^0$ \\ \hline
\ref{CaseII1a} & $\mathbb A^0$ & $\mathbb A^1$ \\ \hline
\ref{CaseII1b} & $\mathbb A^0$ & $\emptyset$ \\ \hline
\ref{CaseII2} & $\mathbb A^1$ & $\mathbb A^0$ \\ \hline
\end{tabular}}
\end{center}}
\vskip.3cm

In each case it is clear which piece should be labeled $\mathbb A_1$ or $\mathbb A_2$. This proves the main part of \eqref{sBNpl-1} and~\eqref{sBNpl-2}; the other assertions are clear. This completes the proof of Theorem~\ref{thmC}.
\qed

\begin{rmk}\label{A1_rem} Each $\mathbb A^1$ appearing in the table may be identified with a suitable affine root group $U_{\alpha +n}$, the $k$-group with $k$-points
$$
U_{\alpha + n}(k) = \{ u_\alpha(x t^n) \, | \, x \in k\},
$$
where $u_\alpha \colon \mathbb G_a \rightarrow G$ is the root homomorphism corresponding to the root $\alpha$. For example, consider  Case~\ref{CaseI1a}. Then $\Im(\xi) \cap Y(vs_r)$ is $\{\calB_{r-1} \, | \, v\calB \overset{s_r}{\longdash} \calB_{r-1}\}$. Each such $\calB_{r-1}$ can be expressed as $\calB_{r-1} = {v u s_r}\calB$ for a unique $u \in \mathcal U \cap \, ^{s_r}\overline{\mathcal U}_\calB$.  Now use Proposition~\ref{dHL.3.7.4}\eqref{dHL.3.7.4b}.
\end{rmk}

\subsection{Proof Theorem~\ref{thmA} in a special case} \label{ThmA_spec_case_subsec}

We will now prove Theorem~\ref{thmA} in the case where $\calP=\calB$ and $w_i = s_i$ is a simple reflection for all $1 \leq i \leq r$. The argument is by induction on $r$, as in the previous subsection.  We consider the analogues $p$ and $p'$ of the morphisms $m$ and $m'$
\begin{align*}
p\colon Y(s_\bullet) &\longrightarrow X(s_*), \quad (\calB_1, \dots, \calB_{r-1}, \calB_r) \longmapsto \calB_{r},  \\
p'\colon Y(s'_\bullet) &\longrightarrow X(s'_*),\quad (\calB_1, \dots, \calB_{r-2}, \calB_{r-1}) \longmapsto \calB_{r-1}
\end{align*}
and for $v\calB$ in the image of $p$, we consider the map
\begin{align*}
\xi^\circ\colon p^{-1}(v\mathcal B) &\longrightarrow Y(v) \cup Y(vs_r) \\
(\mathcal B_1, \dots, \mathcal B_{r-1}, v\mathcal B) &\longmapsto \mathcal B_{r-1}.
\end{align*}
The locally triviality of $p$ over $\calB$-orbits in its image still holds, and similarly for $p'$ (see the proof of Proposition~\ref{strat_triv}), and it suffices to establish the analogues of \eqref{sBNpl-1} and~\eqref{sBNpl-2} above.  We consider the same cases as above, and we list the possibilities for $\Im(\xi^\circ) \cap Y(v)$ and $\Im(\xi^\circ) \cap Y(vs_r)$ in the table below, determined in each case with the help of Lemma~\ref{strict-BN_pair_lem}.

\bigskip

{\small\sf
\begin{center}   {\renewcommand{\arraystretch}{1.1}
\begin{tabular}{|c|c|c|}
\hline
Case & $\Im(\xi^\circ) \cap Y(v)$ & $\Im(\xi^\circ) \cap Y(vs_r)$ \\
\hline \hline

\ref{CaseI1a} & $\emptyset$ & \mbox{$\mathbb A^1$ or $\emptyset$} \\ \hline
\ref{CaseI1b} & $\emptyset$ & $\emptyset$ \\ \hline
\ref{CaseI2a} & \mbox{$\mathbb A^1 - \mathbb A^0$ ~or~ $\emptyset$} & \mbox{$\mathbb A^0$ or $\emptyset$} \\ \hline
\ref{CaseI2b} & $\emptyset$ & \mbox{$\mathbb A^0$ or $\emptyset$} \\ \hline
\ref{CaseII1a} & $\emptyset$ & \mbox{$\mathbb A^1$ or $\emptyset$} \\ \hline
\ref{CaseII1b} & $\emptyset$ & $\emptyset$ \\ \hline
\ref{CaseII2} &  \mbox{$\mathbb A^1 - \mathbb A^0$ ~or~ $\emptyset$} & \mbox{$\mathbb A^0$ or $\emptyset$} \\ \hline
\end{tabular}}
\end{center}}
\vskip.3cm
Let us explain the meaning of entries such as ``$\mbox{$\mathbb A^1 - \mathbb A^0$ ~or~ $\emptyset$}$,'' for example in the entry in Case~\ref{CaseI2a} for $\Im(\xi^\circ) \cap Y(v)$.  Note that $v \leq s'_*$ implies that $Y(v) \subset \Im(m')$, but $Y(v) \subset \Im(p')$ is not automatic. However, since $p'$ is $\calB$-equivariant, we have either $Y(v) \cap \Im(p') = \emptyset$ or $Y(v) \subset \Im(p')$.  If $Y(v) \cap \Im(p') = \emptyset$, the table entry is $\emptyset$. If $Y(v) \subset \Im(p')$, the intersection $\Im(\xi^\circ) \cap Y(v)$ is precisely the part of $Y(v)$ which is exactly of relative position $s_r$ from $v\calB$, and this identifies with $\mathbb A^1 - \mathbb A^0$ in the case where $vs_r < v$.

The analogues of  \eqref{sBNpl-1}, \eqref{sBNpl-2} above hold, except that here, both $\mathbb A_1$ and $\mathbb A_2$ can be empty, and when nonempty the larger subset can be either $\mathbb A^0$, $\mathbb A^1 - \mathbb A^0$, or $\mathbb A^1$. The morphism $p'$ is trivial over every $\calB$-orbit in its image (\textit{cf.} Proposition~\ref{strat_triv}), and by induction the nonempty fibers of $p'$ are paved by finite products of copies of $\mathbb A^1$ and $\mathbb A^1- \mathbb A^0$ (use Lemma~\ref{triviality_lem}). (Here, as in the end of the proof of Proposition~\ref{strat_triv}, we are implicitly using that the scheme-theoretic fiber product over $k$ of a reduced $k$-scheme with $X$ is still reduced, even if $k$ is not perfect, if $X$ is a $k$-scheme such as $\mathbb A^1$, $\mathbb A^1 - \mathbb A^0$, or $\mathbb A^0$.) Therefore, the fibers of $p$ also have the desired property. This proves Theorem~\ref{thmA} in the special case where $\calP = \calB$ and each $w_i$ is a simple reflection~$s_i$. \qed

\begin{rmk} \label{Gm_rem}
As in Remark~\ref{A1_rem}, each $\mathbb A^1$ in the table may be identified with an affine root group $U_{\alpha + n}$, and each $\mathbb A^ 1- \mathbb A^0$ may be identified with a suitable variety of nonidentity elements $U^*_{\alpha + n}$.  For example, consider Case~\ref{CaseI2a}.  Then $\Im(\xi^\circ) \cap Y(v)$ is $\{\calB_{r-1} \, | \, v\calB \overset{s_r}{\longdash} \calB_{r-1}\} \cap Y(v)$.  We may write such $\calB_{r-1}$ as 
$$
\calB_{r-1} = vus_r\calB
$$
for a unique $u \in \mathcal U \cap \,^{s_r}\overline{\mathcal U}_\calB$ such that $u \neq e$. Now use Proposition~\ref{dHL.3.7.4}\eqref{dHL.3.7.4b}.
\end{rmk}

\begin{rmk} \label{s,s_eg}
In Cases~\ref{CaseI2a} and~\ref{CaseII2}, the $\mathbb A^0$ piece is in the closure of the $\mathbb A^1 - \mathbb A^0$ piece, and it is tempting to consider the union of these as $\mathbb A^1$.  Indeed, if one ignores the possibility of $\emptyset$ in Cases~\ref{CaseI2a} and~\ref{CaseII2}, the table seems to show that in every case $\Im(\xi^\circ)$ is an affine space ($\emptyset$, $\mathbb A^0$, or $\mathbb A^1$), and one could ask whether the argument does not in fact prove (by induction again) that every fiber of $p$ is paved by affine spaces.  However, one cannot ignore the empty set, and in fact in Case~\ref{CaseII2} it is possible to have $\Im(\xi^\circ) \cap Y(v) = \mathbb A^1 - \mathbb A^0$ while $\Im(\xi^\circ) \cap Y(vs_r) = \emptyset$. Letting $s \in S_{\aff}$, this happens for $s_\bullet = (s_1, s_2) = (s,s)$ and $v = s_2= s$.  This situation is reflected by the quadratic relation in the Iwahori--Hecke algebra $T_s * T_s = (q-1)T_s + qT_1$. In addition, even in a special situation where $\Im(\xi^\circ)$ is always an affine space, the affine space paving would remain elusive, as it is not clear that $p'$ would be trivial over all of $\Im(\xi^\circ)$ whenever it is not contained in a single $\calB$-orbit.
\end{rmk}

\begin{rmk} \label{KLM_eg}
Remark~\ref{s,s_eg} ``explains'' why we cannot hope to improve Theorem~\ref{thmA} to assert that all fibers of $Y_\calP(w_\bullet) \rightarrow X_\calP(w_*)$ are paved by {\em affine spaces}. For a concrete example related to the affine Grassmannian $\Gr_G = LG/L^+P_{\bf 0}$ over a finite field $k = \mathbb F_q$, take $G = {\rm SO}(5)$, and let  
$$
\mu_1 = \mu_2 = \mu_3 = \alpha^\vee_1 + \alpha^\vee_2 = (1,1),
$$
where the $\alpha^\vee_i$ are the two simple coroots of $G$. Here we use notation following the conventions of \cite{Bou}.  In \cite[Section~8.5]{KLM}, it is shown that the Hecke algebra structure constant $c^0_{\mu_\bullet}(q)$ (the coefficient of the unit element in the product $1_{K t^{\mu_1}K} * 1_{K t^{\mu_2}K} * 1_{K t^{\mu_3}K}$ for $K = L^+P_{\bf 0}({\mathbb F}_q)$) satisfies $c^0_{\mu_\bullet}(q) = q^5 - q$.  This shows that the fiber over the base point $e_0$ of $Y_{L^+P_{\bf 0}}(\mu_\bullet) \rightarrow X_{L^+P_{\bf 0}}(|\mu_\bullet|)$ cannot be paved by affine spaces over $\mathbb F_q$.
\end{rmk}

\section{Proof of Theorem~\ref{thmA}} \label{thmA_sec}

\subsection{Schubert cells in $\boldsymbol{\Flag_\calB}$ as convolution spaces}  

If $\tau s_1 \cdots s_r = w$ is a reduced expression, we sometimes write $Y(\tau s_1 \cdots s_r)$ for $Y(w)$.  This Schubert cell has the following well-known moduli description of its $k$-points.  

\begin{lem}  \label{unwind} Fix the reduced expression $w = \tau s_1 \cdots s_r$ as above.
\begin{a-enumerate}
\item Giving a $k$-point of\, $Y(w)$ is equivalent to giving a $k$-point of $\tau^{-1}Y(w)$, which is equivalent to giving a sequence of Iwahori subgroups $(\calB_0, \calB_1, \dots, \calB_r)$ such that
$$
\calB =: \calB_0\, \overset{s_1}{\longdash} \, \calB_1 \, \overset{s_2}{\longdash} \, \calB_2 \, \overset{s_3}{\longdash} \,\cdots \, \overset{s_r}{\longdash} \,\calB_r.
$$
\item For any element $y \in LG(k)$, giving a $k$-point of $y^{-1}Y(w)$ is equivalent to giving a sequence of Iwahori subgroups $(\calB_0, \calB_1, \cdots, \calB_r)$ such that
$$
^{y^{-1}}\calB =: \calB_0\, \overset{s_1}{\longdash} \, \calB_1 \, \overset{s_2}{\longdash} \, \calB_2 \, \overset{s_3}{\longdash} \,\cdots \, \overset{s_r}{\longdash} \,\calB_r.
$$
\end{a-enumerate}
\end{lem}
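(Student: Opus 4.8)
The plan is to reduce both parts to a single statement about chains of Iwahori subgroups, proved by induction on $r$. Throughout, identify $\Flag_\calB(k)$ with the set of Iwahori subgroups of $LG(k)$ via $g\calB\mapsto{}^{g}\calB$; this is a bijection because an Iwahori subgroup is self-normalizing, and under it $g_0\calB$ and $g'\calB$ are in relative position $v$ precisely when $g_0^{-1}g'\in\calB v\calB$. The statement to prove is: \emph{for any reduced word $s_1\cdots s_r$ in $W_{\aff}$ and any Iwahori subgroup $\calB_0$ of $LG(k)$, the assignment $(\calB_0,\calB_1,\dots,\calB_r)\mapsto\calB_r$ is a bijection from the set of chains $\calB_0\overset{s_1}{\longdash}\calB_1\overset{s_2}{\longdash}\cdots\overset{s_r}{\longdash}\calB_r$ onto the set of Iwahori subgroups $\calB'$ with $(\calB_0,\calB')$ in relative position $s_1\cdots s_r$.} Part (a) is the case $\calB_0=\calB$: since $\tau\in\Omega$ normalizes $\calB$ we have $\tau^{-1}Y(w)=\calB(\tau^{-1}w)\calB/\calB=Y(s_1\cdots s_r)$, whose $k$-points are exactly the $\calB'$ at relative position $s_1\cdots s_r$ from $\calB$, and $Y(w)\cong\tau^{-1}Y(w)$ via left translation by $\tau^{-1}\in LG(k)$. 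Part (b) is the case $\calB_0={}^{y^{-1}}\calB={}^{y^{-1}\tau}\calB$, taking the representative $g_0=y^{-1}\tau$: the $\calB'$ at relative position $s_1\cdots s_r$ from $\calB_0$ then correspond to the points of $y^{-1}\tau\cdot Y(s_1\cdots s_r)=y^{-1}\cdot\bigl(\calB(\tau s_1\cdots s_r)\calB/\calB\bigr)=y^{-1}Y(w)$.

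The displayed statement rests on two consequences of reducedness, each by induction on $r$. First, iterating the BN-pair relations~\eqref{BN-pair_eq} and using $s_1\cdots s_{i-1}<s_1\cdots s_i$ yields $\calB s_1\calB\,s_2\calB\cdots s_r\calB=\calB\,s_1s_2\cdots s_r\calB$; consequently, along any chain as above (and any of its prefixes) $(\calB_0,\calB_i)$ is in relative position \emph{exactly} $s_1\cdots s_i$ for each $i$. Second, in such a chain $\calB_{i-1}$ is the \emph{unique} Iwahori subgroup lying at relative position $s_i$ from $\calB_i$ and at relative position $s_1\cdots s_{i-1}$ from $\calB_0$: indeed, after conjugating so that $\calB_0=\calB$, $\calB_{i-1}$ is a point, distinct from $\calB_i$, of the $\mathbb P^1$ of Iwahori subgroups at relative position $\le s_i$ from $\calB_i$, and by Lemma~\ref{BN_pair_lem}, parts (iii) and (iv) --- applied with $v=s_1\cdots s_i$, for which $vs_i<v$, together with $LG(k)$-invariance of relative position --- exactly one point of this $\mathbb P^1$ lies at relative position $s_1\cdots s_{i-1}=(s_1\cdots s_i)s_i$ from $\calB_0$, all others lying at relative position $s_1\cdots s_i$; so $\calB_{i-1}$ must be that point.

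Given these, surjectivity of the map follows by writing a representative $g'$ of the point attached to a given $\calB'$ --- at relative position $s_1\cdots s_r$ from $\calB_0={}^{g_0}\calB$ --- in the form $g'=g_0c_1s_1c_2s_2\cdots c_rs_r$ with $c_j\in\calB$, which the first consequence permits, and putting $\calB_i={}^{g_0c_1s_1\cdots c_is_i}\calB$; injectivity follows because, by the second consequence, $\calB_{r-1}$ is determined by $\calB_0$ and $\calB_r$, after which one recurses on the chain $\calB_0\overset{s_1}{\longdash}\cdots\overset{s_{r-1}}{\longdash}\calB_{r-1}$, whose label word $s_1\cdots s_{r-1}$ is again reduced. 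I expect the second consequence --- uniqueness of the predecessor $\calB_{i-1}$, i.e. the fact that a reduced word admits no ``backtracking'' chain reaching the same endpoint through an effectively shorter gallery --- to be the only genuinely delicate point; the cleanest route to it is the $\mathbb P^1$-dichotomy of Lemma~\ref{BN_pair_lem} (equivalently, the retraction of the building onto the standard apartment used to prove that lemma), keeping careful track that the labels $s_i$ record \emph{strict} relative positions while the composite relative position from $\calB_0$ stays of maximal length at each stage precisely because $s_1\cdots s_r$ is reduced.
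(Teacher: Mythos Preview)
Your argument is correct. The paper's own proof is a single sentence---``In both cases, note that $\tau$ normalizes the Iwahori $\calB$''---treating the chain description of $Y(s_1\cdots s_r)$ for a reduced word as well known; you have supplied that well-known content explicitly, via the BN-pair identity $\calB s_1\calB\cdots s_r\calB=\calB s_1\cdots s_r\calB$ for reduced words and the uniqueness of the predecessor in a minimal gallery extracted from Lemma~\ref{BN_pair_lem}. So the route is the same, only with the standard background filled in; what your version buys is self-containment, at the cost of length. One small remark: the paper uses $\calB_i$ to denote cosets $g_i\calB$ (Section~\ref{bnpair}) rather than conjugate subgroups ${}^{g_i}\calB$, so strictly speaking the identification $g\calB\mapsto{}^g\calB$ you invoke is a translation between two equivalent bookkeeping conventions rather than something the paper itself needs.
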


\begin{proof}
In both cases, note that $\tau$ normalizes the Iwahori $\calB$.
\end{proof}

\subsection{Proof of Theorem~\ref{thmA} for $\boldsymbol{\calP = \calB}$} \label{ThmA_P=B_proof}

Consider the morphism $p_{w_\bullet, \calB}\colon Y_\calB(w_\bullet) \rightarrow X_\calB(w_*)$.  For each $1 \leq i \leq r$, we choose a reduced expression
$$
w_i = \tau_i s_{i1} \cdots s_{i n_i}
$$
for $s_{ij} \in S_{\aff}$ and $\tau_i \in \Omega$.  Since conjugation by $\tau_i$ normalizes $\calB$, permutes $S_{\aff}$, and preserves the Demazure product, we may reduce the study of fibers to the case where each $\tau_i$ is $1$.  Then we have
$$
w_* = s_{11} * \cdots * s_{1 n_1} * s_{21} * \cdots * s_{2 n_2}* \cdots \cdots * s_{r1} * \cdots * s_{r n_r} =: s_{**}.
$$
By Lemma~\ref{unwind}, the morphism $p_{w_\bullet, \calB}$ is identified with the morphism $p_{s_{\bullet \bullet}, \calB} \colon Y(s_{\bullet \bullet}) \rightarrow  X(s_{**})$. By Section~\ref{ThmA_spec_case_subsec}, its fibers possess the required pavings. \qed

\subsection{Proof of Theorem~\ref{thmA} in general} \label{ThmA_general}

Let $\mathcal C$ be the class of $k$-varieties which are finite products of copies of $\mathbb A^1$ and $\mathbb A^1 - \mathbb A^0$.

We consider the morphism $p = p_{w_\bullet, \calP} \colon Y_\calP(w_\bullet) \rightarrow X_\calP(w_*)$ and suppose $v\calP$ lies in the image. We prove that the reduced fiber $p^{-1}(v\calP)$ has a $\mathcal C$-paving by induction on $r$.  As before, consider the morphism $p' \colon Y_\calP(w_1, \dots, w_{r-1}) \rightarrow \Flag_\calP$ given by $(\calP_1,\calP_2, \dots, \calP_{r-1}) \mapsto \calP_{r-1}$, and, by a slight abuse, its restriction $\xi^\circ = p'|_{p^{-1}(v\calP)} \colon p^{-1}(v\calP) \rightarrow \Flag_\calP$, defined by $(\calP_1, \dots, \calP_{r-1}, v\calP) \mapsto \calP_{r-1}$. We have
$$
\Im(\xi^\circ) = \Im(p') \cap vY_\calP\left(w_{r}^{-1}\right).
$$
We claim that for any $y \in W$ with corresponding $\calB$-orbit $Y_{\calB \calP}(y)$, the reduced intersection $$\Im(\xi^\circ) \cap Y_{\calB \calP}(y)$$ either is empty or has a $\mathcal C$-paving.  Then since such locally closed subsets cover $\Im(\xi^\circ)$ and since $p'$ is trivial over each such subset, the $\mathcal C$-paving of the reduced fiber $p^{-1}(v\calP)$ will follow by our induction hypothesis applied to $p'$ (using Lemma~\ref{triviality_lem} and Proposition~\ref{strat_triv}). Note that if $\Im(p') \cap Y_{\calB \calP}(y)$ is nonempty, then $Y_{\calB \calP}(y) \subset \Im(p')$, and we are trying to produce a $\mathcal C$-paving of the reduced intersection
$$
Y_{\calB\calP}(y) \cap vY_{\calP}(w_r^{-1}).
$$
 We can pass to $\calB$-orbits by writing $W_\calP w_r^{-1} W_\calP = \bigsqcup_{\eta_m} \eta_m W_\calP$ for $\eta_m \in W$ a finite collection of right-$\bf f$-minimal elements. We then have a locally closed decomposition $Y_{\calP}(w_r^{-1}) = \bigsqcup_{\eta_m} Y_{\calB \calP}(\eta_m)$. Thus we need to show that each reduced intersection
\begin{equation} \label{P_intersection}
Y_{\calB \calP}(y) \cap vY_{\calB \calP}(\eta_m)
\end{equation}
has a $\mathcal C$-paving. We may assume $y$ is also right-$\bf f$-minimal. It is tempting here to assert that by Lemma~\ref{BP_comp}, this is isomorphic to 
\begin{equation} \label{B_intersection}
Y_{\calB }(y) \cap vY_{\calB }(\eta_m).
\end{equation}
However, examples show that (\ref{B_intersection}) can be empty even when (\ref{P_intersection}) is nonempty.  The correct statement is the following.

\begin{lem} \label{saving_lem}
Let $\pi\colon \Flag_{\calB} \rightarrow \Flag_{\calP}$ denote the natural projection morphism.  Suppose $y, z \in W$ are right-${\bf f}$-minimal, and $v \in W$ is any element. Then $\pi$ induces an isomorphism of\, $k$-schemes {\em when they are given reduced subscheme structures}
$$
\pi_{y, zW_{\bf f}} \colon Y_{\calB}(y) \cap \left(\bigsqcup_{w \in W_{\bf f}} vY_{\calB}(zw)\right) ~~ \overset{\lowsim}{\longrightarrow} ~~ Y_{\calB \calP}(y) \cap v Y_{\calB \calP}(z).
$$
\end{lem}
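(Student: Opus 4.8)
The plan is to deduce the lemma almost directly from Proposition~\ref{BP_comp}, whose proof in fact shows that the projection $\pi$ itself identifies $Y_\calB(y)$ with $Y_{\calB\calP}(y)$ when $y$ is right-${\bf f}$-minimal; the lemma then becomes a matter of matching up subsets. So the first step is to record precisely that $\pi_0 := \pi|_{Y_\calB(y)}\colon Y_\calB(y) \overset{\lowsim}{\longrightarrow} Y_{\calB\calP}(y)$ is an isomorphism of $k$-schemes. Indeed, for $y$ right-${\bf f}$-minimal, Lemma~\ref{root_prod} together with Proposition~\ref{dHL.3.7.4}\eqref{dHL.3.7.4b} shows that $\mathcal U \cap {}^y\overline{\mathcal U}_\calP$ and $\mathcal U \cap {}^y\overline{\mathcal U}$ are the \emph{same} subscheme $\prod_a U_a$ of $\mathcal U$ (the product over affine roots with $a>0$ and $y^{-1}a<0$); and the proof of Proposition~\ref{BP_comp} identifies this common affine space with $Y_\calB(y) = Y_{\calB\calB}(y)$ via $u \mapsto uy\calB$ and with $Y_{\calB\calP}(y)$ via $u \mapsto uy\calP$. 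Since $\pi(uy\calB) = uy\calP$, the map $\pi_0$ is the identity on $\prod_a U_a$ under these identifications, hence an isomorphism.

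The second step is the set-theoretic identity, inside $\Flag_\calB$,
$$
Y_\calB(y) \cap \bigsqcup_{w \in W_{\bf f}} vY_\calB(zw) \;=\; Y_\calB(y) \cap \pi^{-1}\!\big(vY_{\calB\calP}(z)\big).
$$
For ``$\subseteq$'': if $g\calB \in vY_\calB(zw)$ then $g \in v\calB zw\calB$, so $g\calP \in v\calB zw\calP/\calP = v\calB z\calP/\calP = vY_{\calB\calP}(z)$, using $\calB \subset \calP$ and $zw\calP = z\calP$ for $w \in W_{\bf f} = W_\calP$. For ``$\supseteq$'': if $g\calP \in vY_{\calB\calP}(z)$ then $g \in v\calB z\calP$, and since $z$ is right-${\bf f}$-minimal the BN-pair relations give the disjoint decomposition $\calB z\calP = \bigsqcup_{w\in W_{\bf f}} \calB zw\calB$, whence $g \in v\calB zw\calB$ for some $w$, i.e.\ $g\calB \in vY_\calB(zw)$.

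To conclude, I would combine the two steps: an isomorphism of schemes restricts to an isomorphism between the reduced locally closed subschemes supported on corresponding subsets, so $\pi_0$ restricts to an isomorphism of the reduced locally closed subvariety of $Y_\calB(y)$ supported on the left-hand set above onto the reduced locally closed subvariety of $Y_{\calB\calP}(y)$ supported on its $\pi_0$-image, namely $Y_{\calB\calP}(y) \cap vY_{\calB\calP}(z)$; this restriction is precisely the map $\pi_{y,zW_{\bf f}}$. I expect the only real point of care to be the set identity of the second step --- being consistent about the chosen lifts of $W$ into $N_G(T)(F)$ and making genuine use of right-${\bf f}$-minimality of both $y$ and $z$ to get the clean decomposition $\calB z\calP = \bigsqcup_w \calB zw\calB$. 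The reduced-structure subtleties that intervene elsewhere in the paper are harmless here, since $Y_\calB(y)$ and $Y_{\calB\calP}(y)$ are honest affine spaces and $\pi_0$ is an isomorphism of them irrespective of whether $k$ is perfect.
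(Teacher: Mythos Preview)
Your proof is correct and in fact slightly more direct than the paper's. Both arguments rest on Proposition~\ref{BP_comp}, but you extract the stronger consequence that $\pi|_{Y_\calB(y)}\colon Y_\calB(y)\to Y_{\calB\calP}(y)$ is already an \emph{isomorphism of schemes} (not merely a bijection), and then you identify the domain of $\pi_{y,zW_{\bf f}}$ set-theoretically with $\pi_0^{-1}\bigl(Y_{\calB\calP}(y)\cap vY_{\calB\calP}(z)\bigr)$ via the decomposition $\calB z\calP=\bigsqcup_{w\in W_{\bf f}}\calB zw\calB$. Restricting an isomorphism to a reduced locally closed subscheme then finishes the job immediately. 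The paper instead only uses that $\pi_{y,zW_{\bf f}}$ is a monomorphism onto its image and establishes properness separately (by embedding the domain as a closed subscheme of the full $\pi$-preimage $\bigsqcup_{w'}Y_\calB(yw')\cap\bigsqcup_w vY_\calB(zw)$, over which $\pi$ is proper), then invokes the fact that a proper surjective monomorphism is an isomorphism on reduced structures. Your route avoids the properness detour entirely; the paper's route, on the other hand, would still go through if one only knew $\pi_0$ to be a bijective morphism rather than an isomorphism.
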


\begin{proof}
By restricting $\pi$ to the full $\pi$-preimage of the right-hand side, we obtain a proper surjective morphism
$$
\pi_{yW_{\bf f}, zW_{\bf f}} \colon \left(\bigsqcup_{w' \in W_{\bf f}} Y_{\calB}(yw')\right) \cap \left(\bigsqcup_{w \in W_{\bf f}} vY_{\calB}(zw)\right) ~~\longrightarrow ~~ Y_{\calB \calP}(y) \cap v Y_{\calB \calP}(z).
$$
Since $Y_{\calB}(y)$ is closed in $\bigsqcup_{w' \in W_{\bf f}} Y_{\calB}(yw')$, the domain of $\pi_{y, zW_{\bf f}}$ is closed in the domain of $\pi_{yW_{\bf f}, zW_{\bf f}}$, and hence $\pi_{y, zW_{\bf f}}$ is proper.  On the other hand, $\pi_{y, zW_{\bf f}}$ is a monomorphism and is surjective; both of these statements follow from Lemma~\ref{BP_comp}.  (Take a point $x \in  Y_{\calB \calP}(y) \cap v Y_{\calB \calP}(z)$; by Lemma~\ref{BP_comp}, we may write $\pi(\tilde{x}) = x$ for a unique element $\tilde{x} \in Y_{\calB}(y)$. Then note that automatically this $\tilde{x}$ also lies in the preimage $\pi^{-1}(vY_{\calB \calP}(z))$.) Since $\pi_{y, zW_{\bf f}}$ is a proper surjective monomorphism, it is an isomorphism \emph{on reduced subscheme structures}.
\end{proof}

We now apply this to (\ref{P_intersection}) with $z = \eta_m$.  It remains to prove that reduced intersections of the form
$$
Y_\calB(y) \cap v Y_\calB(\eta_m w)
$$
admit $\mathcal C$-pavings, for any element $w \in W_{\bf f}$.

This reduced intersection is turn is equal to the reduced fiber over $v\calB$ of the morphism 
$$
Y_\calB(y) \widetilde{\times} Y_\calB\left((\eta_m w)^{-1}\right) \longrightarrow X_\calB\left(y * (\eta_m w)^{-1}\right),
$$
by the appropriate special case of Lemma~\ref{key_fiber_lem} below. But each fiber of this morphism has a $\mathcal C$-paving by Section~\ref{ThmA_P=B_proof}. \qed

\begin{lem} \label{key_fiber_lem}
For any $w_1, w_2 \in W_\calP \backslash W/ W_\calP$ and $v \in W$, we have an isomorphism of\, $k$-schemes \textup{(}not necessarily given reduced structure\textup{)}
\begin{equation} \label{key_fiber_eq}
Y_\calP(w_1) \cap v Y_\calP\left(w_2^{-1}\right) \cong  p_{w_\bullet}^{-1}(v\calP).
\end{equation}
\end{lem}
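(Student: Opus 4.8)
The plan is to identify both sides of~(\ref{key_fiber_eq}) with the same locally closed subscheme of $\Flag_\calP$ by unwinding the moduli description of the twisted product recalled in Section~\ref{Review_sec}. Here $w_\bullet = (w_1, w_2)$, so $Y_\calP(w_\bullet) = Y_\calP(w_1)\,\widetilde{\times}\,Y_\calP(w_2)$, and I would use that this is canonically the locally closed subscheme of $\Flag_\calP\times\Flag_\calP$ whose points are the pairs $(g_1\calP, g_2\calP)$ with $g_1\in\calP w_1\calP$ and $g_1^{-1}g_2\in\calP w_2\calP$; under this identification, $p_{w_\bullet}$ is the restriction of the second projection $\pr_2$, while the first projection restricts to a morphism $q\colon Y_\calP(w_\bullet)\to Y_\calP(w_1)$, $(g_1\calP, g_2\calP)\mapsto g_1\calP$.

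Next I would compute the scheme-theoretic fiber directly. By definition $p_{w_\bullet}^{-1}(v\calP) = Y_\calP(w_\bullet)\times_{\pr_2,\,\Flag_\calP,\,v\calP}\Spec k$, which inside $\Flag_\calP\times\Flag_\calP$ is the scheme-theoretic intersection $Y_\calP(w_\bullet)\cap(\Flag_\calP\times\{v\calP\})$. On this intersection $\pr_2$ is constant, and since $\pr_1$ restricts to an isomorphism $\Flag_\calP\times\{v\calP\}\to\Flag_\calP$, the morphism $q$ restricts to a locally closed immersion of $p_{w_\bullet}^{-1}(v\calP)$ into $\Flag_\calP$. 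By the moduli description, its image represents the subfunctor of $\Flag_\calP$ consisting of the points $g_1\calP$ with $g_1\in\calP w_1\calP$ for which the condition $g_1^{-1}g_2\in\calP w_2\calP$ holds when $g_2\calP = v\calP$. Since $\calP w_2\calP$ is right $\calP$-stable, the latter is equivalent to $g_1^{-1}v\in\calP w_2\calP$, hence to $v^{-1}g_1\in\calP w_2^{-1}\calP$ (using $(\calP w_2\calP)^{-1}=\calP w_2^{-1}\calP$), hence to $g_1\calP\in v\,Y_\calP(w_2^{-1})$. Thus $q$ identifies $p_{w_\bullet}^{-1}(v\calP)$ with the scheme-theoretic intersection $Y_\calP(w_1)\cap v\,Y_\calP(w_2^{-1})$ of these two locally closed subschemes of $\Flag_\calP$, which is precisely the assertion~(\ref{key_fiber_eq}); since no step required passing to reduced structures, this is an isomorphism of $k$-schemes.

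I do not anticipate a real obstacle: the argument is essentially bookkeeping with the moduli description of $Y_\calP(w_\bullet)$ (which I would quote from Section~\ref{Review_sec} and~\cite{dHL18} rather than reprove) together with the elementary identity $(\calP w\calP)^{-1}=\calP w^{-1}\calP$ and the fact that $w_2^{-1}$ is a well-defined double coset because $w_2\in W_\calP\backslash W/W_\calP$. The one point to handle with care is to run the whole argument at the level of functors of points --- equivalently, with literal fiber products and scheme-theoretic intersections --- rather than on $k$-points, so that one obtains the scheme-level statement and not merely a bijection; the observation that makes this painless is that $\pr_1$ restricts to an isomorphism on $\Flag_\calP\times\{v\calP\}$, so the subscheme $Y_\calP(w_\bullet)\cap(\Flag_\calP\times\{v\calP\})$ transports to an honest locally closed subscheme of $\Flag_\calP$, which one then matches with the asserted intersection.
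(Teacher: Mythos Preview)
Your proposal is correct and is essentially the same argument as the paper's: both identify the functor of points of each side by unwinding the moduli description of $Y_\calP(w_\bullet)$ and checking that the condition $g_1\in\calP w_1\calP$, $g_1^{-1}v\in\calP w_2\calP$ is exactly the condition $g_1\calP\in Y_\calP(w_1)\cap vY_\calP(w_2^{-1})$. The paper phrases this as an equality of presheaves (invoking that \'etale sheafification commutes with finite limits), while you phrase it via the embedding into $\Flag_\calP\times\Flag_\calP$ and projection to the first factor; these are two packagings of the same computation.
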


\begin{proof}
Each side is a scheme which is determined by the operation of \'etale sheafification of a certain presheaf on the category ${\Affine}_k$. As sheafification commutes with finite limits, it is enough to prove that the corresponding presheaves are isomorphic.  Suppose $R$ is any $k$-algebra.  A section of the presheaf fiber $p_{w_\bullet}^{-1}(v\calP)(R)$ is a tuple of the form $(\calP_R, g\calP_R, v\calP_R)$, where $\calP_R = L^+\calG_{\bf f}(R)$ and where $g \in \calP_R w_1 \calP_R$ and $g^{-1}v \in \calP_R w_2 \calP_R$.  The means precisely that $g\calP_R \in Y_\calP(w_1)(R) \cap vY_\calP(w_2^{-1})(R)$.  Thus the two presheaves coincide.
\end{proof}

\section{Proof of Corollary~\ref{corB}}  \label{corB_sec}

This follows immediately from Theorem~\ref{thmA}, as we have a decomposition into locally closed subvarieties
\begin{equation} \label{conv_cl_rel}
X_\calP(w_\bullet) = \bigsqcup_{v_\bullet} Y_\calP(v_\bullet), 
\end{equation}
where $v_\bullet$ ranges over all tuples $(v_1, v_2, \dots, v_r) \in W_\calP \backslash W/W_\calP$ such that $v_i \leq w_i$ in the Bruhat order on $W_\calP \backslash W/W_\calP$ for all $i$.  Thus the fiber has a corresponding decomposition, and the result follows from Theorem~\ref{thmA}. \qed

\section{Application to structure constants for parahoric Hecke algebras} \label{HA_app}

Fix a nonarchimedean field $F$ with ring of integers $\mathcal O_F$ and residue field $k_F = \mathbb F_q$.  Let us suppose $G$ is a split group over $\mathbb Z$, and fix a Borel pair $B \supset T$ in $G$, also split and defined over $\mathbb Z$. This gives rise to the extended affine Weyl group $W$ defined using $G \supset B \supset T$ (it agrees with the extended affine Weyl group attached to $G_F \supset B_F \supset T_F$). For any parahoric subgroup $\calP \subset G(F)$, consider the parahoric Hecke algebra $\mathcal H(G(F)/\!/\calP) = C_c(\calP \backslash G(F) /\calP, \mathbb C)$, give the structure of a unital associative $\mathbb C$-algebra with convolution $*$ defined using the Haar measure on $G(F)$ giving $\calP$ volume $1$. Consider the $\mathbb C$-basis of characteristic functions
$f_w := 1_{\mathcal P w \mathcal P}$ indexed by elements $w \in W_\calP \backslash W/W_\calP$.  We can represent such cosets by maximal length elements $w \in \,^{\bf f} W ^{\bf f}$.

\begin{prop}  \label{str_const_prop}
For any $w_1, w_2 \in \,^{\bf f} W ^{\bf f}$, we have 
$$
f_{w_1} * f_{w_2} = \sum_{v \in \,^{\bf f} W ^{\bf f}} c^v_{w_1, w_2}(q) \, f_v, 
$$
where the structure constant is a nonnegative integer of the form 
$$
c^v_{w_1, w_2}(q) = \sum_{a,b \in \mathbb Z_{\geq 0}} m_{a,b} ~q^a (q-1)^b
$$
for certain nonnegative integers $m_{a,b}$ which vanish for all but finitely many pairs $(a,b)$.
\end{prop}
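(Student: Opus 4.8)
The plan is to interpret the structure constant $c^v_{w_1,w_2}(q)$ geometrically as a point-count over $\bbF_q$, and then to apply Corollary~\ref{corB} to conclude that this count is a polynomial in $q$ of the required shape. First I would recall the standard dictionary between convolution of characteristic functions and point-counts on convolution varieties: for $w_1, w_2 \in \,^{\bf f}W^{\bf f}$ and $v \in \,^{\bf f}W^{\bf f}$ with $v \leq w_1 * w_2$, the coefficient $c^v_{w_1,w_2}(q)$ equals the number of $\bbF_q$-points of the fiber $m_{(w_1,w_2),\calP}^{-1}(v e_\calP)$ over a fixed $\bbF_q$-rational point $v e_\calP \in X_\calP(w_1 * w_2)(\bbF_q)$. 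This is because $f_{w_1} * f_{w_2}$ evaluated at $v$ counts pairs $(g_1\calP, g_2\calP)$ with $g_1^{-1}g_2 \in \calP w_2\calP$ and $g_1 \in \calP w_1\calP$ and $g_2\calP = v\calP$, which is exactly the $\bbF_q$-points of the reduced fiber $Y_\calP(w_1) \cap vY_\calP(w_2^{-1})$, identified via Lemma~\ref{key_fiber_lem} with $p_{(w_1,w_2)}^{-1}(v\calP)$; summing over the locally closed pieces in~\eqref{conv_cl_rel} gives the fiber of $m_{(w_1,w_2),\calP}$. (One must check the measure normalization giving $\calP$ volume $1$ matches this count, which is routine.) In particular $c^v_{w_1,w_2}(q)$ is manifestly a nonnegative integer.

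Next I would invoke Corollary~\ref{corB}: the reduced fiber $m_{(w_1,w_2),\calP}^{-1}(v e_\calP)$ is paved by finite products of copies of $\mathbb A^1$ and $\mathbb A^1 - \mathbb A^0$. Since $v e_\calP$ is an $\bbF_q$-rational point and (as explained in the proofs of Proposition~\ref{strat_triv} and Section~\ref{ThmA_general}) the entire construction — the twisted product, the convolution morphism, the stratified-triviality isomorphisms, and hence the cellular paving — is defined over the prime field, the paving pieces are themselves defined over $\bbF_q$. Therefore
$$
\#\, m_{(w_1,w_2),\calP}^{-1}(v e_\calP)(\bbF_q) = \sum_j \#\left(\mathbb A^{a_j} \times (\mathbb A^1 - \mathbb A^0)^{b_j}\right)(\bbF_q) = \sum_j q^{a_j}(q-1)^{b_j},
$$
where the sum runs over the finitely many pieces of the paving. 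Collecting terms with the same exponent pair $(a,b)$ yields
$$
c^v_{w_1,w_2}(q) = \sum_{a,b \geq 0} m_{a,b}\, q^a (q-1)^b
$$
with $m_{a,b} \in \bbZ_{\geq 0}$ the number of paving pieces isomorphic to $\mathbb A^a \times (\mathbb A^1 - \mathbb A^0)^b$, which vanishes for all but finitely many $(a,b)$ since the paving is finite. Nonnegativity of $c^v_{w_1,w_2}(q)$ as an integer is then clear both from the point-count and from this expression (as $q \geq 2$, or directly as a cardinality).

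The main obstacle I anticipate is not any deep input — Corollary~\ref{corB} does the heavy lifting — but rather the bookkeeping needed to make the geometric identification of $c^v_{w_1,w_2}(q)$ with the fiber point-count fully precise: one must be careful that the Haar-measure normalization (assigning $\calP$ volume $1$) is exactly compensated by working with the fppf/étale quotient defining $X_\calP(w_\bullet)$, so that the naive count of $\bbF_q$-points of the reduced fiber is literally the coefficient and not off by a power of $q$ coming from $\#\calP(\bbF_q)$-orbit sizes. This is standard (it is the usual comparison between counting double cosets and counting points of Schubert-type varieties over finite fields), but it should be stated carefully; once it is in place, the proof is immediate. It is also worth remarking that the paving is independent of $q$ in the sense that the combinatorial data $\{(a_j,b_j)\}$ depend only on $w_1, w_2, v$ and $\calP$ (not on the finite field), which is why $c^v_{w_1,w_2}(q)$ is genuinely a single polynomial in $q$ valid for all prime powers $q$ — this follows from the fact, emphasized throughout Sections~\ref{Strat_triv_sec}--\ref{corB_sec}, that the constructions are defined over $\bbZ$ (cf. Theorem~\ref{thmD}).
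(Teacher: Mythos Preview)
Your approach is essentially the paper's, but there is a genuine slip in the identification of the structure constant with a fiber point-count. You assert that $c^v_{w_1,w_2}(q) = \#\, m_{(w_1,w_2),\calP}^{-1}(v e_\calP)(\bbF_q)$, and then invoke Corollary~\ref{corB}. This is not correct: the convolution $f_{w_1} * f_{w_2}$ involves the \emph{open} double cosets $\calP w_i \calP$, so (as your own explanation via Lemma~\ref{key_fiber_lem} shows) the structure constant equals $\#\, p_{(w_1,w_2),\calP}^{-1}(v e_\calP)(\bbF_q)$, the fiber of the \emph{uncompactified} convolution morphism. The fiber of $m$ is strictly larger in general---by \eqref{conv_cl_rel} it is the disjoint union of the $p_{v_\bullet}$-fibers over all $v_\bullet \leq (w_1,w_2)$---so its point count is $\sum_{v_1 \leq w_1,\, v_2 \leq w_2} c^v_{v_1,v_2}(q)$, not $c^v_{w_1,w_2}(q)$ alone. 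Your sentence ``summing over the locally closed pieces in~\eqref{conv_cl_rel} gives the fiber of $m_{(w_1,w_2),\calP}$'' is true but runs in the wrong direction: it does not recover the single term you want from the $m$-fiber. The fix is immediate: apply Theorem~\ref{thmA} directly to the fiber of $p_{(w_1,w_2),\calP}$, exactly as the paper does, and drop the appeal to Corollary~\ref{corB}.

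A second, smaller omission: the proposition is stated for a general nonarchimedean local field $F$ with residue field $\bbF_q$, which may have characteristic zero, whereas the geometric results (Theorem~\ref{thmA}, etc.) are proved for $F = k(\!(t)\!)$. The paper handles this by observing that the parahoric Hecke-algebra combinatorics depend only on the affine root datum and $q$, so one may transport the problem to $F = \bbF_q(\!(t)\!)$ before counting points. You should include this reduction step.
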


\begin{proof}
The combinatorics of parahoric Hecke algebras over characteristic zero local fields $F$ are the same as those for $F=\mathbb F_q(\!(t)\!)$ (the parahoric subgroups in each setting chosen to correspond to each other in the obvious way, suitably identifying apartments for $G_F \supset T_F$ and $G_{\mathbb F_q(\!(t)\!)} \supset T_{\mathbb F_q(\!(t)\!)}$ and facets therein---for a much more general statement, see \cite[Section~4.1.2]{PZ13}).  Therefore, we can assume $F$ is of the latter form. Then note that $c^v_{w_1, w_2}(q)$ is the number of $\mathbb F_q$-rational points in the fiber over $v\calP$ of the corresponding convolution morphism $Y_\calP(w_1) \widetilde{\times} Y_\calP(w_2) \rightarrow X_\calP(w_*)$.  Thus the result follows from Theorem~\ref{thmA}.
\end{proof}

This gives rise to general parahoric variants (in the equal parameter case) of combinatorial results on structure constants for spherical affine Hecke algebras due to Parkinson \cite[Theorem~7.2]{Par06} and Schwer \cite{Schw06}. By virtue of the Macdonald formula (see \textit{e.g.} \cite[Theorem~5.6.1]{HKP}), the function $P_\lambda$ considered (albeit with differing normalizations) by Parkinson and Schwer agrees up to an explicit normalizing factor with the Satake transform $f^\vee_\lambda$ of the basis elements $f_{\lambda} = 1_{G(\mathbb F_q[\![t]\!]) t^\lambda G(\mathbb F_q[\![t]\!])}$ above, for any dominant $\lambda \in X_*(T)$. In particular, Proposition~\ref{str_const_prop} shows that suitably renormalized versions of the functions $C^\nu_{\lambda \mu}$ appearing in \cite[Theorem~1.3]{Schw06} lie in $\mathbb Z_{\geq 0}[q-1]$.

\section{Cellular paving of certain subvarieties in the affine Grassmannian} \label{KU_GrG_sec}

In this section, we will restrict our attention to certain generalizations of the intersections containing the Mirkovic--Vilonen cycles in the affine Grassmannian.  Let $\calP = \calP_{\bf 0} = L^+G$, and consider the affine Grassmannian $\Gr_G = \Flag_\calP$.  We fix any standard parabolic subgroup $P \supset B$ with Levi factorization $P = MN$ for a Levi subgroup $M \supset T$ and unipotent radical $N \subset U$.  Here $ B = TU$ is the Levi decomposition of the fixed Borel subgroup $B$.

We abbreviate $K = L^+G$ and note that the intersection $K_M := K \cap LM$ in $LG$ can be identified with $L^+M$.  We define $K_P := K_M \cdot LN$.  This is a semidirect group ind-scheme over $k$ since $K_M$ normalizes $LN$. For $\lambda \in X_*(T)$, denote the corresponding point by $x_\lambda := \lambda(t) e_{L^+G} \in \Gr_G(k)$. 

Fix $\mu \in X_*(T)^+$.  Recall \cite[Definition~3.1]{HKM}, in which we declare $\nu \in X_*(T)$ satisfies $\nu \geq^P \mu$ provided that
\begin{itemize}
\item $\langle \alpha, \nu \rangle = 0$ for all $T$-roots $\alpha$ appearing in $\Lie(M)$;
\item $\langle \alpha, \nu + \lambda \rangle > 0$ for all $T$-roots $\alpha$ appearing in $\Lie(N)$ and for all $\lambda \in \Omega(\mu)$.
\end{itemize}
Here $\Omega(\mu) = \{ \lambda \in X_*(T) \, | \, \mu - w\lambda ~\mbox{is a sum of positive coroots, for all $w \in W_0$}\}$. Also, let $X_*(T)^{+_M}$ be the cocharacters which are dominant for the roots appearing in $\Lie(B \cap M)$.

\begin{prop} If $\nu \geq^P \mu$ for $\mu \in X_*(T)^+$, and if $\lambda \in \Omega(\mu) \cap X_*(T)^{+_M}$, then there is an equality of\, $k$-subvarieties in $\Gr_G$ 
\begin{equation} \label{HKM_eq}
\left(t^{-\nu}Kt^\nu\right)x_\lambda \,\cap\, K x_\mu = K_Px_\lambda \,\cap\, K x_\mu.
\end{equation}
\end{prop}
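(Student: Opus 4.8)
The plan is to verify the two inclusions separately, comparing the groups $t^{-\nu}Kt^\nu$ and $K_P$ one affine root group at a time; the hypothesis $\nu\geq^P\mu$ will enter in two different ways, once for each inclusion. First I would fix conventions and make reductions. As group functors, $K=L^+G$ is generated by $T(\calO)$ together with the affine root groups $U_{\alpha+j}$ for $j\geq 0$, so $t^{-\nu}Kt^\nu$ is generated by $T(\calO)$ together with the $U_{\alpha+j}$ for $j\geq -\langle\alpha,\nu\rangle$. Applying the defining conditions of $\nu\geq^P\mu$ with $\lambda'=\lambda$ (legitimate since $\lambda\in\Omega(\mu)$) and using $\lambda\in X_*(T)^{+_M}$, one checks that $\nu+\lambda$ is $G$-dominant; hence $Kx_{\nu+\lambda}$ is the full Schubert cell $\Gr^{\nu+\lambda}$, and $(t^{-\nu}Kt^\nu)x_\lambda=t^{-\nu}\,\Gr^{\nu+\lambda}$. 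So I would reformulate the assertion as: for $z\in Kx_\mu$ one has $t^\nu z\in\Gr^{\nu+\lambda}$ if and only if $z\in K_Px_\lambda$. Since $Kx_\mu$, $K_Px_\lambda$, and $(t^{-\nu}Kt^\nu)x_\lambda$ are all stable under the left action of $K_M=L^+M\subseteq t^{-\nu}Kt^\nu$, I can argue $L^+M$-equivariantly, and, using the Levi decomposition $K_P=LN\rtimes L^+M$, reduce every point of $K_Px_\lambda$ to the form $nx_\lambda$ with $n\in LN$.

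For the inclusion $K_Px_\lambda\cap Kx_\mu\subseteq(t^{-\nu}Kt^\nu)x_\lambda$ I would take $z=nx_\lambda$ with $n\in LN$ and $z$ lying in the Schubert variety $\overline{Kx_\mu}$, and show that, after modifying $n$ by an element of $\on{Stab}(x_\lambda)$, one may take $n$ to be a product of elements of affine root groups $U_{\alpha+j}$ with $\alpha$ a root of $\Lie(N)$ and $j\geq -\langle\alpha,\nu\rangle$. This is the crucial estimate: membership of $z$ in the bounded variety $\overline{Kx_\mu}$ bounds below the $t$-adic valuations occurring in $n$, and the set $\Omega(\mu)$ is engineered precisely so that the clause ``$\langle\alpha,\nu+\lambda'\rangle>0$ for all $\lambda'\in\Omega(\mu)$'' in the definition of $\nu\geq^P\mu$ is equivalent to the statement that $\langle\alpha,\nu\rangle$ is large enough to push all the resulting powers of $t$ up to nonnegative ones. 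I would make this precise either by embedding $\overline{Kx_\mu}$ into an explicit bounded region of lattices in a faithful representation, or by using the retraction of the building onto the standard apartment based at ${\bf a}$ as in \cite[Section~6]{HKM}. Granting it, conjugation by $t^\nu$ (which sends $U_{\alpha+j}$ to $U_{\alpha+j+\langle\alpha,\nu\rangle}$) carries $n$ into $L^+N$, so $t^\nu z=(t^\nu nt^{-\nu})x_{\nu+\lambda}\in L^+N\cdot x_{\nu+\lambda}\subseteq\Gr^{\nu+\lambda}$, as desired.

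For the reverse inclusion $(t^{-\nu}Kt^\nu)x_\lambda\cap Kx_\mu\subseteq K_Px_\lambda$ the key point is that every affine root group $U_{\alpha+j}$ with $\alpha$ a root of $\Lie(\overline{N})$ and $j\geq -\langle\alpha,\nu\rangle$ already fixes $x_\lambda$. Indeed $U_{\alpha+j}$ fixes $x_\lambda$ exactly when $j\geq\langle\alpha,\lambda\rangle$, and applying the defining inequality of $\nu\geq^P\mu$ to the root $-\alpha$ of $\Lie(N)$ and the element $\lambda\in\Omega(\mu)$ gives $\langle-\alpha,\nu+\lambda\rangle>0$, i.e. $-\langle\alpha,\nu\rangle>\langle\alpha,\lambda\rangle$, so indeed $j\geq -\langle\alpha,\nu\rangle>\langle\alpha,\lambda\rangle$. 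Combining this with the factorization of the conjugate $t^{-\nu}Kt^\nu$ of $L^+G$ relative to $P$ (the ``$\overline{N}$-part'' sitting next to $x_\lambda$ gets absorbed into $\on{Stab}(x_\lambda)$), the orbit $(t^{-\nu}Kt^\nu)x_\lambda$ equals the orbit of $x_\lambda$ under the subgroup generated by $L^+M$ and $(t^{-\nu}Kt^\nu)\cap LN$, which is contained in $L^+M\cdot LN\cdot x_\lambda=K_Px_\lambda$. Intersecting with $Kx_\mu$ and combining with the previous step finishes the argument.

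I expect the main obstacle to be the boundedness estimate of the second paragraph: converting ``$z\in\overline{Kx_\mu}$'' into the sharp lower bound on the valuations appearing in $n$, and matching it exactly against the $\Omega(\mu)$-clause — delicate because $n$ is only well-defined modulo $\on{Stab}(x_\lambda)$, so one must carefully select the representative lying in the ``positive enough'' part. A secondary technical point is the factorization bookkeeping for the conjugate $t^{-\nu}Kt^\nu$ used in the third paragraph. Since the combinatorial input here is precisely that of \cite{HKM}, an alternative would be to deduce the statement directly from the results of that paper.
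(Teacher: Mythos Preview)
Your alternative in the last sentence is exactly what the paper does: it cites \cite[Proposition~7.1]{HKM} and \cite[Lemma~7.3]{HKM} to obtain the equality with $\overline{Kx_\mu}$ in place of $Kx_\mu$, and then observes that removing the closures follows formally. Your direct approach amounts to reproving those two \cite{HKM} results inline. The third-paragraph argument for $(t^{-\nu}Kt^\nu)x_\lambda \subseteq K_Px_\lambda$ is correct---note it does not even use the intersection with $Kx_\mu$, only $\nu \geq^P \mu$ and $\lambda \in \Omega(\mu)$---and is essentially the content of \cite[Lemma~7.3]{HKM}; the Iwahori-type factorization of $t^{-\nu}Kt^\nu$ relative to $P$ that you invoke is standard. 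For the other inclusion you correctly isolate the hard step, the valuation bound on $n \in LN$ forced by $nx_\lambda \in \overline{Kx_\mu}$, but you do not supply an independent argument; this is precisely what \cite[Proposition~7.1]{HKM} provides. So your sketch is sound in outline, but at the crucial point it collapses to citing \cite{HKM}, which is the paper's strategy from the outset. One minor difference: the paper first proves the statement with the closed cell $\overline{Kx_\mu}$ and then passes to the open one, whereas you work directly with $Kx_\mu$; this is harmless since the passage between open and closed versions is formal in either direction.
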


\begin{proof}
The equality $(t^{-\nu}Kt^\nu)x_\lambda \,\cap\, \overline{K x_\mu} = K_Px_\lambda \,\cap\, \overline{K x_\mu}$ follows on combining \cite[Proposition~7.1]{HKM} and \cite[Lemma~7.3]{HKM}. The desired equality without the closures follows formally from this one.
\end{proof}

The left-hand side of (\ref{HKM_eq}) admits a cellular paving by Theorem~\ref{thmA}. Indeed, we have an equality of reduced subschemes
$$
\left(t^{-\nu}Kt^\nu\right)x_\lambda \,\cap\, K x_\mu = p_{w_\bullet, L^+G}^{-1} \left(t^{-\nu} e_{L^+G}\right),
$$
for $w_\bullet = (t_\mu, t_{-\nu-\lambda})$; see Lemma~\ref{key_fiber_lem}. Hence we deduce the following result.

\begin{cor}\label{MN_cor} For $\mu, \lambda$ as above, the variety $L^+M\,LN\,x_\lambda \cap L^+G\,x_\mu$ in $\Gr_G$ admits a cellular paving. In particular, for $P = B$, the Mirkovic--Vilonen variety $LUx_\lambda \cap L^+G x_\mu$ admits a cellular paving. 
\end{cor}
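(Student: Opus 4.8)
The plan is to deduce the corollary directly from the Proposition above, Lemma~\ref{key_fiber_lem}, and Theorem~\ref{thmA}, so that essentially no argument is needed beyond bookkeeping. First I would note that for the given $\mu \in X_*(T)^+$ there always exists some $\nu \in X_*(T)$ with $\nu \geq^P \mu$: one may take $\nu$ central in $M$ (hence orthogonal to all $T$-roots in $\Lie(M)$) and deep enough in the $N$-dominant direction, which is possible since $\Omega(\mu)$ is finite. For such $\nu$, the Proposition (equation~\eqref{HKM_eq}) gives an equality of $k$-subvarieties of $\Gr_G$
$$
L^+M\,LN\,x_\lambda \cap L^+G\,x_\mu \;=\; K_P x_\lambda \cap K x_\mu \;=\; \bigl(t^{-\nu}K t^\nu\bigr)x_\lambda \cap K x_\mu .
$$

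Next I would rewrite the right-hand side in the form appearing in Lemma~\ref{key_fiber_lem}. Since $t^\nu x_\lambda = x_{\nu+\lambda}$, we have $K x_\mu = Y_{L^+G}(t_\mu)$ and $\bigl(t^{-\nu}K t^\nu\bigr)x_\lambda = t^{-\nu}Y_{L^+G}(t_{\nu+\lambda})$, so with $\calP = L^+G$, $w_\bullet = (t_\mu,\, t_{-\nu-\lambda})$ and $v = t^{-\nu}$, Lemma~\ref{key_fiber_lem} identifies our variety, as a reduced $k$-scheme, with the reduced fiber $p_{w_\bullet, L^+G}^{-1}(t^{-\nu}e_{L^+G})$; this is exactly the identification recorded in the Remark preceding the corollary. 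By Theorem~\ref{thmA} that fiber is paved by finite products of copies of $\mathbb A^1$ and $\mathbb A^1 - \mathbb A^0$, i.e.\ admits a cellular paving, and transporting the paving back yields the cellular paving of $L^+M\,LN\,x_\lambda \cap L^+G\,x_\mu$.

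Finally, for $P = B$ one has $M = T$, $N = U$, so $X_*(T)^{+_M} = X_*(T)$ and the hypothesis on $\lambda$ reduces to $\lambda \in \Omega(\mu)$; moreover $L^+T$ fixes $x_\lambda = t^\lambda e_{L^+G}$ and normalizes $LU$, so $K_B x_\lambda = L^+T\,LU\,x_\lambda = LU\,x_\lambda$, and the general statement specializes to the assertion for the Mirkovic--Vilonen variety $LU x_\lambda \cap L^+G x_\mu$. The whole proof is routine once Theorem~\ref{thmA} is in hand; the only step requiring any attention is tracking the translation-element conventions carefully enough to land on the tuple $w_\bullet = (t_\mu, t_{-\nu-\lambda})$ and base point $t^{-\nu}e_{L^+G}$ in Lemma~\ref{key_fiber_lem}, and I would not expect a genuine obstacle there.
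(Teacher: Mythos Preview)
Your proposal is correct and follows essentially the same approach as the paper: use the preceding Proposition to rewrite the intersection as $(t^{-\nu}Kt^\nu)x_\lambda \cap Kx_\mu$, identify this via Lemma~\ref{key_fiber_lem} with the reduced fiber $p_{w_\bullet, L^+G}^{-1}(t^{-\nu}e_{L^+G})$ for $w_\bullet = (t_\mu, t_{-\nu-\lambda})$, and then invoke Theorem~\ref{thmA}. Your added remarks on the existence of a suitable $\nu$ and on the specialization to $P=B$ are helpful elaborations but do not depart from the paper's line of argument.
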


Note that this applies to all pairs $(\mu, \lambda) \in X_*(T)^+ \times X_*(T)^{+_M}$: If the intersection is nonempty, then $\lambda \in \Omega(\mu)$ is automatic, by \cite[Lemma~7.2(b)]{HKM}.

\section{Paving results over \texorpdfstring{$\boldsymbol{\mathbb Z}$}{Z}} \label{Z_sec}

The goal of what follows is to extend the constructions and results above to work over $\mathbb Z$.  Because there is no building attached to a group over $\bbZ\pot{t}$, the main challenge is to give purely group-theoretic arguments for certain results which are usually proved with the aid of buildings.

\subsection{Basic constructions over $\boldsymbol{\mathbb Z}$}

We shall recall the basic notions attached to groups over $\mathbb Z$.  One useful reference is \cite[Section~4]{RS20}, but in places we have chosen a slightly different way to justify the foundational results (for example, we do not assume the existence of the Demazure resolutions over $\mathbb Z$---a result stated without proof in \cite{Fal03}---and instead we construct them as a special case of the convolution morphisms over $\bbZ$). 

We assume $G$ is a reductive group over $\mathbb Z$, more precisely, a smooth affine group scheme over $\mathbb Z$ whose geometric fibers are connected reductive groups, and which admits a maximal torus $T$ over $\mathbb Z$, which is automatically split (see \cite[Section~6.4, Example~5.1.4]{Co14}).
We fix a Borel pair over $\mathbb Z$, given by $G \supset B \supset T$ (Borel subgroups $B \supset T$ exist, by \textit{e.g.} \cite[Proof of Theorem~5.1.13]{Co14}).  Following \cite[Section~4]{RS20}, we have the usual objects: the standard apartment endowed with its Coxeter complex structure given by the affine roots,  the base alcove ${\bf a}$ and other facets ${\bf f}$ therein, the Weyl group $W_0$, the Iwahori--Weyl group $W$, the affine Weyl group $W_{\aff}$, and the stabilizer subgroups $W_{\bf f} \subset W_{\aff}$. The Iwahori--Weyl group $W := N_G(T)(\mathbb Z(\!(t)\!))/T(\mathbb Z[\![t]\!])$ can be identified with the extended affine Weyl group $X_*(T) \rtimes W_0$, where using \cite[Proposition~5.1.6]{Co14} we may identify $W_0 = N_G(T)(\mathbb Z[\![t]\!])/T(\mathbb Z[\![t]\!])$.  As $X_*(T) \rtimes W_0$ remains unchanged upon base changing along $\mathbb Z \rightarrow k$ for any field $k$, it inherits a Bruhat order $\leq$ as in the classical theory over a field. Similarly, the apartment is canonically identified with the apartments attached to $(G_{\bbQ\rpot{t}}, T_{\bbQ\rpot{t}})$ or $(G_{\bbF_p\rpot{t}}, T_{\bbF_p\rpot{t}})$ for any prime number~$p$.

We define in the obvious way the positive loop group $L^+G_\mathbb Z$ (a pro-smooth affine group scheme over $\mathbb Z$) and the loop group $LG_\mathbb Z$ (an ind-affine group ind-scheme over $\mathbb Z$). For representability, see \textit{e.g.} \cite[Lemma~3.2]{HR20}.

The following result is essentially due to Pappas and Zhu, and this precise form was checked jointly with Timo Richarz. This is a very special case of the general construction due to Louren\c{c}o, in \cite[Section~3]{Lou23}.

 \begin{lem} \label{PZ_ext_lem} Let ${\bf f}$ be any facet of the apartment corresponding to $T$ in the Bruhat--Tits building of $G(\bbQ\rpot{t})$, and let $\calG_{\bf f_\bbQ}$ be the associated parahoric $\bbQ\pot{t}$-group scheme with connected fibers and with generic fiber $G\otimes_\bbZ\bbQ\rpot{t}$. Then there exists a unique smooth affine fiberwise connected $\bbZ\pot{t}$-group scheme $\calG_{\bf f}$ of finite type extending $\calG_{\bf f_\bbQ}$ with the following properties:\smallskip\\
 i\textup{)} There is an identification of\, $\bbZ\rpot{t}$-groups $\calG_{\bf f} \otimes_{\bbZ\pot{t}}\bbZ\rpot{t}=G\otimes_\bbZ\bbZ\rpot{t}$. \smallskip\\ 
 ii\textup{)} For every prime number $p$, the group scheme $\calG_{\bf f}\otimes_{\bbZ\pot{t}}\bbF_p\pot{t}$ is the Bruhat--Tits group scheme with connected fibers for $G\otimes_\bbZ\bbF_p\rpot{t}$ associated with ${\bf f}$.
 \end{lem}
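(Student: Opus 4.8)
The plan is to construct $\calG_{\bf f}$ by a dilatation (N\'eron blow-up) procedure applied to a suitable reductive model, following the Pappas--Zhu strategy but phrased over $\bbZ\pot{t}$ rather than over the power series ring of a $p$-adic base. First I would fix, over $\bbZ\pot{t}$, the constant reductive group $G_{\bbZ\pot{t}} := G \otimes_\bbZ \bbZ\pot{t}$, together with its split maximal torus $T_{\bbZ\pot{t}}$ and Borel $B_{\bbZ\pot{t}}$. The apartment of $(G_{\bbQ\rpot{t}}, T_{\bbQ\rpot{t}})$ is canonically identified, by the discussion preceding the lemma, with the standard apartment $X_*(T) \otimes \bbR$, so the facet ${\bf f}$ in the closure of the base alcove determines a ``location'' which makes sense over every residue field. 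The idea is then to realize $\calG_{\bf f}$ as an iterated dilatation of $G_{\bbZ\pot{t}}$ along the special fiber $G_{\bbZ}$ (the fiber over $t = 0$), where the centres of the successive blow-ups are the parahoric subgroups $\calP_{\bf f, k}$ of $G(k\pot{t})$ cut out combinatorially by the affine roots that are $\overset{\bf f}{\geq} 0$. Concretely, since $G$ is split, the parahoric $\calG_{\bf f, k}$ over any field $k$ is the smooth affine $k\pot{t}$-group scheme generated by $T_{k\pot{t}}$ and the affine root subgroups $U_a$ with $a \overset{\bf f}{\geq} 0$; one can lift this picture to $\bbZ$ using the Chevalley pinning of $G$ over $\bbZ$, which gives root homomorphisms $u_\alpha \colon \bbG_a \to G$ over $\bbZ$ and hence affine root group schemes over $\bbZ\pot{t}$.

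The key steps, in order, would be as follows. (1) Using the pinning, define over $\bbZ\pot{t}$ the closed subgroup scheme of $G_{\bbZ\pot{t}} \otimes (\bbZ\pot{t}/t)$ corresponding to $\calP_{\bf f}$ reduced mod $t$, i.e.\ the image of $B_{\bbZ}$ (or more precisely the subgroup generated by $T_\bbZ$ and the $U_\alpha$ with $\alpha(\mathbf f$-direction$) \geq 0$), and perform the dilatation of $G_{\bbZ\pot{t}}$ along this subgroup. Iterating this finitely many times (the number of iterations depending only on $\bf f$ through the combinatorics of the affine roots, exactly as in Bruhat--Tits theory over a field) produces a smooth affine $\bbZ\pot{t}$-group scheme $\calG_{\bf f}$ of finite type with $\calG_{\bf f} \otimes \bbZ\rpot{t} = G_{\bbZ\rpot{t}}$, which gives property (i). (2) Since dilatation commutes with flat base change on the base ring, for every field $k$ the base change $\calG_{\bf f} \otimes_{\bbZ\pot{t}} k\pot{t}$ is the iterated dilatation of $G_{k\pot{t}}$ along the same combinatorial centres, which is precisely the Bruhat--Tits parahoric group scheme $\calG_{\bf f, k}$ for $G \otimes_\bbZ k\rpot{t}$ (using the standard description of parahorics of split groups as iterated dilatations / as the smooth affine models with the prescribed valued root datum); this gives property (ii) for $k = \bbF_p$ and also recovers the generic fiber $\calG_{\bf f_\bbQ}$ for $k = \bbQ$. (3) Connectedness and smoothness of the fibers: smoothness is preserved under dilatation along a smooth centre in a smooth group, and fiberwise connectedness follows because each geometric fiber is the corresponding Bruhat--Tits group scheme with connected fibers, which is connected by construction. (4) Uniqueness: given two such extensions $\calG_{\bf f}$ and $\calG_{\bf f}'$, the identity on the common generic fiber $G_{\bbZ\rpot{t}}$ extends (by the N\'eron-type universal property of dilatations, or by the separatedness/flatness argument: both are flat over $\bbZ\pot{t}$, agree over $\bbZ\rpot{t}$ which is schematically dense, and are affine of finite type) to a morphism $\calG_{\bf f} \to \calG_{\bf f}'$, and symmetrically, and the composites are the identity by density of the generic fiber; alternatively, uniqueness can be extracted from \cite[Section~3]{Lou23} directly.

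I expect the main obstacle to be step (2): verifying that the iterated-dilatation model I build over $\bbZ\pot{t}$ specializes, fiber by fiber and on the nose, to the Bruhat--Tits parahoric with connected fibers over each $\bbF_p\pot{t}$, including the small primes $p$ dividing the order of the fundamental group or the torsion primes of $G$. The subtlety is that the ``obvious'' combinatorial recipe (dilatation along the mod-$t$ reduction of the subgroup generated by $T$ and the nonnegative affine root groups) is manifestly independent of the residue characteristic, but one must check it genuinely produces the \emph{connected} Bruhat--Tits group scheme and not some non-flat or disconnected variant, and that no characteristic-dependent correction is needed. Here I would lean on the fact that $G$ is \emph{split} over $\bbZ$ (so there is no issue of quasi-split forms or Galois descent, and the valued root datum is the constant one), together with Louren\c{c}o's general construction \cite[Section~3]{Lou23}, which already handles exactly this kind of spreading-out of parahorics over a Dedekind-type base; invoking that reference for the hard compatibility statement, while using the explicit dilatation description to make properties (i)--(ii) and uniqueness transparent, seems the cleanest route. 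The remaining steps are essentially formal consequences of standard properties of dilatations (flatness, smoothness, base change, and the universal property) as recorded in \cite{BLR} or \cite[Section~3]{Lou23}.
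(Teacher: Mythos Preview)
Your proposal is correct and follows essentially the same approach as the paper: both invoke the Pappas--Zhu construction (iterated dilatations of the constant Chevalley model along parabolic-type subgroups at $t=0$) and note that it carries over from the original base $\calO[t]$ to $\bbZ\pot{t}$. The paper's proof is simply a citation to \cite[Section~4.2.2]{PZ13} together with the remark that \cite[Section~3.9.4]{BT84} supplies what is needed to make the argument work over $\bbZ\pot{t}$; you have unpacked this into an explicit dilatation recipe and leaned on \cite[Section~3]{Lou23} for the base-change compatibility and uniqueness, which is a perfectly good substitute for the Bruhat--Tits reference.

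Two minor points worth tightening. First, your description of the dilatation centre as ``the image of $B_\bbZ$'' is only right for the Iwahori facet; in general the first centre is the standard parabolic $P_{\bf f} \subset G_\bbZ$ whose type is determined by the walls of $\bf a$ containing $\bf f$, and subsequent centres are read off from the filtration of the valued root datum (this is exactly what \cite[Section~3.9.4]{BT84} packages). Second, your schematic-density argument for uniqueness is not quite enough on its own over a two-dimensional base like $\bbZ\pot{t}$: one really needs the \'etoff\'e/extension property characterizing Bruhat--Tits schemes (as in \cite{BT84}) or the universal property of dilatations, which you do mention as an alternative---so just drop the density sketch and cite \cite{Lou23} or the dilatation universal property directly.
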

 \begin{proof} This is proven in \cite[Section~4.2.2]{PZ13}. Note that the base ring in {\it loc.~cit.} is the polynomial ring $\calO[t]$, where $\calO$ is discretely valued. The same proof remains valid over the base ring $\bbZ\pot{t}$ using \cite[Section~3.9.4]{BT84}. 
 \end{proof}
 
 For each ${\bf f}$, we define the ``parahoric'' subgroup $L^+\calG_{\bf f} \subset LG_{\mathbb Z}$, and we often abbreviate by writing $\calP_\bbZ := L^+\calG_{\bf f}$.  This has the property that for each homomorphism $\mathbb Z \rightarrow k$ for $k$ a field, we have $\calP_{\bbZ} \otimes_{\bbZ} k \cong \calP_k$, where the latter is the object defined earlier when working over the field $k$.  We define the (partial) affine flag variety
 $$\Flag_{\calP,\bbZ} = \left(LG_\bbZ/ \calP_\bbZ\right)^{\et},$$
 the \'{e}tale sheafification of the quotient presheaf on ${\Affine}_\mathbb Z$. This is represented by an ind-projective ind-scheme over $\mathbb Z$; see \cite[Corollary~3.11]{HR20}, where the proof is given for objects defined over $\mathcal O[t]$ for any noetherian ring $\mathcal O$ -- a similar proof works in our setting over $\bbZ\pot{t}$.

We denote the base point in $\Flag_{\calP,\bbZ}$ by $e_{\calP, \bbZ}$.

We have a notion of a negative parahoric loop group and a corresponding open cell in $\Flag_{\calP, \bbZ}$.  We define $L^{--}G_\bbZ := \ker(L^-G_\bbZ \rightarrow G_\bbZ)$, $t^{-1} \mapsto 0$, where $L^-G_\bbZ(R) = G(R[t^{-1}])$. Following \cite{dHL18}, we define $L^{--}\calG_{\bf a, \bbZ} = L^{--}G_{\bbZ} \rtimes \overline{U}_\bbZ$.  Then for any facet ${\bf f}$ in the closure of ${\bf a}$, we define the negative parahoric loop group
\begin{equation} \label{neg_parahoric_def}
L^{--}\calG_{\bf f,\bbZ} := \bigcap_{w \in W_{\bf f}} \,^w(L^{--}\calG_{\bf a, \bbZ}),
\end{equation}
the intersection being taken in $LG_\bbZ$.

\begin{lem}  \label{big_cell_Z} The multiplication map $L^{--}\calG_{\bf f, \bbZ} \times L^+\calG_{\bf f, \bbZ} ~ \rightarrow ~ LG_\bbZ$ is representable by a quasi-compact open immersion.
\end{lem}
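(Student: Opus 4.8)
The plan is to reduce the assertion to the corresponding statement for the affine flag variety, and then to deduce the latter field by field from the characteristic-free version over a field proved in \cite[Theorem~2.3.1]{dHL18}. For the reduction, introduce the orbit map $j\colon L^{--}\calG_{\bf f,\bbZ} \to \Flag_{\calP,\bbZ}$, $g \mapsto g\,e_{\calP,\bbZ}$, and the projection $\pi\colon LG_\bbZ \to \Flag_{\calP,\bbZ}$. The fibre product $LG_\bbZ \times_{\Flag_{\calP,\bbZ}} L^{--}\calG_{\bf f,\bbZ}$ formed along $\pi$ and $j$ consists of pairs $(h,g)$ with $g^{-1}h$ lying in the stabiliser $L^+\calG_{\bf f,\bbZ}$ of $e_{\calP,\bbZ}$, hence is identified with $L^{--}\calG_{\bf f,\bbZ} \times L^+\calG_{\bf f,\bbZ}$ via $(h,g)\mapsto (g,g^{-1}h)$, in such a way that the projection to $LG_\bbZ$ corresponds to the multiplication map $(g,p)\mapsto gp$. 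Thus the multiplication map is a base change of $j$, and since quasi-compact open immersions are stable under base change, it suffices to prove that $j$ is representable by a quasi-compact open immersion.

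First I would check that $j$ is a monomorphism of étale sheaves; as $L^{--}\calG_{\bf f,\bbZ}$ is a group, this amounts to the triviality of $L^{--}\calG_{\bf f,\bbZ}(R) \cap L^+\calG_{\bf f,\bbZ}(R)$ inside $LG_\bbZ(R) = G(R\rpot{t})$ for every $\bbZ$-algebra $R$. Since ${\bf f}$ lies in the closure of ${\bf a}$, the group scheme $\calG_{\bf f}$ is a dilatation of $G_{\bbZ\pot{t}}$ along the parabolic $P_{\bf f}\supseteq B$, so $L^+\calG_{\bf f,\bbZ}(R)\subseteq G(R\pot{t})$, while by the definition \eqref{neg_parahoric_def} one has $L^{--}\calG_{\bf f,\bbZ}(R) \subseteq L^{--}\calG_{\bf a,\bbZ}(R) \subseteq G(R[t^{-1}])$. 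Using $R[t^{-1}] \cap R\pot{t} = R$ inside $R\rpot{t}$ and the affineness of $G$, such an intersection lies in $G(R)$; a short computation with the negative parahoric loop group and the open Bruhat cell of $G$ then shows it is contained in $\overline{N}_{P_{\bf f}}(R) \cap P_{\bf f}(R) = \{1\}$, where $\overline{N}_{P_{\bf f}}$ denotes the unipotent radical of the parabolic opposite to $P_{\bf f}$. This is the integral incarnation of the argument behind \cite[Theorem~2.3.1]{dHL18}; no perfectness hypothesis on residue fields enters (\textit{cf.}\ Remark~\ref{k_perf_rem}).

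To see that $j$ is representable by a quasi-compact open immersion, write $\Flag_{\calP,\bbZ} = \bigcup_n Z_n$ as the increasing union of its $\bbZ$-flat projective Schubert schemes, and $L^{--}\calG_{\bf f,\bbZ} = \bigcup_m C_m$ as the increasing union of $\bbZ$-flat affine closed subschemes of finite type over $\bbZ$ (this uses that $L^{--}\calG_{\bf f,\bbZ}$ is a closed sub-ind-scheme of $L^{--}\calG_{\bf a,\bbZ} = L^{--}G_\bbZ \rtimes \overline{U}_\bbZ$, which is ind-finite-type since $L^{--}G_\bbZ = \ker(L^-G_\bbZ \to G_\bbZ)$ is). As each $Z_n \hookrightarrow \Flag_{\calP,\bbZ}$ is a closed immersion, $j^{-1}(Z_n)$ is a closed sub-ind-scheme of $L^{--}\calG_{\bf f,\bbZ}$ carrying a monomorphism to the noetherian scheme $Z_n$. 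The key point is that $j^{-1}(Z_n)$ is already representable by a finite-type $\bbZ$-scheme $\calW_n$, i.e.\ $j^{-1}(Z_n) = j^{-1}(Z_n)\cap C_m$ for $m \gg 0$ (see the next paragraph). Granting this, $\calW_n \to Z_n$ is a monomorphism of finite-type $\bbZ$-schemes whose base change to any field $k$ is the inclusion $j_k^{-1}(Z_{n,k}) \hookrightarrow Z_{n,k}$, which is an open immersion by the field case \cite[Theorem~2.3.1]{dHL18} (it is enough to take $k=\bbQ$ and $k=\bbF_p$). A monomorphism, locally of finite presentation, whose fibres over $\Spec\bbZ$ are all flat is itself flat with flat total space (fibrewise criterion of flatness, EGA IV, 11.3.10, using that $Z_n$ is $\bbZ$-flat); being a flat, locally finitely presented monomorphism, it is an open immersion (EGA IV, 17.9.1). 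Taking the union over $n$ gives the claim, and together with the first paragraph this proves the lemma.

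The hard part is the representability and quasi-compactness of $j^{-1}(Z_n)$ — equivalently, that the chain $j^{-1}(Z_n)\cap C_m$ stabilises in $m$ \emph{uniformly in the residue characteristic}. Over $\bbQ$ this is part of the field case (the big cell meets every Schubert variety in a quasi-compact open). To propagate it over $\bbZ$ I would either spread the rational picture out to a quasi-compact open of some $Z_{n,\bbZ[1/N]}$ representing $j^{-1}(Z_n)_{\bbZ[1/N]}$, check after enlarging $N$ that it computes the big cell in every residue characteristic prime to $N$, and dispose of the finitely many primes dividing $N$ via the field case over $\bbF_p$; or, more structurally, read off from the root-theoretic description of $L^{--}\calG_{\bf f,\bbZ}$ (the integral analogue of the constructions in \cite[Section~3]{dHL18}) that $j^{-1}(Z_n)$ is cut out inside $L^{--}\calG_{\bf f,\bbZ}$ by a combinatorial condition on affine roots independent of the base, so that a single $m$ works over all of $\bbZ$. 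In either case one concludes with the elementary observation that a quasi-coherent module which vanishes after $\otimes_\bbZ\bbQ$ and after $\otimes_\bbZ\bbF_p$ for every prime $p$ must vanish; applied to the ideal of $j^{-1}(Z_n)\cap C_m$ in $j^{-1}(Z_n)\cap C_{m'}$ for $m' > m \gg 0$, this forces the chain to stabilise.
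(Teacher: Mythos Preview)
The paper itself gives no self-contained proof, deferring to \cite[Lemma~3.6]{HLR} (over Witt vectors) and to Louren\c{c}o \cite[Corollary~4.2.11]{Lou23}. Your strategy---reduce to the orbit map $j$, then upgrade the known field case via a fibrewise argument---is natural and probably close in spirit to what those references do, but the execution has genuine gaps.

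First, in your monomorphism argument the assertion that $\calG_{\bf f}$ is a dilatation of $G_{\bbZ\pot{t}}$ along a parabolic, hence that $L^+\calG_{\bf f,\bbZ}(R) \subseteq G(R\pot{t})$, is false for facets ${\bf f}$ whose closure does not contain ${\bf 0}$. Already for $G = \SL_2$ and ${\bf f}$ the non-hyperspecial vertex of ${\bf a}$, the parahoric contains $\left(\begin{smallmatrix} 1 & 0 \\ t^{-1} & 1 \end{smallmatrix}\right) \notin \SL_2(R\pot{t})$. The monomorphism claim is still correct, but it needs a different argument---for instance, use $L^{--}\calG_{\bf f} \subseteq L^{--}\calG_{\bf a}$ together with the Iwahori big cell, and then handle the passage from $\calB$ to $\calP$ via the decomposition $L^+\calG_{\bf f} = \bigcup_{w \in W_{\bf f}} \calB_\bbZ w \calB_\bbZ$ and the definition \eqref{neg_parahoric_def}.

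Second, you misapply EGA~IV, 11.3.10. That criterion says: for $X \to Y \to S$ with $Y/S$ flat, one has $X/Y$ flat if and only if $X/S$ is flat \emph{and} every fibre $X_s/Y_s$ is flat. To conclude $\calW_n/Z_n$ is flat you therefore need $\calW_n/\bbZ$ flat as a \emph{hypothesis}, not as an output; you only know $\calW_n$ is closed in some $\bbZ$-flat $C_m$, which does not suffice. A toy obstruction: $\Spec(\bbZ/p) \hookrightarrow \Spec(\bbZ)$ is a finite-type monomorphism whose fibre over every point of $\Spec(\bbZ)$ is an open immersion, yet it is not an open immersion. One could try to repair this by replacing $\calW_n$ with the flat closure of $\calW_{n,\bbQ}$ in $Z_n$ and then checking it computes $j^{-1}(Z_n)$ fibrewise, but that again requires the uniform control you defer to the last paragraph.

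Third, that last paragraph is, as you acknowledge, only a sketch: neither the spreading-out route nor the root-combinatorial route is carried through, and the final ``ideal vanishes fibrewise hence vanishes'' step presupposes exactly the uniform stabilisation you are trying to establish. This is where the actual content of the references lies.
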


\begin{proof}
This is proved in the same way as \cite[Lemma~3.6]{HLR}, which proves the analogous result when the base ring is a ring of Witt vectors $\bbW$ instead of $\bbZ$; the same argument works for our group schemes $\calG_{\bf f, \bbZ}$ over $D_\bbZ := \bbZ\pot{t}$. We omit the details. A proof in a more general context is due to Louren\c{c}o; see \cite[Corollary~4.2.11]{Lou23}, which also points out that the multiplication morphism is affine (hence automatically quasi-compact). 
\end{proof}

From now on, we often write $\calG$ for $\calG_{\bf f}$ and $\calP_\bbZ$ for $L^+\calG_{\bf f}$.

We recall the interpretation of partial affine flag varieties in terms of suitable spaces of torsors. For any ring $R$, set $D_R = \Spec(R\pot{t})$ and $D^*_R = \Spec(R\rpot{t})$. Recall that we define the sheaf $\Gr_{\calG}$ on ${\Affine}_\bbZ$ to be the functor sending $R$ to the set $\Gr_\calG(R)$ of isomorphism classes of pairs $(\calE, \alpha)$, where $\calE$ is a right \'{e}tale torsor for $\calG_{D_R} = \calG \times_{D_\bbZ} D_R$ over $D_R$, and where $\alpha \in \calE(D^*_R)$, that is, $\alpha$ is an isomorphism of $\calG_{D^*_R}$-torsors $\calE_0|_{D^*_R}  \overset{\lowsim}{\rightarrow} \calE |_{D^*_R} $, where $\calE_0$ is the trivial $\calG_{D_R}$-torsor.   The left action of $g \in LG(R)$ on $\Gr_\calG(R)$ sends $(\calE, \alpha)$ to $(\calE, \alpha \circ g^{-1})$. Then $\Gr_\calG(R) \cong \Flag_{\calP, \bbZ}(R)$, functorially in $R$ (see \textit{e.g.} \cite[Lemma~3.4]{HR20}).

\begin{rmk} \label{Ces_rem}
For the groups $G$ over $\bbZ$ we consider, one can show using negative parahoric loop groups that the morphism $LG_\bbZ \rightarrow \Flag_{\calP, \bbZ}$ has sections locally in the Zariski topology, and hence for any semi-local ring~$R$, we have $\Flag_{\calP, \bbZ}(R) = LG_\bbZ(R)/\calP_\bbZ(R)$.  This can be seen by reducing to the case of fields, as in \cite[Section~4.3]{RS20}.  One can also deduce it from a recent result of \v{C}esnavi\v{c}ius \cite[Theorem~1.7]{Ces22} that the affine Grassmannian $\Gr_{G, \bbZ}$ agrees with the Zariski sheafification of the presheaf quotient $LG_{\bbZ}/L^+G_\bbZ$.  To use this to prove the corresponding result for a general parahoric $\calP_\bbZ$, one first deduces the result for $\calP_\bbZ = \calB_\bbZ$, using the lifting for $\calP_\bbZ = L^+G_\bbZ$ and the fact that the fiber of $\Flag_{\calB, \bbZ} \rightarrow \Gr_{G, \bbZ}$ over the base point is $(G/B)_\bbZ$ and $G \rightarrow (G/B)_\bbZ$ is Zariski-locally trivial.  Then, finally, one uses the topological surjectivity of $\Flag_{\calB, \bbZ} \rightarrow \Flag_{\calP, \bbZ}$ to prove that a cover given by translates of the big cell in the source maps to a cover of translates of the big cell in the target. In fact one can use translates $wL^{--}\calG_{\bf a} e_{\calB, \bbZ}$ for $w \in W$ to cover $\Flag_{\calB, \bbZ}$, thanks to the Birkhoff decomposition of $LG$ over fields (see \cite[Lemma~4]{Fal03}). I am grateful to Thibaud van den Hove for a clarifying discussion about this remark, which we shall not need in the rest of this article. 
\end{rmk}

\begin{lem} \label{new_base_pt}
Fix a ring $R$ and $(\calE, \alpha) \in \Gr_{\calG}(R)$.  Then the presheaf\, $\Gr_{\calG, \calE, \alpha}$ sending $\Spec(R') \rightarrow \Spec(R)$ to the set of isomorphism classes of pairs $(\calE', \alpha')$ consisting of a $\calG_{D_{R'}}$-torsor $\calE' \rightarrow D_{R'}$ and an isomorphism of $\calG_{D^*_{R'}}$-torsors $\alpha'\colon \calE_{D^*_{R'}} \overset{\lowsim}{\rightarrow} \calE'_{ D^*_{R'}}$ is representable by an ind-projective ind-flat ind-scheme over $R$.
\end{lem}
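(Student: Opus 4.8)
The idea is to reduce $\Gr_{\calG,\calE,\alpha}$ to the untwisted affine flag variety by a ``change of base point.'' Fix the datum $(\calE,\alpha)\in\Gr_{\calG}(R)$, so $\alpha\colon\calE_{0}|_{D^*_R}\overset{\sim}{\to}\calE|_{D^*_R}$. I claim there is a canonical isomorphism of presheaves on ${\Affine}_R$
$$
\Gr_{\calG,\calE,\alpha}\ \overset{\sim}{\longrightarrow}\ \Gr_{\calG}\times_{\Spec\bbZ}\Spec R ,
$$
where I use that $(\Gr_{\calG}\times_{\Spec\bbZ}\Spec R)(R')=\Gr_{\calG}(R')$ for any $R$-algebra $R'$. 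On $R'$-points the map sends the class of $(\calE',\alpha')$, with $\alpha'\colon\calE_{D^*_{R'}}\overset{\sim}{\to}\calE'_{D^*_{R'}}$, to the class of $(\calE',\,\alpha'\circ\alpha_{R'})$, where $\alpha_{R'}\colon\calE_{0,D^*_{R'}}\overset{\sim}{\to}\calE_{D^*_{R'}}$ is the base change of $\alpha$ along $R\to R'$.

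First I would check this is well defined and bijective. An isomorphism $\calE'\overset{\sim}{\to}\calE''$ of $\calG_{D_{R'}}$-torsors is compatible with the trivializations $\alpha',\alpha''$ over $D^*_{R'}$ if and only if it is compatible with $\alpha'\circ\alpha_{R'}$ and $\alpha''\circ\alpha_{R'}$; hence the assignment descends to isomorphism classes. It is visibly natural in $R'$ over $R$, and it is bijective with inverse $(\calE'',\beta)\mapsto(\calE'',\,\beta\circ\alpha_{R'}^{-1})$. Note also that the formula defining $\Gr_{\calG,\calE,\alpha}$ differs from that of $\Gr_{\calG}$ only by replacing the trivial torsor $\calE_0$ by $\calE$; since $\Gr_{\calG}$ is already an \'etale sheaf (torsors together with trivializations over $D^*$ glue \'etale-locally), the same is true of $\Gr_{\calG,\calE,\alpha}$, so the above identification is an isomorphism of sheaves and representability transfers directly, with no further sheafification needed.

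Finally I would conclude from properties of the target. The sheaf $\Gr_{\calG}=\Flag_{\calP,\bbZ}$ is represented by an ind-projective ind-scheme over $\bbZ$ by \cite[Corollary~3.11]{HR20}, and it is moreover ind-flat over $\bbZ$ --- here the hypothesis that $\calG=\calG_{\bf f}$ is \emph{parahoric} is essential. (One way to see ind-flatness: $LG_\bbZ$ is ind-flat over $\bbZ$ and $LG_\bbZ\to\Flag_{\calP,\bbZ}$ is faithfully flat, being \'etale-locally a trivial torsor under the flat group scheme $\calP_\bbZ=L^+\calG_{\bf f}$, so flatness over $\bbZ$ descends from $LG_\bbZ$ to $\Flag_{\calP,\bbZ}$; alternatively it follows from the flatness over $\bbZ$ of the exhausting family of affine Schubert schemes, cf.\ \cite{PZ13} applied over $\bbZ\pot{t}$ as in Lemma~\ref{PZ_ext_lem}.) Since base change along $\Spec R\to\Spec\bbZ$ preserves ind-projectivity and ind-flatness, $\Gr_{\calG}\times_{\Spec\bbZ}\Spec R$, and hence $\Gr_{\calG,\calE,\alpha}$, is an ind-projective ind-flat ind-scheme over $R$. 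The ``change of base point'' step and the torsor bookkeeping are entirely formal; the only non-formal ingredient, and thus the main point to pin down, is the ind-flatness of $\Flag_{\calP,\bbZ}$ over $\bbZ$, which is a known but genuinely parahoric-specific fact.
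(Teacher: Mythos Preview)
Your proof is correct and takes essentially the same approach as the paper: both establish the change-of-basepoint isomorphism $(\calE',\alpha')\mapsto(\calE',\alpha'\circ\alpha)$ onto $\Gr_{\calG}\times_{\Spec\bbZ}\Spec R$ and then invoke the ind-projectivity and ind-flatness of $\Gr_{\calG}$ over $\bbZ$. One caveat: your descent argument for ind-flatness presupposes that $LG_\bbZ$ itself is ind-flat over $\bbZ$, which is not obviously easier than the assertion for $\Gr_{\calG}$; the paper instead cites \cite[Propositions~8.8 and~8.9]{HLR} (reducing to the case $\calP_\bbZ=L^+G_\bbZ$), which is closer in spirit to your alternative via flatness of the Schubert schemes.
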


\begin{proof}
If we fix a representative $(\calE, \alpha)$ within its isomorphism class, then the map 
$$(\calE', \alpha') \longmapsto (\calE', \alpha' \circ \alpha)$$ is a well-defined isomorphism of presheaves $\Gr_{\calG, \calE, \alpha} \overset{\lowsim}{\rightarrow} \Gr_{\calG} \times \Spec(R)$.  Now recall that $\Gr_{\calG}$ is ind-flat over $\mathbb Z$ by adapting the proof of \cite[Proposition~8.9]{HLR}, or by reducing to the case $\calP_\bbZ = L^+G_{\bbZ}$ and then invoking \cite[Prop,\,8.8]{HLR}.
\end{proof}

\subsection{Ingredients needed for paving over $\boldsymbol{\mathbb Z}$}

\subsubsection{Iwahori decompositions of $\boldsymbol{\calB_\bbZ}$ and $\boldsymbol{\mathcal U_\bbZ}$}

Our choice of base Iwahori subgroup $\calB_\bbZ$ is compatible with our choice of Borel subgroup $B = TU$ over $\bbZ$ in the following sense: For any algebra $R$, we have
$$
\calB_\bbZ(R) = \left\{ g \in L^+G_\bbZ(R) \, | \, \bar{g} \in B(R) \right\},
$$
where $\bar{g}$ is the image of $g$ under the canonical projection $L^+G_\bbZ(R)\rightarrow G(R)$. We define the pro-unipotent radical $\calU_\bbZ \subset \calB_\bbZ$ by requiring $\calU_\bbZ(R)$ to be the preimage of $U(R)$ under the projection $g \mapsto \bar{g}$.  Let $\calT_\bbZ$ denote the group scheme $\calT_\bbZ = L^+T_\bbZ$. Let $\overline{B} = T\overline{U}$ be the Borel subgroup such that $B \cap \overline{B} = T$. For any integer $m \geq 1$, let $L^{(m)}G_\bbZ(R)$ denote the kernel of the natural homomorphism $L^+G_\bbZ(R) \rightarrow G(R/t^mR)$.  Write $\calT^{(1)}_\bbZ := L^{(1)}T_\bbZ$.

\begin{prop} \label{Iwahori_decomp_Z}
The group schemes $\calB_\bbZ$ and $\calU_\bbZ$ possess Iwahori decompositions with respect to $B = TU$; that is, there are unique factorizations of functors
\begin{align}\label{Iwahori_decomp_Z_eq}
\calB_\bbZ &= \left(\calB_\bbZ \cap L\overline{U}_\bbZ\right) \cdot \calT_\bbZ \cdot \left(\calB_\bbZ \cap LU_\bbZ\right), \\
\calU_\bbZ &= \left(\calU_\bbZ \cap L\overline{U}_\bbZ\right) \cdot \calT^{(1)}_\bbZ \cdot \left(\calB_\bbZ \cap LU_\bbZ\right).
\end{align}
\end{prop}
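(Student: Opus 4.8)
The plan is to deduce both decompositions directly from the big cell of $G$ over $\bbZ$ together with the $t$-adic completeness of the rings $R\pot{t}$; no building is required. First I would recall from the structure theory of split reductive group schemes (see \textit{e.g.} \cite{Co14}) that multiplication defines an open immersion $\overline U \times T \times U \hookrightarrow G$ onto an open subscheme containing the identity section --- the \emph{big cell} --- and that the closed subgroups $B = TU$ and $U$ are both contained in it. Since $\overline U \hookrightarrow G$ and $U \hookrightarrow G$ are closed immersions with $\overline U \cap B = \overline U \cap U = \{1\}$, a direct check on $R$-points identifies the functorial intersections occurring in the statement:
$$
\calB_\bbZ \cap L\overline U_\bbZ = \calU_\bbZ \cap L\overline U_\bbZ = L^{(1)}\overline U_\bbZ, \qquad \calB_\bbZ \cap LU_\bbZ = \calU_\bbZ \cap LU_\bbZ = L^+U_\bbZ,
$$
and of course $\calT_\bbZ = L^+T_\bbZ$ and $\calT^{(1)}_\bbZ = L^{(1)}T_\bbZ$. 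Thus the two claimed factorizations amount exactly to the assertions that the multiplication maps
$$
L^{(1)}\overline U_\bbZ \times L^+T_\bbZ \times L^+U_\bbZ \longrightarrow \calB_\bbZ, \qquad L^{(1)}\overline U_\bbZ \times L^{(1)}T_\bbZ \times L^+U_\bbZ \longrightarrow \calU_\bbZ
$$
are isomorphisms of functors on ${\Affine}_\bbZ$, and for that it suffices to check bijectivity on $R$-points for every $\bbZ$-algebra $R$.

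For surjectivity the one nontrivial input is the elementary observation that, for any ring $R$, every open subscheme of $\Spec R\pot{t}$ containing $V(t)$ equals $\Spec R\pot{t}$: since $R\pot{t}$ is $t$-adically complete, $1 + tR\pot{t} \subseteq (R\pot{t})^\times$, so $t$ lies in the Jacobson radical, every maximal ideal contains $t$, and hence every nonempty closed subset of $\Spec R\pot{t}$ meets $V(t)$. Granting this, I would take $g \in \calB_\bbZ(R)$, i.e. $g \in G(R\pot{t})$ whose reduction $\bar g$ modulo $t$ lies in $B(R)$. Since $\bar g$ lies in the big cell, the $g$-preimage of the big cell is an open subscheme of $\Spec R\pot{t}$ containing $V(t)$, hence all of it, so $g$ factors through the big cell and can be written uniquely as $g = v\tau u$ with $v \in \overline U(R\pot{t})$, $\tau \in T(R\pot{t})$, $u \in U(R\pot{t})$. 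Reducing modulo $t$ and using uniqueness of big-cell coordinates over $R$ together with $\bar g \in B(R) = \{1\}\cdot T(R)\cdot U(R)$ forces $\bar v = 1$, i.e. $v \in L^{(1)}\overline U_\bbZ(R)$, while $\tau$ and $u$ lie in $L^+T_\bbZ(R)$ and $L^+U_\bbZ(R)$ with no further constraint. This gives surjectivity of the first multiplication map. Running the same argument for $g \in \calU_\bbZ(R)$, where now $\bar g \in U(R) = \{1\}\cdot\{1\}\cdot U(R)$, forces in addition $\bar\tau = 1$, i.e. $\tau \in L^{(1)}T_\bbZ(R)$, which gives surjectivity of the second.

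Injectivity of both maps I would obtain at once: the big-cell multiplication $\overline U \times T \times U \to G$ is an open immersion, hence a monomorphism, hence injective on $R\pot{t}$-points; each of our two multiplication maps factors through it via the inclusions $L^{(1)}\overline U_\bbZ \hookrightarrow L^+\overline U_\bbZ$ and $L^{(1)}T_\bbZ \hookrightarrow L^+T_\bbZ$, so it too is injective on $R$-points. Being bijective on $R$-points for every $R$, each multiplication map is an isomorphism of functors, which is precisely the asserted unique factorization (and, source and target being pro-smooth affine group schemes over $\bbZ$, an isomorphism of group schemes). I expect there to be no serious obstacle here: the only fiddly point is the bookkeeping in the first paragraph, correctly identifying the functorial intersections $\calB_\bbZ \cap L\overline U_\bbZ$, $\calB_\bbZ \cap LU_\bbZ$, and so on, with the congruence loop groups $L^{(1)}\overline U_\bbZ$ and $L^+U_\bbZ$, while the two geometric ingredients --- the big cell over $\bbZ$ and the $t$-adic completeness of $R\pot{t}$, which together replace the usual building-theoretic argument --- are entirely standard.
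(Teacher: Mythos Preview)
Your argument is correct and is genuinely different from the paper's. The paper proves surjectivity by a successive-approximation scheme: given $g \in \calB_\bbZ(R)$, it first strips off $\bar g \in B(R)$ to land in $L^{(1)}G_\bbZ(R)$, and then climbs the filtration $\cdots \subset L^{(m+1)}G_\bbZ \subset L^{(m)}G_\bbZ \subset \cdots$ one step at a time, using the Lie-algebra decomposition $\Lie(G) = \Lie(\overline U) \oplus \Lie(T) \oplus \Lie(U)$ at each graded piece and collecting $t$-adically convergent products in $L^{(1)}\overline U$, $L^{(1)}T$, $L^{(1)}U$. Your route bypasses all of this by a single geometric stroke: since $t$ lies in the Jacobson radical of $R\pot{t}$, the preimage of the big cell under $g$ is an open containing $V(t)$ and hence is everything, so $g$ lands in the big cell outright and the factorization is immediate. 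This is both shorter and more conceptual; it also yields uniqueness for free from the monomorphism property of the big-cell immersion, whereas the paper has to remark on uniqueness separately.

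One cosmetic point: in your final parenthetical you call the result ``an isomorphism of group schemes,'' but the multiplication map $L^{(1)}\overline U_\bbZ \times L^+T_\bbZ \times L^+U_\bbZ \to \calB_\bbZ$ is not a group homomorphism (the source is only a scheme, not a group), so you mean an isomorphism of $\bbZ$-schemes. This does not affect the argument.
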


\begin{proof}
First we note that the uniqueness in the decomposition follows from the uniqueness of the decomposition in the big cell in $\overline{U} \cdot T \cdot U$ in $G$.

We shall prove only the first decomposition (the second is completely similar). Consider $g \in \calB_\bbZ(R)$, with reduction modulo $t$ given by $\bar{g} = \bar{b}$ for some $b \in B(R) \subset L^+B_\bbZ(R)$. Then $g^{(1)} := g b^{-1} \in L^{(1)}G_\bbZ(R)$, and it suffices to show this element lies in 
\begin{equation} \label{level_1_decomp}
\left(\calB_\bbZ \cap L^{(1)}\overline{U}_\bbZ\right) \cdot \calT^{(1)}_\bbZ \cdot  \left(\calB_\bbZ \cap L^{(1)}U_\bbZ\right).
\end{equation}
The filtration $\cdots \subset L^{(m+1)}G_\bbZ \subset L^{(m)}G_\bbZ \subset \cdots \subset L^+G_\bbZ$ has abelian quotients isomorphic to $\Lie(G)_\bbZ = \Lie(\overline{U})_\bbZ \oplus \Lie(T)_\bbZ \oplus \Lie(U)_\bbZ$.  We claim that we can write
$$
g^{(1)} = \lim_{m \rightarrow \infty} \bar{u}_m \cdot t_m \cdot u_m
$$
with $\bar{u}_m, t_m, u_m$ lying in the $R$-points of the appropriate factors of (\ref{level_1_decomp}), and such that the limit converges in the $t$-adic topology. Indeed, decomposing the image modulo $t^2$ of $g^{(1)}$ in terms of the Lie algebra and lifting, we can write
$$
g^{(1)} ~ = ~ \bar{u}^{(1,2)} \cdot g^{(2)} \cdot t^{(1,2)} \cdot u^{(1,2)}, 
$$
where $\bar{u}^{(1,2)} \in L^{(1)}\overline{U}_\bbZ$, $t^{(1,2)} \in L^{(1)}T_\bbZ$, and $u^{(1,2)} \in L^{(1)}U_\bbZ$ and where $g^{(2)} \in L^{(2)}G_\bbZ$.  Here we have used that $\bar{u}^{(1,2)}$ normalizes $L^{(2)}G_\bbZ$.  We then repeat this process with $g^{(2)}$ and get an expression
$$
g^{(1)} =  \left(\bar{u}^{(1,2)}\bar{u}^{(2,3)}\right) \cdot g^{(3)} \cdot \left(t^{(1,2)}t^{(2,3)}\right) \cdot \left(u^{(2,3)} u^{(1,2)}\right), 
$$
where $g^{(3)} \in L^{(3)}G_\bbZ$ and where $?^{(2,3)}$ refers to a component of $g^{(2)}$ lying in an appropriate $L^{(2)}?$ group, with $g^{(3)}$ viewed as an ``error term.'' Here we have used again the normality of $L^{(m)}G_\bbZ$ in $L^+G_\bbZ$, the commutativity of $L^+T_\bbZ$, and the fact that $L^+T_\bbZ$ normalizes each $L^{(m)}U_\bbZ$.  Continuing this, we define the sequences $\bar{u}^{(m-1,m)}$, $t^{(m-1,m)}$, $u^{(m-1, m)}$, and $g^{(m)}$ and then set
\begin{align*}
\bar{u}_m &= \bar{u}^{(1,2)} \cdots \bar{u}^{(m-1,m)}, \\
t_m &= t^{(1,2)} \cdots t^{(m-1,m)}, \\
u_m &= u^{(m-1,m)} \cdots u^{(1,2)}.
\end{align*}
In the $t$-adic topology, these products converge, and the terms $g^{(m)}$ approach the identity element $e \in G$. Hence this proves the claim, and thus the proposition.
\end{proof}

\subsubsection{Proposition~\ref{dHL.3.7.4} over $\bbZ$}

\begin{prop} \label{dHL.3.7.4_Z}
The analogue over $\mathbb Z$ of Proposition~\ref{dHL.3.7.4} holds.
\end{prop}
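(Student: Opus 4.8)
The assertion over $\bbZ$ is that, for ${\bf f}$ the facet of $\overline{\bf a}$ attached to $\calP_\bbZ$ and $v\in W$ arbitrary (with a fixed lift normalizing $T$), there is a factorization of group functors on ${\Affine}_\bbZ$
$$
\calU_\bbZ \;=\; \bigl(\calU_\bbZ \cap {}^v L^{--}\calG_{\bf f,\bbZ}\bigr)\cdot\bigl(\calU_\bbZ \cap {}^v L^+\calG_{\bf f,\bbZ}\bigr),
$$
together with an isomorphism of $\bbZ$-schemes $\calU_\bbZ \cap {}^v L^{--}\calG_{\bf f,\bbZ}\cong\prod_a U_{a,\bbZ}$, the product taken in any order over the positive affine roots $a$ with $v^{-1}a\overset{\bf f}{<}0$. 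The plan is to isolate two inputs, exactly as in the proof of \cite[Proposition~3.7.4]{dHL18}: the affine-root-group descriptions of the three group schemes $\calU_\bbZ$, $L^+\calG_{\bf f,\bbZ}$, $L^{--}\calG_{\bf f,\bbZ}$, and the combinatorial partition of the positive affine roots according to the sign of $v^{-1}(\cdot)$ on ${\bf f}$ --- but now to obtain the first input without recourse to a building.

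For the structural input I would proceed as follows. By Proposition~\ref{Iwahori_decomp_Z}, iterated across the filtration by congruence subgroups and combined with the commutator relations among the affine root groups (valid over $\bbZ$, and letting one reorder products of root groups indexed by a closed set of roots), $\calU_\bbZ$ is isomorphic as a $\bbZ$-scheme to $\prod_{a>0}U_{a,\bbZ}\times\calT^{(1)}_\bbZ$, the product over the positive affine roots in any order. For $L^+\calG_{\bf f,\bbZ}$ one has likewise $\prod_{a\overset{\bf f}{\geq}0}U_{a,\bbZ}\times L^+T_\bbZ$; the membership $U_{a,\bbZ}\subset L^+\calG_{\bf f,\bbZ}$ for $a\overset{\bf f}{\geq}0$, and its failure for $a\overset{\bf f}{<}0$, is checked by reduction to the corresponding classical facts over each field $k$, using the flatness of $\calG_{\bf f}$ over $\bbZ\pot{t}$ from Lemma~\ref{PZ_ext_lem} and the fact that $L^+\calG_{\bf f,\bbZ}\to LG_\bbZ$ is an immersion (Lemma~\ref{big_cell_Z}). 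For $L^{--}\calG_{\bf f,\bbZ}$ one unwinds (\ref{neg_parahoric_def}): the negative Iwahori $L^{--}\calG_{\bf a,\bbZ}=L^{--}G_\bbZ\rtimes\overline{U}_\bbZ$ is built, across the congruence filtration of $LG_\bbZ$, out of precisely the $U_{a,\bbZ}$ with $a<0$ on ${\bf a}$ and of the ``negative congruence torus'' $\ker\!\bigl(T(R[t^{-1}])\to T(R)\bigr)$; intersecting the $W_{\bf f}$-conjugates then cuts the root set down to $\{a : a\overset{\bf f}{<}0\}$ and leaves the negative congruence torus unchanged, translations in $W_{\bf f}$ centralizing it and finite reflections permuting its coordinates. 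Conjugating all three descriptions by $v$ replaces ``$\cdot\overset{\bf f}{\lessgtr}0$'' by ``$v^{-1}(\cdot)\overset{\bf f}{\lessgtr}0$'' and carries $L^+T_\bbZ$ and the negative congruence torus to themselves.

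Granting this, the conclusion is formal. Intersecting with $\calU_\bbZ$: since $\calU_\bbZ$ has torus part $\calT^{(1)}_\bbZ=L^{(1)}T_\bbZ$, which meets the negative congruence torus only in the identity, $\calU_\bbZ\cap{}^v L^{--}\calG_{\bf f,\bbZ}$ has root set $\{a>0 : v^{-1}a\overset{\bf f}{<}0\}$ and no torus part, while $\calU_\bbZ\cap{}^v L^+\calG_{\bf f,\bbZ}$ has root set $\{a>0 : v^{-1}a\overset{\bf f}{\geq}0\}$ and torus part $\calT^{(1)}_\bbZ$. Between them these two subfunctors account, once each, for every factor in the product description of $\calU_\bbZ$; so the product map of the proposition is, after reordering factors by the commutator relations, the isomorphism $\prod_{a>0}U_{a,\bbZ}\times\calT^{(1)}_\bbZ\overset{\lowsim}{\longrightarrow}\calU_\bbZ$, which simultaneously gives the factorization and part (b). Uniqueness of the two factors --- and the fact that the factorization is schematic, not merely ``on reduced loci'' --- follows independently from Lemma~\ref{big_cell_Z}: conjugating by $v$, the multiplication $L^{--}\calG_{\bf f,\bbZ}\times L^+\calG_{\bf f,\bbZ}\to LG_\bbZ$ is an open immersion. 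Alternatively, one may bypass the product descriptions of the parahoric and its opposite and instead run the successive-approximation argument of Proposition~\ref{Iwahori_decomp_Z} verbatim, peeling off in each layer $L^{(m)}G_\bbZ/L^{(m+1)}G_\bbZ\cong\Lie(G)_\bbZ$ the root spaces $\frakg_a$ according to whether $v^{-1}a$ is negative or nonnegative on ${\bf f}$, and passing to the $t$-adic limit.

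The step I expect to be the main obstacle is the structural input of the second paragraph: reading off, purely group-theoretically, which affine root groups lie in $L^{--}\calG_{\bf f,\bbZ}$ and in $L^+\calG_{\bf f,\bbZ}$, and tracking the two congruence tori carefully enough to be sure $\calU_\bbZ\cap{}^v L^{--}\calG_{\bf f,\bbZ}$ carries no torus contribution (so that part (b) really is a bare product of affine root groups). Over a field this is immediate from Bruhat--Tits theory; over $\bbZ$ it must be extracted from (\ref{neg_parahoric_def}) and from flatness, and it rests on the combinatorial identity $\{a : a\overset{\bf f}{<}0\}=\bigcap_{w\in W_{\bf f}}\{a : w^{-1}a<0\text{ on }{\bf a}\}$, which in turn uses only that ${\bf f}\subset\overline{\bf a}$ and that $s_a\in W_{\bf f}$ whenever $a$ vanishes identically on ${\bf f}$. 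Once this is in place, everything else runs parallel to the field case and to Proposition~\ref{Iwahori_decomp_Z}.
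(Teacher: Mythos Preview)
Your approach is correct in outline and reaches the same conclusion, but the route differs from the paper's and is heavier than necessary. You set out to establish full affine-root-group decompositions of all three players $\calU_\bbZ$, $L^+\calG_{\bf f,\bbZ}$, $L^{--}\calG_{\bf f,\bbZ}$ (including their torus parts), then intersect. The paper never describes the parahoric or its negative globally. Instead it works entirely inside $\calU_\bbZ$: from the Iwahori decomposition and the root-group filtrations in $U_\bbZ$, $\overline{U}_\bbZ$, it first writes $\calU_\bbZ = U_{a_1,\bbZ}\cdots U_{a_N,\bbZ}\cdot(\calU_\bbZ\cap{}^v\calP_\bbZ)$ with $a_1,\dots,a_N$ a \emph{finite} list of positive affine roots; it then fixes a total order $\preceq$ on positive affine roots (via a general point $x_0\in{\bf a}$) and uses the commutator relations to slide left, one at a time, those $U_{r_j}$ with $v^{-1}r_j\overset{\bf f}{<}0$, absorbing every commutator either into a later $U_{r_j}$ or into $\calU_\bbZ\cap{}^v\calP_\bbZ$. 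This terminates because the commutators produced are $\succ$-larger. Only at the very end does the big cell (Lemma~\ref{big_cell_Z}) enter, exactly as in your last paragraph, to upgrade the inclusion $\prod_r U_r\subset\calU_\bbZ\cap{}^v\overline{\calU}_{\calP,\bbZ}$ to an equality and to certify the decomposition.

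What this buys: the paper never needs your structural descriptions of $L^+\calG_{\bf f,\bbZ}$ or $L^{--}\calG_{\bf f,\bbZ}$, never handles an infinite product ``in any order'', and never has to argue the combinatorial identity for the intersection defining $L^{--}\calG_{\bf f,\bbZ}$; it only needs the elementary containments $U_{a,\bbZ}\subset{}^v\calP_\bbZ$ for $v^{-1}a\overset{\bf f}{\geq}0$ and $\calT^{(1)}_\bbZ\subset{}^v\calP_\bbZ$. Your approach is more conceptual, but the claims you flag as ``the main obstacle'' (parahoric root-group decomposition over $\bbZ$, torus bookkeeping in $L^{--}\calG_{\bf f,\bbZ}$) are precisely the steps the paper's argument sidesteps. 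Your alternative successive-approximation suggestion is closer in spirit, but the paper's finite reordering is still more economical than running the layer-by-layer limit.
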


\begin{proof}
There are only finitely many affine roots $a$ such that $U_{a,\bbZ}$ is contained in $\calU_\bbZ \cap \,^v\overline{\calU}_{\calP, \bbZ}$, namely the finitely many $a$ with $a > 0$ and $v^{-1}a \overset{\bf f}{<} 0$.  By the Iwahori decomposition Proposition~\ref{Iwahori_decomp_Z} and the root group filtrations in $U_\bbZ$ and $\overline{U}_\bbZ$, we easily see that there exist finitely many positive affine roots $a_1, \dots, a_N$ such that
\begin{equation} \label{product}
\calU_\bbZ = U_{a_1, \bbZ} \cdots U_{a_N, \bbZ} \,\, (\calU_\bbZ \cap\, ^v\calP_\bbZ).
\end{equation}
In what follows, we suppress the subscript $\mathbb Z$. Give a total order $\preceq$ to the set of positive affine roots $a_i$  in this list, by letting $a \prec b$ if and only if $a(x_0) < b(x_0)$ for a suitably general point $x_0 \in {\bf a}$.  Let $r_1 \prec r_2 \prec \cdots \prec r_M$ be the totally order subset of the $a_i$ with the property that $v^{-1}r_i \overset{\bf f}{<} 0$.  The root group $U_{r_1}$ appears finitely many times in (\ref{product}).   Starting from the left, we commute the first $U_{r_1}$ to the left past any preceding groups $U_b$.  By the commutator relations (\textit{e.g.} \cite[Equation~(3.6)]{dHL18}), in moving all the $U_{r_1}$ groups all the way to the left, we introduce finitely many additional affine root groups $U_c$ with $r_1 \prec c$.  Then we consider the part of the product which now involves only root groups  of the form $U_{r_2}, \dots, U_{r_M}$ and certain $U_c$ with $v^{-1}c \overset{\bf f}{\geq} 0$.  Then we repeat the above process with $r_2$ in place of $r_1$.  Continuing, we eventually move all the $U_r$ factors with $v^{-1}r \overset{\bf f}{<} 0$  all the way to the left.  We have proved that
\begin{equation} \label{first_decomp}
\mathcal U = \prod_{r} U_r \, (\calU \cap \,^v\calP), 
\end{equation}
where $r$ ranges over the affine roots with $r > 0 $ and $v^{-1}r \overset{\bf f}{<} 0$. We claim that the obvious inclusion $\prod_r U_r \subset \calU \cap  \, ^v\overline{\calU}_{\calP}$ is an equality and the resulting product is a decomposition.  Both statements follow easily using the theory of the big cell, Lemma~\ref{big_cell_Z}.
\end{proof}

\begin{cor} \label{BP_comp_Z}
The analogues over $\bbZ$ of Propositions~\ref{BP_comp} and~\ref{strat_triv} hold. 
\end{cor}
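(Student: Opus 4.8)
\emph{Strategy.}
Both assertions follow formally, mirroring the field-case proofs of Propositions~\ref{BP_comp} and~\ref{strat_triv}, once we feed in the two ingredients already established over $\bbZ$: the factorization and affine-root-group description of Proposition~\ref{dHL.3.7.4_Z}, and the big-cell open immersion of Lemma~\ref{big_cell_Z}. The plan is to prove the analogue of Proposition~\ref{BP_comp} first, directly, and then to observe that the analogue of Proposition~\ref{strat_triv} needs nothing beyond it, once the convolution morphisms over $\bbZ$ are available.

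\emph{Analogue of Proposition~\ref{BP_comp}.}
First one records $\calB_\bbZ = \calU_\bbZ \cdot \calT_\bbZ$ with $\calT_\bbZ = L^+T_\bbZ$: the projections $\calB_\bbZ \onto B$ and $\calU_\bbZ \onto U$ have the same kernel, $B = U \rtimes T$, and $T$ lifts to the constant subgroup of $\calT_\bbZ$ (this is also immediate from Proposition~\ref{Iwahori_decomp_Z}). Since the chosen lift of $v$ normalizes $\calT_\bbZ$ and $\calT_\bbZ \subset \calP_\bbZ = L^+\calG_{\bf f}$, the $\calB_\bbZ$-orbit of $v e_{\calP,\bbZ}$ equals $Y_{\calB\calP,\bbZ}(v) = \calU_\bbZ\, v e_{\calP,\bbZ}$. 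By Proposition~\ref{dHL.3.7.4_Z}\eqref{dHL.3.7.4a} one has $\calU_\bbZ = (\calU_\bbZ \cap \,^v\overline{\calU}_{\calP,\bbZ}) \cdot (\calU_\bbZ \cap \,^v\calP_\bbZ)$, and $(\calU_\bbZ \cap \,^v\calP_\bbZ)\,v e_{\calP,\bbZ} = v e_{\calP,\bbZ}$ because $v^{-1}(\calU_\bbZ \cap \,^v\calP_\bbZ) v \subset \calP_\bbZ$; hence $Y_{\calB\calP,\bbZ}(v) = (\calU_\bbZ \cap \,^v\overline{\calU}_{\calP,\bbZ})\,v e_{\calP,\bbZ}$. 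Writing $u = v u' v^{-1}$ with $u' \in L^{--}\calG_{\bf f,\bbZ}$ (possible since $\overline{\calU}_{\calP,\bbZ} = L^{--}\calG_{\bf f,\bbZ}$), the orbit map becomes $u \mapsto v\cdot (u' e_{\calP,\bbZ})$, which is the composite of the open immersion $L^{--}\calG_{\bf f,\bbZ} \hookrightarrow \Flag_{\calP,\bbZ}$ of Lemma~\ref{big_cell_Z} with the automorphism ``left translation by $v$'' of $\Flag_{\calP,\bbZ}$; so it is a locally closed immersion, identifying $Y_{\calB\calP,\bbZ}(v) \cong \calU_\bbZ \cap \,^v\overline{\calU}_{\calP,\bbZ} \cong \prod_a U_{a,\bbZ}$ via Proposition~\ref{dHL.3.7.4_Z}\eqref{dHL.3.7.4b}, an affine space over $\bbZ$. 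Finally, when $v$ is right-${\bf f}$-minimal, Lemma~\ref{root_prod} --- a statement about the apartment and the affine roots alone, hence valid verbatim over $\bbZ$ --- says that $\{a>0 : v^{-1}a \overset{\bf f}{<} 0\} = \{a>0 : v^{-1}a < 0\}$, so running the same chain with $\calP$ replaced by $\calB$ yields $\calU_\bbZ \cap \,^v\overline{\calU}_{\calP,\bbZ} \cong \calU_\bbZ \cap \,^v\overline{\calU}_\bbZ \cong Y_{\calB\calB,\bbZ}(v)$.

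\emph{Analogue of Proposition~\ref{strat_triv}.}
Once the convolution morphism $m = m_{w_\bullet,\calP_\bbZ}\colon X_{\calP,\bbZ}(w_\bullet) \rightarrow X_{\calP,\bbZ}(w_*)$ has been constructed over $\bbZ$ (done below in Section~\ref{Z_sec}), the proof transcribes unchanged: for $Y_{\calB\calP,\bbZ}(v)$ in the image, each of its points is uniquely of the form $u v e_{\calP,\bbZ}$ with $u \in \calU_\bbZ \cap \,^v\overline{\calU}_{\calP,\bbZ}$ by the previous paragraph, and $(\calP_1, \dots, \calP_{r-1}, u v e_{\calP,\bbZ}) \mapsto (u^{-1}\calP_1, \dots, u^{-1}\calP_{r-1}, v e_{\calP,\bbZ}) \times u v e_{\calP,\bbZ}$ defines an isomorphism $m^{-1}(Y_{\calB\calP,\bbZ}(v)) \overset{\lowsim}{\longrightarrow} m^{-1}(v e_{\calP,\bbZ}) \times_\bbZ Y_{\calB\calP,\bbZ}(v)$ over $Y_{\calB\calP,\bbZ}(v)$. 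As in the field case this works at the level of $\bbZ$-schemes, not merely reduced schemes, since the decomposition in Proposition~\ref{dHL.3.7.4_Z}\eqref{dHL.3.7.4a} is schematic; and passing to reduced structures on both sides is harmless because $Y_{\calB\calP,\bbZ}(v)$ is an affine space over $\bbZ$, so the right-hand side is a polynomial ring over a reduced ring, hence reduced, and in particular $\bbZ$-flat.

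\emph{Main obstacle.}
There is essentially no new mathematical difficulty here --- it has been absorbed into Proposition~\ref{dHL.3.7.4_Z} and Lemma~\ref{big_cell_Z}. The points that do require care are organizational: confirming that the base-alcove combinatorics and the notion of right-${\bf f}$-minimality feeding Lemma~\ref{root_prod} are literally unchanged over $\bbZ$ (they depend only on the apartment, which is canonically identified with those over $\bbQ\rpot{t}$ and $\bbF_p\rpot{t}$); respecting the logical order, so that the $\bbZ$-analogue of Proposition~\ref{strat_triv} is invoked only after the convolution morphisms over $\bbZ$ have been built in Section~\ref{Z_sec}; and checking that the \'etale sheafification implicit in $\calB_\bbZ v \calP_\bbZ/\calP_\bbZ$ causes no trouble, which is automatic once this quotient is exhibited as a locally closed subscheme of $\Flag_{\calP,\bbZ}$ as above.
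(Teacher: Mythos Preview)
Your proposal is correct and follows precisely the approach implicit in the paper, which states the corollary without proof: the field-case arguments for Propositions~\ref{BP_comp} and~\ref{strat_triv} carry over verbatim once Proposition~\ref{dHL.3.7.4_Z} and Lemma~\ref{big_cell_Z} are available, and you have spelled out these details (including the purely combinatorial nature of Lemma~\ref{root_prod} and the logical-order caveat about convolution morphisms) with appropriate care. One minor slip: ``hence reduced, and in particular $\bbZ$-flat'' is not a valid implication (\textit{e.g.}\ $\bbF_p$ is reduced but not $\bbZ$-flat), but flatness plays no role in the argument---only the reducedness of a polynomial ring over a reduced ring is needed, and that you correctly establish.
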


\subsubsection{Schubert cells and Schubert schemes over $\boldsymbol{\bbZ}$}

Fix $w \in W$, and fix a lift $\dot{w} \in N_GT(\bbZ\pot{t})$ of $w$.  We usually suppress the dot from now on since no construction depends on this choice.   The group $\calP_\bbZ$ acts on the left on $\Flag_{\calP, \bbZ}$; we define the {\em Schubert scheme} $X_{\calP, \bbZ}(w) \subset \Flag_{\calP, \bbZ}$ to be the scheme-theoretic image of the morphism
$$
\calP_\bbZ \longrightarrow \Flag_{\calP, \bbZ}, \quad p \longmapsto p \dot{w} e_{\calP, \bbZ}.
$$
Similarly, we can define $X_{\calQ \calP, \bbZ}(w)$ for any parahoric subgroup $\calQ_\bbZ$; in particular, we have $X_{\calB \calP, \bbZ}$.  

We define $Y_{\calP, \bbZ}(w) \subset \Flag_{\calP, \bbZ}$ to be the \'etale sheaf-theoretic image of the morphism of sheaves $\calP_{\bbZ} \rightarrow \Flag_{\calP, \bbZ}$, $p \mapsto p \dot{w}e_\calP$, and as before we define similarly $Y_{\calQ \calP, \bbZ}$ for any parahoric subgroup $\calQ_\bbZ \subset LG_\bbZ$. 

\begin{lem} \label{rep_lem_Z}
  Let $\calP_\bbZ \subset LG$ be the parahoric subgroup fixed above \textup{(}similar statements apply to any $\calQ$-orbits in $\Flag_{\calP, \bbZ}$\textup{)}.
\begin{a-enumerate}
\item\label{rlZ-a} The scheme $X_{\calP, \bbZ}(w)$ is an integral scheme which is projective and faithfully flat over $\Spec(\bbZ)$, and $X_{\calP, \bbZ}(w) \otimes \bbQ = X_{\calP, \bbQ}(w)$.
\item\label{rlZ-b} The morphism $Y_{\calP,\bbZ}(w) \rightarrow \Flag_{\calP, \bbZ}$ of \'etale sheaves factors canonically as
  $$Y_{\calP, \bbZ}(w) \longrightarrow X_{\calP, \bbZ}(w) \longrightarrow \Flag_{\calP, \bbZ},$$
  and the first morphism is represented by a quasi-compact open immersion of schemes.
\item\label{rlZ-c} The scheme $Y_{\calP, \bbZ}(w)$ is smooth over $\Spec(\bbZ)$, and its formation commutes with base change along an arbitrary homomorphism $\mathbb Z \rightarrow R$.
\end{a-enumerate}
\end{lem}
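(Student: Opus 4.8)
The three parts are established in order, the later ones relying on the earlier; throughout, the parenthetical variants for $\calQ$-orbits follow by running the same arguments with $\calB_\bbZ$ or $\calQ_\bbZ$ in the role of the acting group.

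\emph{Part (a).} The orbit map $\mu\colon\calP_\bbZ\to\Flag_{\calP,\bbZ}$, $p\mapsto p\dot{w}e_{\calP,\bbZ}$, annihilates a congruence subgroup $\ker(L^+\calG_{\bf f}\to L^m\calG_{\bf f})$ for $m\gg0$ (since $\dot{w}^{-1}\ker(\cdots)\dot{w}\subseteq\calP_\bbZ$ once $m$ is large), so it factors as $\calP_\bbZ\twoheadrightarrow\bar{\calP}:=L^m\calG_{\bf f}\to\Flag^{\leq w}_{\calP,\bbZ}$, the target being a projective $\bbZ$-scheme by the ind-projectivity of $\Flag_{\calP,\bbZ}$ (\cite[Corollary~3.11]{HR20}). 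Now $\bar{\calP}$ is a smooth affine $\bbZ$-group scheme with geometrically connected fibres --- a jet group of the smooth affine fibrewise-connected $\calG_{\bf f}$; cf.\ \cite[Section~4]{RS20} --- hence regular, and being flat over the connected $\Spec\bbZ$ with connected fibres and a section, it is connected, hence integral. Therefore $X_{\calP,\bbZ}(w)$, the scheme-theoretic image of the quasi-compact morphism $\bar{\calP}\to\Flag^{\leq w}_{\calP,\bbZ}$ out of an integral scheme, is an integral closed subscheme of a projective $\bbZ$-scheme, hence integral and projective over $\bbZ$. Its generic fibre contains $\dot{w}e_{\calP,\bbQ}$, so it dominates $\Spec\bbZ$; an integral scheme dominating $\Spec\bbZ$ is $\bbZ$-flat, and being moreover proper with nonempty --- hence, $\Spec\bbZ$ being connected, surjective --- image, it is faithfully flat. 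Finally, scheme-theoretic image commutes with the flat base change $\bbZ\to\bbQ$ and $\bar{\calP}_\bbQ$ is reduced, so $X_{\calP,\bbZ}(w)\otimes\bbQ$ is the reduced Zariski closure of the orbit $\calP_\bbQ\dot{w}e_\calP$, i.e.\ $X_{\calP,\bbQ}(w)$ by definition.

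\emph{Part (b).} That the étale-sheaf image $Y_{\calP,\bbZ}(w)$ factors through the closed subscheme $X_{\calP,\bbZ}(w)$ is immediate, since $\mu$ does. The essential point is then that the stabilizer group scheme $\mathrm{Stab}:=\calP_\bbZ\cap{}^{\dot{w}}\calP_\bbZ$ of $\dot{w}e_{\calP,\bbZ}$ is smooth --- a fortiori flat --- over $\bbZ$: by the Iwahori decomposition of Proposition~\ref{Iwahori_decomp_Z} and the commutator relations it decomposes as a product of the $\bbZ$-smooth affine root subgroups $U_{a,\bbZ}$ with $a\overset{\bf f}{\geq}0$ and $\dot{w}^{-1}a\overset{\bf f}{\geq}0$, the torus $\calT_\bbZ$, and a congruence subgroup (this is the purely group-theoretic replacement for the building-theoretic input used, over a field, in Corollary~\ref{BP_comp_Z}). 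Consequently $\bar{\calP}\to Y_{\calP,\bbZ}(w)$ exhibits $Y_{\calP,\bbZ}(w)$ as the fppf quotient $\bar{\calP}/\overline{\mathrm{Stab}}$, a $\bbZ$-smooth scheme mapping monomorphically to $X_{\calP,\bbZ}(w)$. Using that over every field $k$ the orbit map $\bar{\calP}_k\to X_{\calP,k}(w)$ is faithfully flat onto the open Schubert cell, together with the known fact that the fibres of $X_{\calP,\bbZ}(w)$ are the geometrically reduced Schubert varieties $X_{\calP,k}(w)$, the fibrewise flatness criterion shows $\bar{\calP}\to X_{\calP,\bbZ}(w)$ is flat; hence so is the monomorphism $\bar{\calP}/\overline{\mathrm{Stab}}\to X_{\calP,\bbZ}(w)$ (flatness descends along the faithfully flat quotient $\bar{\calP}\to\bar{\calP}/\overline{\mathrm{Stab}}$), and a flat monomorphism of finite presentation is an open immersion. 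So $Y_{\calP,\bbZ}(w)\to X_{\calP,\bbZ}(w)$ is an open immersion, quasi-compact because $X_{\calP,\bbZ}(w)$ is noetherian.

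\emph{Part (c).} By (b), $Y_{\calP,\bbZ}(w)=\bar{\calP}/\overline{\mathrm{Stab}}$ is an open subscheme of the $\bbZ$-flat scheme $X_{\calP,\bbZ}(w)$, so it is $\bbZ$-flat; since the formation of a quotient by a $\bbZ$-flat group scheme commutes with base change, its fibre over a field $k$ is $\bar{\calP}_k/\overline{\mathrm{Stab}}_k$, the Schubert cell $Y_{\calP,k}(w)$, which is smooth over $k$. A $\bbZ$-flat finite-type scheme with smooth fibres is smooth over $\bbZ$, so $Y_{\calP,\bbZ}(w)$ is smooth over $\bbZ$. Likewise $Y_{\calP,\bbZ}(w)\otimes_\bbZ R=\bar{\calP}_R/\overline{\mathrm{Stab}}_R$ is the étale-sheaf image of the orbit map $\calP_R\to\Flag_{\calP,R}$, that is, $Y_{\calP,R}(w)$, which is the asserted base-change statement.

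\textbf{Expected main obstacle.} Apart from part (a), the argument is a formal descent from the classical situation over fields, conditional on two facts over $\bbZ$: the $\bbZ$-smoothness of the stabilizer $\calP_\bbZ\cap{}^{\dot{w}}\calP_\bbZ$ (equivalently, of the orbit map), and the reducedness and geometric irreducibility of the special fibres of the Schubert schemes $X_{\calP,\bbZ}(v)$. The former is the genuine novelty here: it is precisely the point at which the purely group-theoretic Iwahori decomposition of Proposition~\ref{Iwahori_decomp_Z} must be used in place of the building arguments available over a field, and I expect it to be the crux. The latter I would take from the established theory of affine flag varieties over $\bbZ$ in the references cited above.
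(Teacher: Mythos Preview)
Your arguments for parts (a) and (c) are sound and close in spirit to the paper's, which likewise derives integrality and flatness of $X_{\calP,\bbZ}(w)$ from properties of scheme-theoretic image under flat base change, and deduces smoothness of $Y_{\calP,\bbZ}(w)$ from its being an orbit of a smooth group scheme.

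There is, however, a genuine gap in your part~(b). Your fiberwise flatness argument requires the map $\bar{\calP}_k \to (X_{\calP,\bbZ}(w))_k$ to be flat for every field $k$, and you justify this by asserting that $(X_{\calP,\bbZ}(w))_k$ equals the reduced Schubert variety $X_{\calP,k}(w)$. This is \emph{false} in general: the paper proves later (in the Corollary following the construction of Demazure resolutions over $\bbZ$) that $(X_{\calP,\bbZ}(w))_k$ is reduced if and only if $X_{\calP,k}(w)$ is normal, and normality fails whenever $\mathrm{char}(k)$ divides $|\pi_1(G_{\mathrm{der}})|$ (this is one of the main results of \cite{HLR}, and is discussed in the paper's Errata section). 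When the fiber is non-reduced, the orbit map $\bar{\calP}_k \to (X_{\calP,\bbZ}(w))_k$ factors through the reduced subscheme and is therefore \emph{not} flat at any point where nilpotents are present. So the fiberwise criterion, as you have stated it, does not apply. Your ``Expected main obstacle'' paragraph correctly flags this input as something you are importing, but the literature you cite does not establish it, and indeed it cannot.

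What \emph{is} true is that the fiber is reduced along the open $\calP$-orbit, but proving this without circularity essentially requires the approach the paper takes: one shows directly, via translates of the big cell $L^{--}\calG_{\bf f}\cdot e_{\calP}$ (Lemma~\ref{big_cell_Z}) and the factorization of Proposition~\ref{dHL.3.7.4_Z}, that $Y_{\calP,\bbZ}(w)\to \Flag_{\calP,\bbZ}$ is a locally closed immersion. Once that is known, $Y_{\calP,\bbZ}(w)$ is a reduced, dense, locally closed subscheme of the integral scheme $X_{\calP,\bbZ}(w)$, and such a subscheme is automatically open. This is the content of the paper's reference to \cite[Corollary~3.14]{Ri16b}, and it bypasses any need to know the fibers of $X_{\calP,\bbZ}(w)$ are reduced.
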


\begin{proof}
  The projectivity in \eqref{rlZ-a} is proved in \cite[Definition~4.3.4 and what follows]{RS20}.
  Part~\eqref{rlZ-b}  can be proved by adapting the argument of \cite[Corollary~3.14]{Ri16b}. Part~\eqref{rlZ-c}  holds since $Y_{\calP, \bbZ}(w)$ is the orbit under a smooth group scheme over~$\mathbb Z$. Hence $Y_{\calP, \bbZ}(w) \otimes \bbQ = Y_{\calP, \bbQ}(w)$.

The formation of the scheme-theoretic image of a quasi-compact morphism commutes with flat base change (see \cite[Lemma~29.25.16]{StaPro}). So the generic fiber of $X_{\calP, \bbZ}(w)$ is the schematic-closure of $Y_{\calP, \bbQ}(w)$ in $\Flag_{\calP, \bbQ}$; that is, $X_{\calP, \bbZ}(w) \otimes \bbQ = X_{\calP, \bbQ}(w)$.  Now the flat closure of the latter in $X_{\calP, \bbZ}$ contains the scheme-theoretic closure of $Y_{\calP, \bbZ}(w)$, which is all of $X_{\calP, \bbZ}(w)$.  This shows that the latter is faithfully flat over $\mathbb Z$. Clearly, $X_{\calP, \bbZ}(w)$ is irreducible since it is the scheme-theoretic image of a morphism with irreducible source.  Moreover, $X_{\calP, \bbZ}(w)$ is reduced since is it the flat closure of a $\mathbb Q$-variety.
\end{proof}

\subsubsection{Reduction to neutral element of $\boldsymbol{\Omega}$}

In the theory over fields $k$, it is easy to see that $\tau \in \Omega$ has the property that $\tau$ normalizes the standard Iwahori subgroup $\calB_k \subset LG_k$ corresponding to the base alcove ${\bf a}$.   We need to know that this remains true over $\bbZ$.

\begin{lem} \label{Omega_Z}
If $\tau \in \Omega$, then $^\tau\calB_\bbZ = \calB_\bbZ$ as subgroups of\, $LG_\bbZ$.
\end{lem}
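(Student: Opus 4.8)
The plan is to reduce to the statement over fields by a uniqueness‑of‑flat‑model argument, rather than imitate a building‑theoretic proof. Write $\tau = t^\lambda w$ with $\lambda \in X_*(T)$ and $w \in W_0$, and fix the lift $\dot\tau := \lambda(t)\dot w \in N_G(T)(\bbZ\rpot{t}) \subset G(\bbZ\rpot{t}) = LG_\bbZ(\bbZ)$, where $\dot w \in N_G(T)(\bbZ)$ lifts $w$ (such a lift exists by \cite[Proposition~5.1.6]{Co14}). Conjugation $c_{\dot\tau}$ by $\dot\tau$ is then an automorphism of the ind‑$\bbZ$‑scheme $LG_\bbZ$, so ${}^\tau\calB_\bbZ := c_{\dot\tau}(\calB_\bbZ)$ is, like $\calB_\bbZ$, a closed subgroup scheme of $LG_\bbZ$ that is affine and flat over $\bbZ$: for $\calB_\bbZ = L^+\calG_{\bf a}$ this follows from the smoothness of $\calG_{\bf a}$ over $\bbZ\pot{t}$ (Lemma~\ref{PZ_ext_lem}), since $L^+\calG_{\bf a}$ is $\Spec$ of a filtered colimit, along injective transition maps, of the coordinate rings of the jet groups of $\calG_{\bf a}$, each smooth over $\bbZ$; for ${}^\tau\calB_\bbZ$ it holds because $c_{\dot\tau}$ is an isomorphism. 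Finally, $\dot\tau$ is well defined up to right translation by $\calT_\bbZ = L^+T_\bbZ$, and $\calT_\bbZ$ normalizes $\calB_\bbZ$ (immediately from the Iwahori decomposition~(\ref{Iwahori_decomp_Z_eq}), as $\calT_\bbZ$ preserves each of its three factors $\calB_\bbZ \cap L\overline{U}_\bbZ$, $\calT_\bbZ$, $\calB_\bbZ \cap LU_\bbZ$), so ${}^\tau\calB_\bbZ$ does not depend on the choice of lift.

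Next I would compare the two subschemes on generic fibers. Over the field $\bbQ$ there is no difficulty: since $\tau \in \Omega$ stabilizes the base alcove ${\bf a}$, it normalizes the standard Iwahori $\calB_\bbQ \subset LG_\bbQ$ — the elementary fact recalled just before the statement. Hence
$$
{}^\tau\calB_\bbZ \otimes_\bbZ \bbQ \;=\; {}^\tau(\calB_\bbZ \otimes_\bbZ \bbQ) \;=\; {}^\tau\calB_\bbQ \;=\; \calB_\bbQ \;=\; \calB_\bbZ \otimes_\bbZ \bbQ
$$
as closed subschemes of $LG_\bbQ$. So $\calB_\bbZ$ and ${}^\tau\calB_\bbZ$ are two $\bbZ$‑flat closed subschemes of $LG_\bbZ$ sharing the same generic fiber $V := \calB_\bbQ$. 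Let $\overline{V}$ be the scheme‑theoretic closure of $V$ in $LG_\bbZ$, which is again flat over $\bbZ$. Since both $\calB_\bbZ$ and ${}^\tau\calB_\bbZ$ are closed subschemes of $LG_\bbZ$ containing $V$, the closure $\overline{V}$ is contained in each of them; and a closed immersion $\overline{V} \hookrightarrow \calB_\bbZ$ of $\bbZ$‑flat schemes that is an isomorphism on generic fibers is an isomorphism, because the corresponding ideal is a subsheaf of the $\bbZ$‑torsion‑free structure sheaf that becomes zero after base change to $\bbQ$, hence vanishes. Thus $\calB_\bbZ = \overline{V} = {}^\tau\calB_\bbZ$, which is the assertion.

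The main obstacle is exactly the one the author flags at the start of Section~\ref{Z_sec}: there is no building over $\bbZ\rpot{t}$, so one cannot simply say ``$\tau$ fixes the facet $\tau({\bf a}) = {\bf a}$, hence normalizes the parahoric group scheme attached to ${\bf a}$.'' The substitute used above is the observation that $\calB_\bbZ$ is $\bbZ$‑flat and closed in $LG_\bbZ$, so it is the unique flat model of its generic fiber, and the field case supplies the rest; verifying the flatness and closedness of $\calB_\bbZ$ is the only point requiring care, and it is routine. (Alternatively, one can argue purely over $\bbZ$ by decomposing $\calB_\bbZ$ on $R$‑points into a $t$‑adically convergent product of $\calT_\bbZ(R)$ and the affine root groups $U_{a,\bbZ}(R)$ with $a > 0$, exactly as in the proof of Proposition~\ref{dHL.3.7.4_Z}, and noting that $c_{\dot\tau}$ permutes these root groups and fixes $\calT_\bbZ$ because $\tau$ stabilizes ${\bf a}$; this is more hands‑on but equally valid.)
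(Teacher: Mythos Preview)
Your argument is correct, and it takes a genuinely different route from the paper's one-line proof. The paper invokes the \emph{uniqueness} clause of Lemma~\ref{PZ_ext_lem}: conjugation by $\dot\tau$ transports $\calG_{\bf a,\bbZ}$ to another smooth affine fiberwise connected $\bbZ\pot{t}$-group scheme with the same generic fiber, and since over every residue field $k$ (not just $\bbQ$) one knows ${}^\tau\calB_k = \calB_k$, this transported scheme satisfies all the characterizing properties of $\calG_{\bf a,\bbZ}$ and hence coincides with it. Your approach instead stays at the level of the loop group over $\bbZ$: you use that $\calB_\bbZ$ and ${}^\tau\calB_\bbZ$ are $\bbZ$-flat closed subschemes of $LG_\bbZ$, so each is the flat closure of its generic fiber, and the generic fibers agree by the field case over $\bbQ$ alone. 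This is more elementary---it avoids the Pappas--Zhu machinery and needs the classical result only over a single field---though you should spell out why $\calB_\bbZ$ is closed in $LG_\bbZ$ (it is the preimage of the base point under $LG_\bbZ \to \Flag_{\calB,\bbZ}$, and the latter is ind-separated). The paper's method, by contrast, fits its general philosophy of characterizing integral parahorics by their behavior over all residue fields, and would adapt more readily to situations where flatness over the base is less transparent. Your parenthetical root-group alternative is also valid and is essentially a third proof.
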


\begin{proof}
The identification follows by the uniqueness characterization of the group scheme $\calG_{\bf a, \bbZ}$ in Lemma~\ref{PZ_ext_lem} and the fact that it holds after base change to every field $k$.
\end{proof}

\subsubsection{Twisted products over $\boldsymbol{\bbZ}$}

As above we fix $\calP_\bbZ = L^+\calG_{\bf f, \bbZ}$. Again abbreviate $\calG := \calG_{\bf f, \bbZ}$. Fix $r \in \bbN$, and consider the right action of $\calP_\bbZ^r$ on $LG_\bbZ^r$ given by the same formula as (\ref{r-fold_action}).  

\begin{dfn} \label{twisted_Gr}
We define the $r$-fold twisted product
$$
\widetilde\Gr_{\calG} := LG_\bbZ \times^{\calP_\bbZ} LG_\bbZ \times^{\calP_\bbZ} \cdots \times^{\calP_\bbZ} LG_\bbZ/\calP_\bbZ =: \Gr_{\calG} \tilde{\times} \cdots \tilde{\times} \Gr_{\calG}
$$
to be the \'etale quotient sheaf for the presheaf $(LG_\bbZ)^r/(\calP_\bbZ)^r$ defined above.
\end{dfn}

It is clear from the fact that every $\calG$-bundle over $D_R$ is trivializable over $D_{R'}$ for some \'etale ring extension $R \rightarrow R'$ that we can identify $\widetilde\Gr_{\calG}(R)$ with the set of equivalence classes of tuples
$$
(\calE_\bullet, \alpha_\bullet) = (\calE_1, \dots, \calE_r; \alpha_1, \dots, \alpha_r)
$$
such that each $\calE_i$ is a $\mathcal G_{D_R}$-torsor over $D_R$ and the $\alpha_i \colon \calE_{i-1}|_{D^*_R} \overset{\lowsim}{\rightarrow} \calE_i|_{D^*_R}$ are isomorphisms of $\mathcal G_{D^*_R}$-torsors over $D^*_R$ for all $i = 1, \dots, r$ (with the convention that $\calE_0$ is the trivial torsor).

\begin{lem} \label{conv_repble}
The sheaf $LG_\bbZ \times^{\calP_\bbZ} LG_\bbZ \times^{\calP_\bbZ} \cdots \times^{\calP_\bbZ} LG_\bbZ/\calP_\bbZ$ is represented by an ind-proper ind-scheme which is faithfully flat over $\bbZ$. 
\end{lem}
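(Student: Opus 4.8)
The plan is to prove, by induction on $r$, the stronger assertion that for every $r$-tuple $w_\bullet=(w_1,\dots,w_r)$ of classes in $W_\calP\backslash W/W_\calP$ the bounded twisted product
\[
X_{\calP,\bbZ}(w_\bullet):=\overline{\calP_\bbZ w_1\calP_\bbZ}\,\times^{\calP_\bbZ}\cdots\times^{\calP_\bbZ}\,\overline{\calP_\bbZ w_r\calP_\bbZ}/\calP_\bbZ
\]
is represented by a projective, faithfully flat $\bbZ$-scheme, where $\overline{\calP_\bbZ w_i\calP_\bbZ}\subset LG_\bbZ$ is the preimage of the Schubert scheme $X_{\calP,\bbZ}(w_i)$ under $LG_\bbZ\to\Flag_{\calP,\bbZ}$. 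Granting this, the sheaf $\widetilde\Gr_{\calG}$ of Definition~\ref{twisted_Gr} is the filtered colimit $\varinjlim_{w_\bullet}X_{\calP,\bbZ}(w_\bullet)$ along closed immersions, hence is represented by an ind-proper (in fact ind-projective) ind-scheme over $\bbZ$; since each $X_{\calP,\bbZ}(w_\bullet)$ is surjective onto $\Spec\bbZ$, so is $\widetilde\Gr_{\calG}$, and together with ind-flatness this gives faithful flatness over $\bbZ$. The base case $r=1$ is exactly the statement that $X_{\calP,\bbZ}(w_1)$ is integral, projective and faithfully flat over $\bbZ$, which is Lemma~\ref{rep_lem_Z}\eqref{rlZ-a}; and the $r=1$ instance for $\widetilde\Gr_\calG$ itself, namely $\Gr_\calG=\Flag_{\calP,\bbZ}$, was recalled to be ind-projective and ind-flat over $\bbZ$ before Remark~\ref{Ces_rem} (adapting \cite[Corollary~3.11]{HR20}) and in the proof of Lemma~\ref{new_base_pt}.

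For the inductive step, regroup $X_{\calP,\bbZ}(w_\bullet)=\overline{\calP_\bbZ w_1\calP_\bbZ}\times^{\calP_\bbZ}X_{\calP,\bbZ}(w_2,\dots,w_r)$ and let $q$ be the projection onto the first factor, which takes values in $X_{\calP,\bbZ}(w_1)$. This presents $X_{\calP,\bbZ}(w_\bullet)$ as the fibre bundle over $X_{\calP,\bbZ}(w_1)$ associated with the $\calP_\bbZ$-torsor $\overline{\calP_\bbZ w_1\calP_\bbZ}\to X_{\calP,\bbZ}(w_1)$ and the left $\calP_\bbZ$-action on $X_{\calP,\bbZ}(w_2,\dots,w_r)$, which is a projective faithfully flat $\bbZ$-scheme by the inductive hypothesis. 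By Lemma~\ref{big_cell_Z} this torsor is trivial over each open locus $X_{\calP,\bbZ}(w_1)\cap w\,L^{--}\calG_{\bf f,\bbZ}\,e_{\calP,\bbZ}$ for $w\in W$, and these loci cover $X_{\calP,\bbZ}(w_1)$: this can be checked over the points of $\Spec\bbZ$ via the Birkhoff decomposition over a field (\cite[Lemma~4]{Fal03}, as in Remark~\ref{Ces_rem}), since an open sub-ind-scheme of $\Flag_{\calP,\bbZ}$ whose fibres over all residue fields of $\bbZ$ are empty is itself empty. Over each such open $V$ one has $q^{-1}(V)\cong V\times X_{\calP,\bbZ}(w_2,\dots,w_r)$, which is representable, and gluing these isomorphisms along overlaps produces the scheme $X_{\calP,\bbZ}(w_\bullet)$.

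The step requiring care --- and the main obstacle --- is to make rigorous sense of this associated-bundle construction, since the transition cocycle of the torsor $\overline{\calP_\bbZ w_1\calP_\bbZ}\to X_{\calP,\bbZ}(w_1)$ takes values in the infinite-dimensional group $\calP_\bbZ$. The remedy is that on the bounded scheme $X_{\calP,\bbZ}(w_2,\dots,w_r)$ the $\calP_\bbZ$-action factors through a jet quotient $\calP^{(m)}_\bbZ:=\calP_\bbZ/L^{(m)}\calG_{\bf f,\bbZ}$ by a congruence subgroup, a smooth affine group scheme of finite type over $\bbZ$; pushing out the torsor along $\calP_\bbZ\twoheadrightarrow\calP^{(m)}_\bbZ$ produces the $\calP^{(m)}_\bbZ$-torsor $LG_\bbZ/L^{(m)}\calG_{\bf f,\bbZ}\to\Gr_\calG$, whose restriction over the projective scheme $X_{\calP,\bbZ}(w_1)$ is a smooth affine morphism of finite type, so that the associated bundle is a classical one, representable by descent (the torsor being Zariski-locally trivial as above). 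Once representability is in hand, the remaining properties are formal and local on the base: from $q^{-1}(V)\cong V\times X_{\calP,\bbZ}(w_2,\dots,w_r)$ one reads off that $q$ is proper, flat and surjective --- using that $X_{\calP,\bbZ}(w_2,\dots,w_r)$ is proper, flat and nonempty over $\bbZ$ by induction --- and composing with the projective faithfully flat morphism $X_{\calP,\bbZ}(w_1)\to\Spec\bbZ$ of Lemma~\ref{rep_lem_Z}\eqref{rlZ-a} shows that $X_{\calP,\bbZ}(w_\bullet)$ is proper and faithfully flat over $\bbZ$. (Projectivity, which is stronger than needed here, follows by descending a $\calP^{(m)}_\bbZ$-equivariant relatively ample line bundle from the tautological ample bundle on $\Flag_{\calP,\bbZ}$.) Passing to the colimit over $w_\bullet$ then gives the lemma; alternatively, the representability is a special case of Louren\c{c}o's general construction of twisted affine flag varieties over arbitrary bases, \cite[Section~3]{Lou23}.
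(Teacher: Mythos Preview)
Your proof is correct and reaches the same conclusion, but it takes a genuinely different route from the paper's argument. The paper works directly with the unbounded ind-scheme $\widetilde\Gr_\calG$ and inducts on $r$: projecting onto the first factor $p\colon \widetilde\Gr_\calG \to \Gr_\calG$, it invokes the moduli interpretation (Lemma~\ref{new_base_pt}) together with the induction hypothesis to see that $p$ is representable by an ind-proper ind-flat morphism, then uses that $LG_\bbZ \to \Gr_\calG$ has sections \emph{\'etale}-locally to conclude flatness and properness. By contrast, you induct on the bounded pieces $X_{\calP,\bbZ}(w_\bullet)$, use the finite-type jet quotient $\calP_\bbZ \twoheadrightarrow \calP^{(m)}_\bbZ$ to turn the associated-bundle construction into a classical one, exploit \emph{Zariski}-local triviality via translates of the big cell (Birkhoff over fields), and only at the end pass to the colimit over $w_\bullet$.

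Your approach buys a bit more: you obtain ind-projectivity rather than merely ind-properness, and you essentially prove the subsequent unnamed lemma in the paper (representability and properness of the bounded $X_{\calP,\bbZ}(w_\bullet)$) simultaneously. The paper's approach is more streamlined in that it never has to verify that the $X_{\calP,\bbZ}(w_\bullet)$ exhaust $\widetilde\Gr_\calG$ or that the transition maps are closed immersions, and it avoids the jet-quotient bookkeeping by working with the torsor description directly. A minor point: your argument imports the definition of $X_{\calP,\bbZ}(w_\bullet)$ and $\overline{\calP_\bbZ w\calP_\bbZ}$ which in the paper's ordering come after Lemma~\ref{conv_repble}, but since you give the definitions inline this is harmless.
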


\begin{proof}
We proceed by induction on $r$.  The case $r =1$ is clear: The ind-flatness of $\Gr_{\calG} \rightarrow \Spec(\bbZ)$ is proved by an easy reduction to the case $\calP_\bbZ = L^+G_\bbZ$, which is then handled by \cite[Proposition~8.8]{HLR}.

Now assume $r > 1$ and that the result holds for $r-1$-fold quotients.  The projection onto the first factor gives a morphism
$$
p\colon LG_\bbZ \times^{\calP_\bbZ} LG_\bbZ \times^{\calP_\bbZ} \cdots \times^{\calP_\bbZ} LG_\bbZ/\calP_\bbZ \longrightarrow LG_\bbZ/\calP_\bbZ.
$$
Now the induction hypothesis and the proof of Lemma~\ref{new_base_pt} show that this morphism is representable by an ind-proper ind-flat ind-scheme, and hence the total space is represented by an ind-scheme.

Locally in the \'etale topology on the target, $p$ is locally trivial with flat fiber, hence is flat.  It follows that the source of $p$ is flat over $\bbZ$. 

It remains to prove the source of $p$ is proper over $\bbZ$.  We know that ind-locally in the \'etale topology on the target, $LG \rightarrow \Gr_\calG$ has sections and hence after passing to an \'etale cover $p$ becomes Zariski-locally trivial with fibers which are ind-proper over $\bbZ$,
by using translates of the big cell (see Lemma~\ref{big_cell_Z}). Since properness descends along \'etale covers, we conclude that $p$ is ind-proper, as desired.
\end{proof}

Let $w \in W$. Denoting the quotient morphism by $q\colon LG_\bbZ \rightarrow \Flag_{\calP, \bbZ}$, note that $\calP_\bbZ w \calP_\bbZ = q^{-1}(Y_{\calP, \bbZ}(w))$ (an equality of \'etale subsheaves of $LG_\bbZ$), where by definition $\calP_\bbZ w \calP_\bbZ$ denotes the \'etale sheaf quotient $\calP_\bbZ \times^{w, \calP_\bbZ} \calP_\bbZ$ of $\calP_\bbZ \times \calP_\bbZ$ by the right action of $\calP_\bbZ \cap w \calP_\bbZ w^{-1}$ given by $(p,p')\cdot \delta = (p\delta, w^{-1}\delta^{-1}w p)$.  In this vein, we {\em define} $\widetilde{\calP_\bbZ w \calP_\bbZ} := q^{-1}(X_{\calP, \bbZ}(w))$.

\begin{dfn}  Let $w_\bullet = (w_1, w_2, \dots, w_r) \in W^r$.  We define
\begin{align*}
Y_{\calP, \bbZ}(w_\bullet) &:= \calP_{\bbZ} w_1 \calP_{\bbZ} \times^{\calP_\bbZ} \calP_{\bbZ} w_2 \calP_\bbZ \times^{\calP_\bbZ} \cdots \times^{\calP_\bbZ} \calP_\bbZ w_r \calP_{\bbZ} /\calP_\bbZ  = Y_{\calP, \bbZ}(w_1) \tilde{\times} Y_{\calP, \bbZ}(w_2) \tilde{\times} \cdots \tilde{\times} Y_\calP(w_r), \\
X_{\calP, \bbZ}(w_\bullet) &:= \widetilde{\calP_{\bbZ} w_1 \calP_{\bbZ}} \times^{\calP_\bbZ} \widetilde{\calP_{\bbZ} w_2 \calP_\bbZ} \times^{\calP_\bbZ} \cdots \times^{\calP_\bbZ} \widetilde{\calP_\bbZ w_r \calP_{\bbZ}} /\calP_\bbZ   = X_{\calP, \bbZ}(w_1) \tilde{\times} X_{\calP, \bbZ}(w_2) \tilde{\times} \cdots \tilde{\times} X_\calP(w_r) 
\end{align*}
to be the \'etale quotient sheaves as in Definition~\ref{twisted_Gr}.
\end{dfn}

\begin{lem} The sheaves $X_{\calP,\bbZ}(w_\bullet)$ and $Y_{\calP, \bbZ}(w_\bullet)$ are represented by integral schemes which are of finite type and flat over $\bbZ$.  Moreover, $X_{\calP, \bbZ}(w_\bullet)$ is proper over $\bbZ$.
\end{lem}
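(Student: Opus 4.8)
The plan is to argue by induction on $r$ along the very same lines as Lemma~\ref{conv_repble} and its ingredient Lemma~\ref{new_base_pt}, but now carrying the Schubert truncations through the argument. Concretely, one inducts via the projection onto the first $r-1$ factors,
$$
\pi\colon X_{\calP, \bbZ}(w_\bullet) \longrightarrow X_{\calP, \bbZ}(w_1, \dots, w_{r-1}), \qquad (\calE_1, \dots, \calE_r; \alpha_1, \dots, \alpha_r) \longmapsto (\calE_1, \dots, \calE_{r-1}; \alpha_1, \dots, \alpha_{r-1}),
$$
and likewise via the analogous morphism $\pi^\circ$ for $Y_{\calP, \bbZ}(w_\bullet)$. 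The base case $r = 1$ is Lemma~\ref{rep_lem_Z}: $X_{\calP, \bbZ}(w_1)$ is integral, projective and faithfully flat over $\bbZ$, while $Y_{\calP, \bbZ}(w_1)$ is smooth (hence flat) and of finite type over $\bbZ$, and is integral, being the orbit of a point under the smooth $\bbZ$-group scheme $\calP_\bbZ$ inside the integral scheme $X_{\calP, \bbZ}(w_1)$.

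\emph{Representability, finite type, properness.} Assume $r > 1$ and that the statement holds for $r - 1$. The key point---argued exactly as in the proofs of Lemmas~\ref{new_base_pt} and~\ref{conv_repble}---is that $\pi$ is, étale-locally on its target, a product projection: over an étale cover $R \to R'$ of an affine open of $X_{\calP, \bbZ}(w_1, \dots, w_{r-1})$ trivializing the universal $\calG_{D_R}$-torsor $\calE_{r-1}$, a choice of trivialization identifies the fiber of $\pi$ with $X_{\calP, \bbZ}(w_r) \times_\bbZ \Spec(R')$, using the torsor description of $\widetilde{\calP_\bbZ w_r \calP_\bbZ}$ and the left action of $LG_\bbZ$ on $\Gr_\calG$. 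Since properness, projectivity, flatness and finite type of a morphism all descend along fppf covers of the base, and since $X_{\calP, \bbZ}(w_r) \to \Spec(\bbZ)$ is projective and flat, $\pi$ is representable by a projective flat morphism. Hence $X_{\calP, \bbZ}(w_\bullet)$ is a scheme, flat over $X_{\calP, \bbZ}(w_1, \dots, w_{r-1})$, and composing with the inductive hypothesis it is of finite type, proper, and flat over $\bbZ$. The same reasoning applied to $\pi^\circ$, using that $Y_{\calP, \bbZ}(w_r)$ is smooth and quasi-projective over $\bbZ$ (Lemma~\ref{rep_lem_Z}\eqref{rlZ-b},\eqref{rlZ-c}), shows $Y_{\calP, \bbZ}(w_\bullet)$ is a quasi-projective, flat, finite-type $\bbZ$-scheme; it is moreover an open subscheme of $X_{\calP, \bbZ}(w_\bullet)$, the twisted product of the quasi-compact open immersions $Y_{\calP, \bbZ}(w_i) \hookrightarrow X_{\calP, \bbZ}(w_i)$.

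\emph{Integrality.} By construction the formation of $X_{\calP, \bbZ}(w_\bullet)$ commutes with the flat base change $\bbZ \to \bbQ$, and that of $Y_{\calP, \bbZ}(w_\bullet)$ with arbitrary base change (Lemma~\ref{rep_lem_Z}\eqref{rlZ-c}, applied stagewise); using $X_{\calP, \bbZ}(w_i) \otimes \bbQ = X_{\calP, \bbQ}(w_i)$ stagewise, one gets $X_{\calP, \bbZ}(w_\bullet) \otimes \bbQ = X_{\calP, \bbQ}(w_\bullet)$ and $Y_{\calP, \bbZ}(w_\bullet) \otimes \bbQ = Y_{\calP, \bbQ}(w_\bullet)$, which are integral varieties over $\bbQ$ by the theory over fields recalled in Section~\ref{Review_sec}. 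As in the proof of Lemma~\ref{rep_lem_Z}\eqref{rlZ-a}, a flat finite-type $\bbZ$-scheme whose generic fiber is integral is itself integral: flatness over $\bbZ$ forces all associated points, and hence all minimal points, to lie in the (reduced) generic fiber, so there are no embedded points and the scheme is generically reduced, hence reduced; and its underlying space, being the closure of the irreducible generic fiber, is irreducible. This yields integrality of both $X_{\calP, \bbZ}(w_\bullet)$ and $Y_{\calP, \bbZ}(w_\bullet)$.

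\emph{Main obstacle.} The only substantive point is the étale-local product structure of $\pi$ and $\pi^\circ$, together with the verification that the consecutive relative-position conditions cutting out the twisted products are genuinely closed, resp.\ locally closed, conditions over $\bbZ$; both reduce, over an étale trivializing cover, to the corresponding facts for a single Schubert scheme in $\Flag_{\calP, \bbZ}$ already supplied by Lemma~\ref{rep_lem_Z}. This is routine but has to be set up with the same care as in the proofs of Lemmas~\ref{new_base_pt} and~\ref{conv_repble}.
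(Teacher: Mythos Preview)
Your proposal is correct and follows essentially the same approach as the paper, which simply says ``The proof goes by induction on $r$, in the same manner as the proof of Lemma~\ref{conv_repble}.'' The only cosmetic difference is that you project onto the first $r-1$ factors with fiber $X_{\calP,\bbZ}(w_r)$, whereas the paper's Lemma~\ref{conv_repble} projects onto the first factor with fiber the $(r-1)$-fold twisted product; both inductive schemes are equivalent, and your added detail on integrality (associated points lie in the generic fiber by flatness, hence reduced and irreducible) is a valid elaboration of what the paper leaves implicit.
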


\begin{proof}
The proof goes by induction on $r$, in the same manner as the proof of Lemma~\ref{conv_repble}.
\end{proof}

\subsubsection{Demazure morphisms and closure relations over $\boldsymbol{\bbZ}$}

We need to construct the Demazure resolutions over $\mathbb Z$.  This is stated without proof in \cite{Fal03} and is  implicit in some literature (\textit{e.g.} \cite{PR08, RS20}), but we think some extra discussion is needed.  

For $s \in S_{\aff}$, let $\calG_{s, \bbZ} : = \calG_{{\bf f}, \bbZ}$, where ${\bf f}$ is the facet fixed by $s$. Let $\calP_{s, \bbZ} = L^+\calG_{s, \bbZ}$.  We have $\calP_{s, \bbZ} = \calB_{\bbZ} \cup \calB_{\bbZ} s \calB_\bbZ$ as schemes (to show this we use Lemma~\ref{rep_lem_Z}\eqref{rlZ-c} and the fact that the inclusion $\calB_{\bbZ}(R) \cup \calB_{\bbZ} s \calB_\bbZ(R) \hookrightarrow \calP_{s,\bbZ}(R)$ is surjective when $R$ is any field, but we warn that this equality fails for general $R$, in particular for $R = \bbZ$).  We have an identification $\bbP^1_\bbZ = \calP_{s, \bbZ}/\calB_\bbZ$.  Furthermore, the foregoing shows we have an open immersion $\mathbb A^1_\bbZ = \calB_\bbZ s \calB_\bbZ/\calB_\bbZ \hookrightarrow \bbP^1_\bbZ$ with closed complement $\mathbb A^0_\bbZ = \calB_\bbZ/\calB_{\bbZ} \hookrightarrow \bbP^1_{\bbZ}$. The BN-pair relations hold.

\begin{lem} \label{BN_pair_lem_Z}
For any $w \in W$ and $s \in S_{\aff}$, we have equalities of sub-ind-schemes in $LG_\bbZ$
$$
\calB_\bbZ w \calB_\bbZ s \calB_\bbZ = \begin{cases} \calB_\bbZ ws \calB_\bbZ&\text{if }w < ws,  \\
\calB_\bbZ w \calB_\bbZ \cup \calB_\bbZ ws \calB_\bbZ & \text{if }ws < w. \end{cases} 
$$
\end{lem}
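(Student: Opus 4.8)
\emph{Strategy.} The plan is to reduce the assertion, fiber by fiber over $\Spec\bbZ$, to the classical BN‑pair relations over a field, and then to spread out using the $\bbZ$‑flatness of Schubert cells and of the convolution morphisms established above. Since $q\colon LG_\bbZ\to\Flag_{\calB,\bbZ}$ is faithfully flat and every sub‑ind‑scheme occurring in the statement is right‑$\calB_\bbZ$‑invariant, it is equivalent to prove the corresponding equality inside $\Flag_{\calB,\bbZ}$. Using $\calP_{s,\bbZ}/\calB_\bbZ=\mathbb P^1_\bbZ$ with open cell $\mathbb A^1_\bbZ=\calB_\bbZ s\calB_\bbZ/\calB_\bbZ$, the image of $\calB_\bbZ w\calB_\bbZ s\calB_\bbZ$ in $\Flag_{\calB,\bbZ}$ is exactly the image of the open convolution cell
$$
\calZ^\circ:=Y_{\calB,\bbZ}(w)\,\widetilde{\times}\,\mathbb A^1_\bbZ\ \subset\ X_{\calB,\bbZ}(w)\,\widetilde{\times}\,\mathbb P^1_\bbZ
$$
under the convolution morphism $m=m_{(w,s),\calB_\bbZ}\colon X_{\calB,\bbZ}(w)\,\widetilde{\times}\,\mathbb P^1_\bbZ\to X_{\calB,\bbZ}(w*s)$ constructed above (using $X_{\calB,\bbZ}(s)=\mathbb P^1_\bbZ$); here $\calZ^\circ$ is smooth over $\bbZ$ of relative dimension $\ell(w)+1$, being an $\mathbb A^1_\bbZ$‑bundle over $Y_{\calB,\bbZ}(w)$, and $w*s=ws$ in case $w<ws$ while $w*s=w$ in case $ws<w$. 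A recurring tool is the fibral criterion: a closed subscheme of a $\bbZ$‑flat, finitely presented scheme whose fiber over every point of $\Spec\bbZ$ is empty is itself empty; a morphism of such schemes which on every fiber is an isomorphism (resp. is flat) is an isomorphism (resp. flat); and $X_{\calB,\bbZ}(v)\subseteq X_{\calB,\bbZ}(w)$ for $v\le w$ because the generic point of the integral scheme $X_{\calB,\bbZ}(v)$ lands in the closed subscheme $X_{\calB,\bbZ}(w)$, by the classical theory over $\bbQ$ together with Lemma~\ref{rep_lem_Z}.

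\emph{Case $w<ws$.} First, $m|_{\calZ^\circ}$ factors through $X_{\calB,\bbZ}(ws)$ (its image is contained in the closure of the image of the generic fiber, which lies in the closed subscheme $X_{\calB,\bbQ}(ws)\subseteq X_{\calB,\bbZ}(ws)$), and even through the open $Y_{\calB,\bbZ}(ws)$, since this already holds on the generic fiber and the formation of $Y_{\calB,\bbZ}(ws)$ commutes with base change by Lemma~\ref{rep_lem_Z}\eqref{rlZ-b}\eqref{rlZ-c} and $\calZ^\circ$ is $\bbZ$‑flat integral with dense generic fiber. The resulting morphism $\calZ^\circ\to Y_{\calB,\bbZ}(ws)$ is a map of $\bbZ$‑smooth schemes of the same relative dimension $\ell(ws)$ which on every fiber over $\Spec\bbZ$ is the classical isomorphism $Y_{\calB,k}(w)\,\widetilde{\times}\,\mathbb A^1_k\overset{\sim}{\to}Y_{\calB,k}(ws)$; by the fibral criterion it is an isomorphism, so its image is $Y_{\calB,\bbZ}(ws)$, and pulling back along $q$ gives $\calB_\bbZ w\calB_\bbZ s\calB_\bbZ=\calB_\bbZ ws\calB_\bbZ$.

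\emph{Case $ws<w$.} Since $s$ is simple, $ws<w$ forces the covering relation $ws\lessdot w$, so no $v$ satisfies $ws<v<w$. Hence
$$
U:=X_{\calB,\bbZ}(w)\setminus\bigcup_{v<w,\ v\neq ws}X_{\calB,\bbZ}(v)
$$
is an open subscheme of $X_{\calB,\bbZ}(w)$ whose fiber over each $\Spec k$ equals $Y_{\calB,k}(w)\cup Y_{\calB,k}(ws)$ by the classical closure relations and $ws\lessdot w$; applying the fibral emptiness criterion to the closed loci $Y_{\calB,\bbZ}(w)\cap X_{\calB,\bbZ}(v)$ and $Y_{\calB,\bbZ}(ws)\cap X_{\calB,\bbZ}(v)$ shows $Y_{\calB,\bbZ}(w)\cup Y_{\calB,\bbZ}(ws)=U$. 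Now $m|_{\calZ^\circ}$ factors through $X_{\calB,\bbZ}(w*s)=X_{\calB,\bbZ}(w)$ as above, and the preimage of the closed complement $X_{\calB,\bbZ}(w)\setminus U$ is a closed subscheme of $\calZ^\circ$ empty on every fiber (over a field the classical BN‑pair relation places the image of $\calZ^\circ_k$ inside $U_k$), hence empty; so $m|_{\calZ^\circ}$ factors through $U$. For the reverse inclusion, over every field the image of $\calZ^\circ_k$ is all of $U_k$, so the image contains $U_k$ for all $k$ and therefore equals $U$ as a set — equivalently, the two subfibrations of $\calZ^\circ$ lying over $Y_{\calB,\bbZ}(w)$ and over $Y_{\calB,\bbZ}(ws)$ are smooth and surjective (fibral flatness criterion plus the fiberwise computations of Lemma~\ref{strict-BN_pair_lem}\eqref{sBNpl-2}\eqref{sBNpl-3}), hence admit étale‑local sections. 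Thus the image of $m|_{\calZ^\circ}$ is $U=Y_{\calB,\bbZ}(w)\cup Y_{\calB,\bbZ}(ws)$, and pulling back along $q$ completes this case.

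\emph{Main obstacle.} The crux is that for general $R$ one has $\calP_{s,\bbZ}(R)\neq\calB_\bbZ(R)\cup\calB_\bbZ s\calB_\bbZ(R)$, so one cannot argue by naive double‑coset multiplication as over a field: one must pass through the $\mathbb P^1_\bbZ$‑bundle $\calP_{s,\bbZ}/\calB_\bbZ$ and the convolution morphism, and then control its image over $\bbZ$ purely by $\bbZ$‑flatness and fibral criteria, since no Bruhat--Tits building is available. The most delicate bookkeeping point, treated above by reducing to the generic fiber and to the closed Schubert subschemes $X_{\calB,\bbZ}(v)$, is verifying that the convolution morphism lands in the correct \emph{locally closed} Schubert stratum over $\bbZ$ rather than merely over each field.
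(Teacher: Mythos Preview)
Your approach differs substantially from the paper's. The paper argues by induction on $\ell(w)$: the first case is reduced to fields via Lemma~\ref{rep_lem_Z}\eqref{rlZ-c}; for the second case one writes $w=(ws)s$ and uses the first case to reduce to $w=s$, where the identity $\calB_\bbZ s\calB_\bbZ s\calB_\bbZ=\calP_{s,\bbZ}=\calB_\bbZ\cup\calB_\bbZ s\calB_\bbZ$ follows because $\calP_{s,\bbZ}$ is a group subscheme. Your first case is essentially the same as the paper's (just made more explicit), but your second case takes a different route through convolution morphisms and fibral criteria.

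There is, however, a genuine circularity in your second case. When you assert that the fiber $U_k$ equals $Y_{\calB,k}(w)\cup Y_{\calB,k}(ws)$ ``by the classical closure relations,'' you are implicitly using that $(X_{\calB,\bbZ}(v))_k=X_{\calB,k}(v)$ set-theoretically for every $v\le w$. In the paper's logical order this is established only \emph{after} the present lemma, via the Demazure resolution and the closure relations~(\ref{cl_rel_Z}), both of which rest on the BN-pair relations you are proving. The same issue undermines your fibral-emptiness argument for $Y_{\calB,\bbZ}(ws)\cap X_{\calB,\bbZ}(v)$: when $\ell(v)=\ell(ws)$ (i.e.\ $v$ is another coatom under $w$), dimension alone does not exclude $(X_{\calB,\bbZ}(v))_{\bbF_p}$ acquiring an extra irreducible component containing $Y_{\calB,\bbF_p}(ws)$.

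The gap is fixable: once your first case is established for all pairs $(w',s')$ with $w'<w's'$, you can build the Demazure morphism $D(s_\bullet)_\bbZ\to X_{\calB,\bbZ}(v)$ as a surjection (iterating the first case over a reduced word for $v$), and since surjections are stable under base change and $D(s_\bullet)_\bbZ\otimes k=D(s_\bullet)_k$ surjects onto $X_{\calB,k}(v)$, you obtain the needed set-theoretic fiber identification before treating the second case. But this detour must be made explicit. The paper's reduction to $w=s$ via the group structure of $\calP_{s,\bbZ}$ sidesteps the issue entirely and is much shorter.
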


\begin{proof} 
Both cases are proved by induction on $\ell(w)$.  The first case follows from the case of fields and Lemma~\ref{rep_lem_Z}\eqref{rlZ-c}.  For the second case, it is enough to prove the result for $w =s$.  But $\calB_\bbZ s \calB_\bbZ s \calB_\bbZ =  \calB_\bbZ s \calB_\bbZ \cup \calB_\bbZ = \calP_{s, \bbZ}$ follows because $\calP_{s, \bbZ}$ is a group subscheme of $LG_\bbZ$ and $s\calB_\bbZ s \not\subset  \calB_\bbZ$.
\end{proof}

Let $w = s_1 \cdots s_r$ be a reduced word in $W$. Consider the Demazure morphism given by projecting to the final coordinate: 
$$
m_{s_\bullet, \bbZ}\colon D(s_\bullet)_\bbZ := \calP_{s_1, \bbZ} \times^{\calB_\bbZ} \calP_{s_2, \bbZ} \times^{\calB_\bbZ} \cdots \times^{\calB_\bbZ} \calP_{s_r, \bbZ}/ \calB_\bbZ \longrightarrow X_{\calB, \bbZ}(w).
$$
The image lies in $X_{\calB, \bbZ}(w)$ by flatness and properness, and by the fact that this holds over $\bbQ$.  By the BN-pair relations, it gives an isomorphism over $Y_{\calB, \bbZ}(w)$.  Furthermore, it implies the closure relations
\begin{equation} \label{cl_rel_Z}
X_{\calP, \bbZ}(w) = \bigsqcup_{v} Y_{\calP, \bbZ}(v), 
\end{equation}
where $v  \in W_\calP \backslash W/W_\calP$ is such that $v \leq w$ in the Bruhat order on $W_\calP \backslash W/W_\calP$.  In particular, we see that $\widetilde{\calP_\bbZ w \calP_\bbZ} = \bigsqcup_v \calP_\bbZ v \calP_\bbZ$. Here and in (\ref{cl_rel_Z}) the union indicates a union of locally closed subschemes, and every subscheme appearing is reduced by construction.

With the existence of Demazure resolutions over $\bbZ$ in hand, one can prove the following result by copying the argument of \cite[Proposition~3.4]{HLR} (Demazure resolutions over $\bbZ$ are used to prove that Schubert varieties attached to simply connected groups over a field $k$ are normal, following the argument in \cite[Section~9]{PR08}).

\begin{cor}
For any field $k$, $(X_{\calP, \bbZ}(w) \otimes_\bbZ k)_{\red} = X_{\calP, k}(w)$. Further, $X_{\calP, \bbZ}(w) \otimes_{\bbZ} k$ is reduced if and only if $X_{\calP, k}(w)$ is normal.
\end{cor}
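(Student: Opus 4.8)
The plan is to copy the proof of \cite[Proposition~3.4]{HLR}, the one new ingredient being the Demazure resolution over $\bbZ$ constructed above.

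\emph{First equality.} I would reduce to $\calP=\calB$: since $\pi\colon\Flag_{\calB,\bbZ}\to\Flag_{\calP,\bbZ}$ is proper, carries $X_{\calB,\bbZ}({}^{\bf f}w^{\bf f})$ onto $X_{\calP,\bbZ}(w)$, and surjectivity of morphisms is stable under base change, the general case follows from the case $\calP=\calB$ by taking images under $\pi$ with the reduced structure (over a field, $\pi$ maps $X_{\calB,k}({}^{\bf f}w^{\bf f})$ onto $X_{\calP,k}(w)$). For $\calP=\calB$, fix a reduced word for ${}^{\bf f}w^{\bf f}$ and the Demazure morphism $m_{s_\bullet,\bbZ}\colon D(s_\bullet)_\bbZ\to X_{\calB,\bbZ}(w)$ constructed above: the source is an iterated $\bbP^1_\bbZ$-bundle, hence smooth and proper over $\bbZ$ with formation commuting with base change, and $m_{s_\bullet,\bbZ}$ is surjective and an isomorphism over the dense open cell. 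Base changing to $k$, the surjection $D(s_\bullet)_k\to X_{\calB,\bbZ}(w)\otimes_\bbZ k$ has smooth --- in particular reduced --- source, so its scheme-theoretic image in $\Flag_{\calB,k}$ is a reduced closed subscheme supported on all of $X_{\calB,\bbZ}(w)\otimes_\bbZ k$, i.e.\ it equals $(X_{\calB,\bbZ}(w)\otimes_\bbZ k)_{\red}$. Over the field $k$ this scheme-theoretic image is exactly $X_{\calB,k}(w)$, by the construction of Schubert schemes; the first equality follows.

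\emph{Setting up the equivalence.} For the second assertion I fix a prime $p$ and work over $\bbZ_{(p)}$, taking $k=\bbF_p$ (the statement over a field depends only on the characteristic, normality being insensitive to purely inseparable extensions). Working locally on the target, let $A$ denote the ring of functions on $X:=X_{\calP,\bbZ}(w)\otimes_\bbZ\bbZ_{(p)}$ and $B:=m_*\calO_D$, where $m\colon D\to X$ is the above Demazure morphism, post-composed with $\pi$ when $\calP\neq\calB$; it is proper, surjective, an isomorphism over the open cell, and its smooth source $D$ has cohomologically trivial fibres over $X$. Hence $R^{>0}m_*\calO_D=0$ (classical for Demazure morphisms, the usual induction along the $\bbP^1$-fibrations being unaffected by the base), so $B$ commutes with base change; $B$ is $\bbZ_{(p)}$-torsion-free since $D$ is $\bbZ_{(p)}$-flat; $A$ is $\bbZ_{(p)}$-flat with $A\otimes\bbQ=\calO_{X_{\calP,\bbQ}(w)}$ by Lemma~\ref{rep_lem_Z}; and $A\hookrightarrow B$, since $X$ is integral and $m$ dominant. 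Using normality of Schubert varieties in affine flag varieties in characteristic $0$, the morphism $m\otimes\bbQ$ is a resolution of a normal variety, so $B\otimes\bbQ=\calO_{X_{\calP,\bbQ}(w)}=A\otimes\bbQ$; thus the finitely generated $\bbZ_{(p)}$-module $B/A$ is torsion.

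\emph{Carrying out the equivalence.} By the first equality $(A\otimes k)_{\red}=\calO_{X_{\calP,k}(w)}$, and $B\otimes k=m_{k,*}\calO_{D_k}$ is reduced, being the direct image of the reduced scheme $D_k$. An elementary computation with the finitely generated torsion $\bbZ_{(p)}$-module $B/A$ --- via the $\on{Tor}$-sequence of $0\to A\to B\to B/A\to 0$ tensored with $k$, together with the $p$-torsion-freeness of $A$ and $B$ --- identifies the kernel of $A\otimes k\to B\otimes k$ with the image of $(B/A)[p]$ and with the nilradical of $A\otimes k$. Hence $X_{\calP,\bbZ}(w)\otimes_\bbZ k$ is reduced if and only if $(B/A)[p]=0$, if and only if $(B/A)_{(p)}=0$, if and only if $A\otimes k\to B\otimes k$ is surjective, if and only if $m_{k,*}\calO_{D_k}=\calO_{X_{\calP,k}(w)}$. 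Finally, $m_{k,*}\calO_{D_k}=\calO_{X_{\calP,k}(w)}$ is equivalent to normality of $X_{\calP,k}(w)$: if it holds then, $D_k$ being normal and $m_k$ dominant, the map factors through the normalization $X_{\calP,k}(w)^{\nu}\to X_{\calP,k}(w)$, forcing this finite birational morphism to be an isomorphism; conversely, if $X_{\calP,k}(w)$ is normal then $m_{k,*}\calO_{D_k}=\calO_{X_{\calP,k}(w)}$ by Zariski's main theorem, as $m_k$ is proper and birational.

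\emph{Where the difficulty lies.} Once the Demazure resolution over $\bbZ$ is in hand, everything above is formal except for two imported inputs: the vanishing $R^{>0}m_*\calO_D=0$, which is classical and whose standard proof is unaffected by the base, and the normality of affine Schubert varieties in characteristic $0$. The remaining effort is bookkeeping --- the behaviour of scheme-theoretic images under the non-flat base change $\bbZ\to k$ in the first part, and the module computation with $B/A$ in the second --- handled exactly as in \cite[Proposition~3.4]{HLR}, following \cite[Section~9]{PR08}.
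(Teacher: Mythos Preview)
Your approach is exactly what the paper indicates: copy \cite[Proposition~3.4]{HLR} using the Demazure resolution over $\bbZ$ constructed just above, together with the normality input from \cite[Section~9]{PR08}. The overall structure and the key inputs you identify---the vanishing $R^{>0}m_*\calO_D=0$, normality of Schubert varieties in characteristic $0$, and the $\on{Tor}$-sequence analysis of the finite torsion module $B/A$---match the intended argument.

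One small imprecision is worth flagging. When $\calP\neq\calB$, the composite $m\colon D\to X_{\calB,\bbZ}({}^{\bf f}w^{\bf f})\overset{\pi}{\to} X_{\calP,\bbZ}(w)$ is \emph{not} birational: the second arrow is a $\calP/\calB$-fibration, so $\dim D=\ell({}^{\bf f}w^{\bf f})>\dim X_{\calP}(w)$. Hence your appeal to Zariski's main theorem (``$m_k$ proper and birational'') does not apply as stated, and the claim that $m$ is ``an isomorphism over the open cell'' is likewise false for $\calP\neq\calB$. The clean fix is to reduce the second assertion to $\calP=\calB$ just as you did for the first: since $\pi^{-1}(X_{\calP,\bbZ}(w))=X_{\calB,\bbZ}({}^{\bf f}w^{\bf f})$ and $\pi$ is smooth with geometrically integral fibers $\calP/\calB$, reducedness of $X_{\calP,\bbZ}(w)\otimes k$ is equivalent to that of $X_{\calB,\bbZ}({}^{\bf f}w^{\bf f})\otimes k$, and normality of $X_{\calP,k}(w)$ is equivalent to that of $X_{\calB,k}({}^{\bf f}w^{\bf f})$. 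After this reduction your argument goes through verbatim, since the Demazure map to $X_{\calB}(w)$ \emph{is} birational.
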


\subsubsection{Convolution morphisms over $\boldsymbol{\bbZ}$}

Given the BN-pair relations involving subschemes of $LG_\bbZ$, we have the following.

\begin{lem} \label{conv_mor_Z}
For any $w_\bullet = (w_1, w_2, \dots, w_r) \in W^r$ with Demazure product $$w_* := \,^{\bf f}w^{\bf f}_1 * \,^{\bf f}w^{\bf f}_2 * \cdots *\,^{\bf f}w^{\bf f}_r,$$ 
we have the convolution morphisms and uncompactified convolution morphisms over $\bbZ$:
\begin{align*}
m_{w_\bullet, \calP_\bbZ}\colon X_{\calP, \bbZ}(w_\bullet) &\longrightarrow X_{\calP, \bbZ}(w_*),  \\
p_{w_\bullet, \calP_\bbZ}\colon Y_{\calP, \bbZ}(w_\bullet) &\longrightarrow X_{\calP, \bbZ}(w_*).
\end{align*}
\end{lem}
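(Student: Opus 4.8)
The plan is to descend the $r$-fold multiplication morphism on the loop group to the twisted products constructed above, and then to identify the image of the resulting map inside $\Flag_{\calP,\bbZ}$ with $X_{\calP,\bbZ}(w_*)$ by a flatness argument, reducing the ``image'' part to the classical situation over a field.

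First I would note that the multiplication $\mathrm{mult}\colon(LG_\bbZ)^r\to LG_\bbZ$, $(g_1,\dots,g_r)\mapsto g_1g_2\cdots g_r$, composed with the quotient map $q\colon LG_\bbZ\to\Flag_{\calP,\bbZ}$, is invariant for the right $\calP_\bbZ^r$-action \eqref{r-fold_action}: indeed $(g_1p_1)(p_1^{-1}g_2p_2)\cdots(p_{r-1}^{-1}g_rp_r)=g_1g_2\cdots g_rp_r$, which has the same image in $\Flag_{\calP,\bbZ}$ as $g_1\cdots g_r$. Restricting $\mathrm{mult}$ to $\widetilde{\calP_\bbZ w_1\calP_\bbZ}\times\cdots\times\widetilde{\calP_\bbZ w_r\calP_\bbZ}$ (resp.\ to $\calP_\bbZ w_1\calP_\bbZ\times\cdots\times\calP_\bbZ w_r\calP_\bbZ$), this invariant composite factors, by the universal property of the \'etale sheaf quotient, through a morphism of \'etale sheaves $X_{\calP,\bbZ}(w_\bullet)\to\Flag_{\calP,\bbZ}$ (resp.\ $Y_{\calP,\bbZ}(w_\bullet)\to\Flag_{\calP,\bbZ}$). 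Since source and target are representable (by the (ind-)schemes supplied in Lemma~\ref{conv_repble} and the lemma following it), these are morphisms of (ind-)schemes; the first one is moreover proper because $X_{\calP,\bbZ}(w_\bullet)$ is proper over $\bbZ$.

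Next I would prove that the scheme-theoretic image $Z\subseteq\Flag_{\calP,\bbZ}$ of $m\colon X_{\calP,\bbZ}(w_\bullet)\to\Flag_{\calP,\bbZ}$ is exactly $X_{\calP,\bbZ}(w_*)$. Since $X_{\calP,\bbZ}(w_\bullet)$ is integral and $m$ is quasi-compact, $Z$ is an integral closed subscheme. Formation of the scheme-theoretic image commutes with the flat base change $\bbZ\to\bbQ$ (\cite[Lemma~29.25.16]{StaPro}), and over $\bbQ$ the map $m\otimes\bbQ$ is the classical convolution morphism of \cite[Section~4]{dHL18}, which surjects onto the reduced scheme $X_{\calP,\bbQ}(w_*)$; hence $Z\otimes\bbQ=X_{\calP,\bbQ}(w_*)$. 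Being integral with nonempty generic fibre, $Z$ is torsion-free, hence flat, over $\bbZ$, so $Z$ is the flat closure of $X_{\calP,\bbQ}(w_*)$ in $\Flag_{\calP,\bbZ}$. But by Lemma~\ref{rep_lem_Z}\eqref{rlZ-a} the closed subscheme $X_{\calP,\bbZ}(w_*)$ is likewise integral and flat over $\bbZ$ with generic fibre $X_{\calP,\bbQ}(w_*)$, hence it too is this flat closure; therefore $Z=X_{\calP,\bbZ}(w_*)$, and $m$ factors through $X_{\calP,\bbZ}(w_*)$, which defines $m_{w_\bullet,\calP_\bbZ}$. Finally, $Y_{\calP,\bbZ}(w_\bullet)\hookrightarrow X_{\calP,\bbZ}(w_\bullet)$ is an open immersion---obtained by forming twisted products of the quasi-compact open immersions $\calP_\bbZ w_i\calP_\bbZ\hookrightarrow\widetilde{\calP_\bbZ w_i\calP_\bbZ}$ of Lemma~\ref{rep_lem_Z}\eqref{rlZ-b}---so $p_{w_\bullet,\calP_\bbZ}$ is obtained by restricting $m_{w_\bullet,\calP_\bbZ}$ along it.

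I expect the main obstacle to be not any single deep step but the careful bookkeeping of the descent: one must verify that the $\calP_\bbZ^r$-invariant composite genuinely descends to the \'etale quotient sheaves defining $X_{\calP,\bbZ}(w_\bullet)$ and $Y_{\calP,\bbZ}(w_\bullet)$, that these are exactly the representing (ind-)schemes of the preceding lemmas, and---most importantly---that the scheme-theoretic image is pinned down correctly via the flat-closure characterization together with the known result over $\bbQ$. Given the machinery already assembled (Lemmas~\ref{conv_repble} and~\ref{rep_lem_Z}, the closure relations \eqref{cl_rel_Z}, and the field-level results of \cite{dHL18}), the remainder is formal.
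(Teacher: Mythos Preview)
Your argument is correct, but it takes a somewhat different route from the paper's. The paper does not give an explicit proof of this lemma; it merely prefaces the statement with ``Given the BN-pair relations involving subschemes of $LG_\bbZ$, we have the following.'' The intended argument is therefore direct and combinatorial: from Lemma~\ref{BN_pair_lem_Z} one deduces by induction on length that $\calB_\bbZ w_1\calB_\bbZ\cdot\calB_\bbZ w_2\calB_\bbZ\subseteq\bigsqcup_{v\leq w_1*w_2}\calB_\bbZ v\calB_\bbZ$ as sub-ind-schemes of $LG_\bbZ$, and then one combines this with the closure relations \eqref{cl_rel_Z} (and the passage to general $\calP$ via the $^{\bf f}w^{\bf f}$ representatives) to see that multiplication sends $\widetilde{\calP_\bbZ w_1\calP_\bbZ}\times\cdots\times\widetilde{\calP_\bbZ w_r\calP_\bbZ}$ into $\widetilde{\calP_\bbZ w_*\calP_\bbZ}$ already at the level of $\bbZ$-schemes, without ever passing to the generic fibre.

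Your approach instead bypasses the BN-pair combinatorics entirely: you produce the morphism to $\Flag_{\calP,\bbZ}$ by sheaf-theoretic descent, then pin down its scheme-theoretic image using flatness and the known result over $\bbQ$. This is a perfectly valid and arguably cleaner argument---it leverages the machinery of Lemma~\ref{rep_lem_Z}\eqref{rlZ-a} (integrality and flatness of Schubert schemes, and identification of the generic fibre) in exactly the way it was set up to be used, and it avoids having to spell out the parahoric version of the BN-pair inclusions. The paper's route, on the other hand, is more self-contained over $\bbZ$ and does not rely on already knowing the result over a field; it also yields the slightly stronger set-theoretic information that the image is a union of specific $\calB_\bbZ$-orbits, which is closer in spirit to how the closure relations \eqref{cl_rel_Z} were established.
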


\begin{lem} \label{key_fiber_lem_Z}
The analogues of Lemma~\ref{key_fiber_lem} and Corollary~\ref{MN_cor} hold over $\mathbb Z$.
\end{lem}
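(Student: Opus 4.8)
The plan is to prove the two assertions separately, the second building on the first.

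The analogue over $\bbZ$ of Lemma~\ref{key_fiber_lem} is obtained by transcribing the proof given over a field. For $w_\bullet = (w_1,w_2)$, both $Y_{\calP,\bbZ}(w_1) \cap v\,Y_{\calP,\bbZ}(w_2^{-1})$ and $p_{w_\bullet,\calP_\bbZ}^{-1}(v\,e_{\calP,\bbZ})$ are schemes of finite type over $\bbZ$ — the first as a locally closed subscheme of $\Flag_{\calP,\bbZ}$ via Lemma~\ref{rep_lem_Z}, the second as a fiber of the morphism of $\bbZ$-schemes furnished by Lemma~\ref{conv_mor_Z} — and each represents the \'etale sheafification of an evident presheaf built from $LG_\bbZ$ and $\calP_\bbZ$. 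Since \'etale sheafification commutes with finite limits, it suffices to identify the two presheaves, and for a $\bbZ$-algebra $R$ a section of the presheaf fiber over $v\,e_{\calP,\bbZ}$ is a tuple $(\calP_R, g\calP_R, v\calP_R)$ with $g \in \calP_R w_1 \calP_R$ and $g^{-1}v \in \calP_R w_2 \calP_R$, which says precisely that $g\calP_R \in Y_{\calP,\bbZ}(w_1)(R) \cap v\,Y_{\calP,\bbZ}(w_2^{-1})(R)$. As only an isomorphism of (possibly non-reduced) schemes is asserted, no reducedness issue arises. I expect this step to be entirely routine.

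For the analogue over $\bbZ$ of Corollary~\ref{MN_cor}, fix $\mu \in X_*(T)^+$ and $\lambda \in X_*(T)^{+_M}$. We may assume $L^+M_\bbZ\,LN_\bbZ\,x_\lambda \cap L^+G_\bbZ\,x_\mu \neq \emptyset$ (otherwise there is nothing to prove); then a point of this scheme lies over a point of $\Spec\bbZ$, so the corresponding intersection over some residue field is nonempty, whence $\lambda \in \Omega(\mu)$ by \cite[Lemma~7.2(b)]{HKM}, and we fix $\nu \in X_*(T)$ with $\nu \geq^P \mu$ (these being combinatorial conditions, independent of the base). The crucial input is the $\bbZ$-analogue of the equality (\ref{HKM_eq}),
\[
\left(t^{-\nu}L^+G_\bbZ\,t^\nu\right)x_\lambda \,\cap\, L^+G_\bbZ\,x_\mu \;=\; L^+M_\bbZ\,LN_\bbZ\,x_\lambda \,\cap\, L^+G_\bbZ\,x_\mu
\]
of reduced locally closed subschemes of $\Gr_{G,\bbZ}$. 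I would prove this by reduction to fields. Both sides are reduced subschemes of finite type over $\bbZ$ contained in the proper $\bbZ$-scheme $X_{L^+G,\bbZ}(t_\mu)$; the orbits $L^+G_\bbZ\,x_\mu = Y_{L^+G,\bbZ}(t_\mu)$, the translated Schubert cell $\left(t^{-\nu}L^+G_\bbZ\,t^\nu\right)x_\lambda$, and $L^+M_\bbZ\,LN_\bbZ$ all have formation commuting with base change along $\bbZ \to k$ (by Lemma~\ref{rep_lem_Z}\eqref{rlZ-c} and the evident base change behaviour of loop groups), and scheme-theoretic intersection commutes with base change; hence the fiber over each point of $\Spec\bbZ$ of either side has the same underlying space as the corresponding intersection over the residue field, and those coincide by the Proposition preceding Corollary~\ref{MN_cor}. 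Since two reduced locally closed subschemes of a locally Noetherian scheme with the same underlying set are equal, the displayed equality follows.

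Granting this, the already-proved first assertion of the lemma — applied with $v = t^{-\nu}$ and $w_\bullet = (t_\mu, t_{-\nu-\lambda})$ — identifies $\left(t^{-\nu}L^+G_\bbZ\,t^\nu\right)x_\lambda \cap L^+G_\bbZ\,x_\mu$ with the reduced fiber $p_{w_\bullet,L^+G_\bbZ}^{-1}\!\left(t^{-\nu}e_{L^+G,\bbZ}\right)$, exactly as over a field. By the paving theorem over $\bbZ$ (the uncompactified analogue of Theorem~\ref{thmA}, established in this section — cf.\ Theorem~\ref{thmD} — whose proof adapts the argument over fields using Corollary~\ref{BP_comp_Z}, the BN-pair relations of Lemma~\ref{BN_pair_lem_Z}, and the first assertion of the present lemma, and which does not invoke Corollary~\ref{MN_cor}), this reduced fiber admits a cellular paving over $\bbZ$; combined with the displayed equality, so does $L^+M_\bbZ\,LN_\bbZ\,x_\lambda \cap L^+G_\bbZ\,x_\mu$, and specializing $P = B$ gives the statement for the Mirkovic--Vilonen varieties. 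The main obstacle is the $\bbZ$-version of (\ref{HKM_eq}): one must verify carefully that the subschemes in play genuinely have the expected fibers over $\Spec\bbZ$, so that the purely topological comparison of fibers is legitimate; should that route run into trouble, the fallback is to transcribe the root-group computations of \cite[Section~7]{HKM} over $\bbZ$, using Propositions~\ref{Iwahori_decomp_Z} and~\ref{dHL.3.7.4_Z} and the big-cell Lemma~\ref{big_cell_Z} in place of their building-theoretic counterparts.
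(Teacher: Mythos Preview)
Your argument is correct. For the first assertion (the $\bbZ$-analogue of Lemma~\ref{key_fiber_lem}) you do exactly what the paper does: the presheaf argument over a field carries over verbatim.

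For the second assertion your route diverges from the paper's. The paper's two-line proof says that ``the proof of Corollary~\ref{MN_cor} also carries over'' once one \emph{interprets retractions group-theoretically} (the hint being Remarks~\ref{A1_rem} and~\ref{Gm_rem}); in other words, the paper has in mind transcribing the root-group computations behind \cite[Proposition~7.1, Lemma~7.3]{HKM} directly over $\bbZ$---precisely what you list as your fallback. Your primary approach instead establishes the $\bbZ$-version of (\ref{HKM_eq}) by reduction to fields: both sides are reduced subschemes whose fibers over every point of $\Spec\bbZ$ agree by the field case, hence they coincide. This is a legitimate alternative and has the advantage of treating \cite{HKM} as a black box; the cost is that you must be careful that the relevant orbits (in particular $L^+M_\bbZ\,LN_\bbZ\,x_\lambda$, which is not literally a Schubert cell) are honestly represented by locally closed subschemes over~$\bbZ$ whose formation is compatible with base change, so that the fiberwise comparison is meaningful. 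You flag this yourself, and once it is checked the argument goes through. The paper's approach sidesteps this bookkeeping at the price of revisiting the \cite{HKM} arguments, and fits the section's theme of making everything work intrinsically over~$\bbZ$.
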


\begin{proof}
The proof of Lemma~\ref{key_fiber_lem} carries over. Then using this, and interpreting retractions group-theoretically (see Remarks~\ref{A1_rem} and~\ref{Gm_rem}), we see that the proof of Corollary~\ref{MN_cor} also carries over.
\end{proof}

\subsection{Main results over $\boldsymbol{\mathbb Z}$}

The following theorem gives the $\bbZ$-versions of Corollaries~\ref{corB} and~\ref{MN_cor}.

\begin{thm} \label{CvdHS_gen} In the notation above, for any $v \in W$, the reduced fiber $m_{w_\bullet, \calP_\bbZ}^{-1}(v\,e_{\calP_\bbZ})$ has a cellular paving over $\bbZ$; that is, it is paved by finite products of $\bbA^1_\bbZ$ and $\bbA^1_\bbZ - \bbA^0_\bbZ$. Further, for every standard parabolic subgroup $P_\bbZ = M_\bbZ N_\bbZ \subset G_\bbZ$ and every pair of cocharacters $(\mu, \lambda) \in X_*(T)^+ \times X_*(T)^{+_M}$, the reduced intersection $L^+M_\bbZ\,LN_\bbZ\,x_\lambda \cap L^+G_\bbZ\,x_\mu$ in $\Gr_{G_\bbZ}$ has a cellular paving over $\bbZ$.
\end{thm}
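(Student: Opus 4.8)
The plan is to transcribe, essentially step for step, the over-a-field arguments of Sections~\ref{P=B_sec}--\ref{KU_GrG_sec} into the $\bbZ$-setting, using the machinery assembled in Section~\ref{Z_sec}. Those arguments rest on three pillars: the Iwahori factorization of Proposition~\ref{dHL.3.7.4}, available over $\bbZ$ as Proposition~\ref{dHL.3.7.4_Z}; the stratified triviality of $m_{w_\bullet}$ over $\calB$-orbits (Propositions~\ref{BP_comp} and~\ref{strat_triv}), available over $\bbZ$ as Corollary~\ref{BP_comp_Z}; and the BN-pair/retraction combinatorics of Lemmas~\ref{BN_pair_lem} and~\ref{strict-BN_pair_lem}, whose $\bbZ$-substitute will be the combination of the BN-pair relations over $\bbZ$ (Lemma~\ref{BN_pair_lem_Z}) with the identification $\calP_{s,\bbZ}/\calB_\bbZ \cong \bbP^1_\bbZ$ and its open/closed decomposition $\bbA^1_\bbZ = \calB_\bbZ s\calB_\bbZ/\calB_\bbZ \hookrightarrow \bbP^1_\bbZ \hookleftarrow \bbA^0_\bbZ = \calB_\bbZ/\calB_\bbZ$ recorded in Section~\ref{Z_sec}.

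Concretely, I would argue as follows. As in Section~\ref{corB_sec}, the closure relations (\ref{cl_rel_Z}) give a decomposition of $X_{\calP,\bbZ}(w_\bullet)$ into locally closed pieces $Y_{\calP,\bbZ}(v_\bullet)$, so the reduced fiber $m_{w_\bullet,\calP_\bbZ}^{-1}(v\,e_{\calP,\bbZ})$ decomposes into the reduced fibers of the uncompactified morphisms $p_{v_\bullet,\calP_\bbZ}$, and it suffices to cellular-pave the latter; this reduces the first assertion to the $\bbZ$-analogue of Theorem~\ref{thmA}. For the latter I would first pass from a general parahoric $\calP_\bbZ$ to the Iwahori $\calB_\bbZ$ via the $\bbZ$-analogues of Lemma~\ref{saving_lem} and Lemma~\ref{key_fiber_lem}: the analogue of Lemma~\ref{saving_lem} needs only that $\Flag_{\calB,\bbZ}\to\Flag_{\calP,\bbZ}$ is ind-proper, that a proper monomorphism is a closed immersion over any base, and Corollary~\ref{BP_comp_Z}; the analogue of Lemma~\ref{key_fiber_lem} is contained in Lemma~\ref{key_fiber_lem_Z}. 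Next, choosing reduced words $w_i = \tau_i s_{i1}\cdots s_{in_i}$ with $\tau_i\in\Omega$ and using Lemma~\ref{Omega_Z} to absorb the length-zero elements $\tau_i$, the $\bbZ$-analogue of Lemma~\ref{unwind} — supplied over $\bbZ$ by the Demazure morphisms $m_{s_\bullet,\bbZ}$ and the BN-pair relations of Lemma~\ref{BN_pair_lem_Z} — identifies a reduced fiber of $p_{w_\bullet,\calB_\bbZ}$ with a reduced fiber of some $p_{s_{\bullet\bullet},\calB_\bbZ}$, where $s_{\bullet\bullet}$ is a tuple of simple affine reflections.

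It then remains to run the induction on $r$ of Section~\ref{ThmA_spec_case_subsec}. Projecting $p_{s_{\bullet\bullet},\calB_\bbZ}^{-1}(v\,e_{\calB,\bbZ})$ onto its $(r-1)$-st twisted factor gives the morphism $\xi^\circ$ with image $\Im(p')\cap vY_{\calB,\bbZ}(s_r^{-1})$; one decomposes $\Im(\xi^\circ)$ into the locally closed pieces $\Im(\xi^\circ)\cap Y_{\calB,\bbZ}(v)$ and $\Im(\xi^\circ)\cap Y_{\calB,\bbZ}(vs_r)$, notes that $p'$ — hence $\xi^\circ$ — is trivial over each $\calB_\bbZ$-orbit in its image by the $\bbZ$-analogue of Proposition~\ref{strat_triv} in Corollary~\ref{BP_comp_Z}, and concludes from the inductive hypothesis for $p'$ together with Lemma~\ref{triviality_lem}. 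Here, just as perfectness was irrelevant over a field, one uses that $\bbA^1_\bbZ$, $\bbA^1_\bbZ-\bbA^0_\bbZ$ and $\bbA^0_\bbZ$ are smooth over $\bbZ$, so that the scheme-theoretic product over $\bbZ$ of a reduced $\bbZ$-scheme with any of them is again reduced: the trivialization thus matches the reduced fiber of $\xi^\circ$ over a stratum with the product of the reduced fiber of $p'$ over a point with that stratum, and a cellular paving of the latter base-changes to one of the former. Finally, the second assertion follows from the first exactly as Corollary~\ref{MN_cor} follows from Theorem~\ref{thmA}: by the $\bbZ$-analogue of the equality $(t^{-\nu}Kt^\nu)x_\lambda\cap Kx_\mu = L^+M\,LN\,x_\lambda\cap L^+G\,x_\mu$ and of the identification with $p_{w_\bullet,L^+G_\bbZ}^{-1}(t^{-\nu}e_{L^+G_\bbZ})$ for $w_\bullet=(t_\mu,t_{-\nu-\lambda})$ — both contained in Lemma~\ref{key_fiber_lem_Z} — the reduced intersection $L^+M_\bbZ\,LN_\bbZ\,x_\lambda\cap L^+G_\bbZ\,x_\mu$ is itself a reduced fiber of a convolution morphism over $\bbZ$, and hence inherits a cellular paving.

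The step I expect to be the main obstacle is the one item above with no field-theoretic shortcut: identifying, as a reduced $\bbZ$-scheme, each stratum $\Im(\xi^\circ)\cap Y_{\calB,\bbZ}(v)$ or $\Im(\xi^\circ)\cap Y_{\calB,\bbZ}(vs_r)$ with $\emptyset$, $\bbA^0_\bbZ$, $\bbA^1_\bbZ$, or $\bbA^1_\bbZ-\bbA^0_\bbZ$ — that is, establishing the $\bbZ$-analogues of Lemmas~\ref{BN_pair_lem} and~\ref{strict-BN_pair_lem}. Over a field these were immediate from properties of the retraction of the Bruhat--Tits building onto an apartment, but over $\bbZ$ there is no building. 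The replacement I would use is purely group-theoretic, as foreshadowed in Remarks~\ref{A1_rem} and~\ref{Gm_rem}: the ambient object $\{\calB'\mid v\calB_\bbZ\overset{\leq s_r}{\longdash}\calB'\}$ is a $\bbP^1_\bbZ$, and by Proposition~\ref{dHL.3.7.4_Z} each point in its $\bbA^1_\bbZ$-part is written uniquely as $v u s_r\calB_\bbZ$ with $u$ ranging over a single affine root group $U_{a,\bbZ}\cong\bbA^1_\bbZ$; whether such a point lies in $Y_{\calB,\bbZ}(v)$ or in $Y_{\calB,\bbZ}(vs_r)$, and which of $v,vs_r$ can actually occur, is then dictated by the BN-pair relations over $\bbZ$ (Lemma~\ref{BN_pair_lem_Z}), the open/closed decomposition of $\calP_{s_r,\bbZ}/\calB_\bbZ$, and the sign of $\ell(vs_r)-\ell(v)$ — reproducing the case analysis and the tables of Sections~\ref{thmC_proof} and~\ref{ThmA_spec_case_subsec}. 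Once that is in place, the bookkeeping with reduced structures is routine, the $\bbZ$-smoothness of $\bbA^1_\bbZ$ and $\bbA^1_\bbZ-\bbA^0_\bbZ$ playing the role of the perfectness-free fiber-product argument used over non-perfect fields.
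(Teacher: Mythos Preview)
Your proposal is correct and follows essentially the same approach as the paper: the paper's own proof simply asserts that the field arguments of Sections~\ref{P=B_sec}--\ref{KU_GrG_sec} carry over verbatim using Corollary~\ref{BP_comp_Z}, Lemma~\ref{Omega_Z}, Lemma~\ref{BN_pair_lem_Z}, Equation~(\ref{cl_rel_Z}), and Lemma~\ref{key_fiber_lem_Z}, and that the retraction arguments are replaced by the purely group-theoretic ones of Remarks~\ref{A1_rem} and~\ref{Gm_rem}. You have identified exactly these ingredients and spelled out how they are used, including the point the paper leaves implicit (the $\bbZ$-analogue of Lemma~\ref{saving_lem} and the building-free substitute for Lemmas~\ref{BN_pair_lem} and~\ref{strict-BN_pair_lem}).
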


Note that the second statement gives an alternate proof of a recent result of Cass--van den Hove--Scholbach, namely \cite[Theorem~1.2]{CvdHS+}.

\begin{proof}
The proofs of the results over fields can be directly imported to the context over $\bbZ$, using in particular Corollary~\ref{BP_comp_Z}, Lemma~\ref{Omega_Z}, Lemma~\ref{BN_pair_lem_Z}, Equation (\ref{cl_rel_Z}), and Lemma~\ref{key_fiber_lem_Z}.  With these tools in hand, the proof over fields works over $\bbZ$ with no changes.  Note that we do not really need the language of retractions at any point in the proof: Every fact justified using retractions is equivalent to a purely group-theoretic statement.  See for example Remarks~\ref{A1_rem} and~\ref{Gm_rem}.
\end{proof}

\section{Errata for \texorpdfstring{\cite{dHL18}}{[dCHL18]}} \label{Errata}

We take this opportunity to point out a few minor mistakes in \cite{dHL18}.  In \cite[Proposition~3.10.2]{dHL18}, we stated that all Schubert varieties  $X_\calP(w)$ in partial affine flag varieties $\Flag_\calP$ are normal.  This is true for classical Schubert varieties (those contained in $G/P$ for a parabolic subgroup $P$ in $G$) but is false in general.  Pappas and Rapoport proved in \cite{PR08} that normality does hold for all affine Schubert varieties attached to $G$ over a field $k$, as long as the characteristic of $k$ is coprime to the order of the Borovoi fundamental group $\pi_1(G_{\drv})$ (see \cite{Bo98}). However, when $\cha(k)$ divides $|\pi_1(G_{\drv})|$, it is proved in \cite[Theorem~2.5]{HLR} that {\em most} Schubert varieties in $\Flag_\calP$ are {\em not normal}.

The normality of Schubert varieties is invoked in \cite[Corollary~4.1.4]{dHL18} to prove that the convolution space $X_\calP(w_\bullet)$ is normal. This also fails in general but is true when $\cha(k) \nmid |\pi_1(G_{\drv})|$.  In addition, normality of Schubert varieties is used in one of the proofs in \cite{dHL18} that the fibers of convolution morphisms $X_\calP(w_\bullet) \rightarrow X_\calP(w_*)$ are geometrically connected.  More precisely, a normality hypothesis plays a role in \cite[Proposition~4.4.4]{dHL18}, which in turn is used to prove the geometric connectedness of the fibers in \cite[Corollary~4.4.5]{dHL18}.  This proof is not valid in general but is valid, again, under the hypothesis $\cha(k) \nmid |\pi_1(G_{\drv})|$.  Fortunately, in \cite[Theorem~2.2.2]{dHL18}, another proof of the geometric connectedness of the fibers is given, which does not rely on any normality of Schubert varieties. 

Furthermore, the polynomials $F_{p,v}(q)$ appearing in \cite[Equation~(2.1)]{dHL18} were defined incorrectly as the Poincar\'{e} polynomials of the fibers $p^{-1}(v\calB)$.  They are rather the functions $$F_{p,v}(q) = \tr\big({\rm Frob}_q ~,~ \sum_i (-1)^i \, {\rm H}^i(p^{-1}(v\calB) \,,\, {\mathcal IC}_{X_{\calB}(w_\bullet)})\big).$$  
The fact that $F_{p,v}(q) \in \mathbb Z_{\geq 0}[q]$ is not \textit{a priori} obvious, but it follows from \cite[Equation~(2.1)]{dHL18}.


\end{document}